\numberwithin{equation}{section}
\newtheorem{definition}{Definition}[section]
\newtheorem{theorem}{Theorem}[section]
\newtheorem{lemma}{Lemma}[section]
\newtheorem{remark}{Remark}[section]
\newtheorem{proposition}{Proposition}[section]
\newtheorem{corollary}{Corollary}[section]
\newcommand{\ba}{\begin{aligned}}
\newcommand{\ea}{\end{aligned}}
\newcommand{\be}{\begin{equation}}
\newcommand{\ee}{\end{equation}}
\newcommand{\bnn}{\begin{eqnarray*}}
\newcommand{\enn}{\end{eqnarray*}}
\newcommand{\thatsall}{\hfill$\Box$}
\date{}
\title{\bf\boldmath{Axisymmetric weak solutions to stationary compressible Navier-Stokes equations with critical indices}}
\author{Xinyu F{\small AN}$^{a}$, Song J{\small IANG}$^{b}$
\footnote{\emph{E-mail addresses\,: fanxinyu17@mails.ucas.ac.cn (X. Y. Fan), jiang@iapcm.ac.cn (S. Jiang)} }
\\  {\normalsize a.  Hua Loo-Keng Center for Mathematical Sciences, AMSS,}\\
{\normalsize Chinese Academy of Sciences, Beijing 100190, P. R. China;}\\
{\normalsize b. Institute of Applied Physics and Computational Mathematics}\\
{\normalsize Beijing 100088, PR China}}
\begin{document}
\maketitle
\begin{abstract}
This paper studies the isothermal stationary compressible Navier-Stokes equations on $\mathbb{R}^2\times\mathbb{T}$ and the cylinder $\mathbb{D}\times\mathbb{T}$ with $\mathbb{D}$ the 2-dimensional unit disc. There are two critical exponents in such settings: The heat ratio $\gamma=1$ is an end point of the classical theory on weak solutions; the axisymmetric solutions in the unbounded domain $\mathbb{R}^2\times\mathbb{T}$ involve the critical index of Sobolev's inequality. Some new observations based on the cancellation structure of the equations are collected to get over these obstacles due to critical exponents.

\textbf{Keywords:} Axisymmetry; Stationary compressible Navier-Stokes equations; Isothermal system; Compactness on unbounded domains;
Cancellation conditions
\end{abstract}
\section{Introduction and main results} 
\quad We consider the isentropic stationary compressible Navier-Stokes equations in $\mathbb{R}^3$ and the cylinder $\mathbb{D}\times\mathbb{R}$ with $\mathbb{D}$ the 2-dimensional unit disc:
\begin{equation}\label{C101}
\begin{cases}
\mathrm{div}(\rho u)=0,\\
\mathrm{div}(\rho u\otimes u)-\mu\Delta u-\lambda\nabla\mathrm{div}u+\nabla P=g,
\end{cases}
\end{equation}
where $\rho\geq 0$ is the density and $u=(u_1,u_2,u_3)$ is the velocity field. The pressures $P$ and viscosity coefficients $\mu$ and $\lambda$ satisfy:
\begin{equation}\label{C103}
P=a\rho^\gamma,~~\mu>0,~~\lambda\geq\mu/3,
\end{equation}
for some positive constant $a>0$ and the specific heat ratio $\gamma\geq 1$. The function $g=(g_1,g_2,g_3)$ is the external force imposed on the system.
This paper mainly investigates the isothermal case, in which we take
$$\gamma=1.$$ 
Without loss of generality, we set
\begin{equation}\label{C104}
\mu=a=1,~~ \lambda=0.
\end{equation}
The method below is also available for the general case \eqref{C103}.  

We will construct axisymmetric weak solutions to \eqref{C101}, which means that the velocity field $u$ and the external force $g$ consist of the radial components $u_1, g_1,$ and the axial components $u_2, g_2$, in addition, all functions involved are axisymmetric as well. Thus, according to \eqref{C104}, it is convenient to transform \eqref{C101} into
\begin{equation}\label{C102}
\begin{cases}
\frac{1}{x_1}\partial_{1}(x_1\rho u_1)+\partial_{2}(\rho u_2)=0,\\
\frac{1}{x_1}\partial_1(x_1\rho u_1^2)+\partial_{2}(\rho u_1u_2)-\frac{1}{x_1}\partial_{1}(x_1\partial_{1}u_1)-\partial_{22}u_1+\frac{1}{x_1^2}\,u_1+\partial_{1}\rho=g_1,\\
\frac{1}{x_1}\partial_1(x_1\rho u_1u_2)+\partial_2(\rho u_2^2)-\frac{1}{x_1}\partial_1(x_1\partial_1u_2)-\partial_{22}u_2+\partial_2\rho=g_2,
\end{cases}
\end{equation}
where $(x_1,x_2)$ are cylindrical coordinates denoting the radial and axial directions.
Moreover, we suppose that the flow is periodic in the $x_2$ direction, thus the domains under considerations are actually  tubes $\Omega=(0,\infty)\times\mathbb{T}$ and $\Omega=(0,1)\times\mathbb{T}$, where $\mathbb{T}=\mathbb{R}/\mathbb{Z}$ is the 1-dimensional torus. 

Let us clarify the boundary conditions and physical restrictions of the systems. For the unbounded domain $(0,\infty)\times\mathbb{T}$, we impose that
\begin{equation}\label{C105}
\begin{split}
&u_1\big|_{\{x_1=0\}}=\partial_1u_2\big|_{\{x_1=0\}}=0,\\
&u\rightarrow 0,~~\rho\rightarrow\rho_\infty\ \mbox{as $x_1\rightarrow\infty$,}
\end{split}
\end{equation}
for some constant $\rho_\infty>0$ describing the far field behaviour of the density. While for the bounded domain $(0,1)\times\mathbb{T}$, we suppose that
\begin{equation}\label{CC105}
\begin{split}
&\quad u_1\big|_{\{x_1=0\}}=\partial_1u_2\big|_{\{x_1=0\}}=0,~~u\big|_{\{x_1=1\}}=0,\\
&\quad~~\int_{(0,1)\times\mathbb{T}}\rho\, x_1dx_1dx_2=\int_{\mathbb{D}\times\mathbb{T}}\rho\,dx=M,
\end{split}
\end{equation} 
for some constant $M>0$ which gives the total mass of the system.

In conclusion, the central topic of the paper is  the system \eqref{C102} in $(0,\infty)\times\mathbb{T}$ with the condition \eqref{C105}   and the system \eqref{C102} in $(0,1)\times\mathbb{T}$ with the condition \eqref{CC105}. Meanwhile, we will also study them via \eqref{C101} by going back to $\mathbb{R}^2\times\mathbb{T}$ and $\mathbb{D}\times\mathbb{T}$.

There is a huge number of literature on the compressible Navier-Stokes equations, while in the last decades, the study on the well-posedness of the system is fruitful. The major break through in the framework of weak solutions is due to Lions \cite{L2}, where he obtained the global existence of weak solutions to the compressible Navier-Stokes equations, merely assuming that the initial energy is finite. For technical reasons, the method in \cite{L2} requires that the heat ratio $\gamma\geq 9/5$ for the 3 dimensional case, later Feireisl-Novotn\'{y}-Petzeltov\'{a} \cite{FNP} extended such index to the critical case $\gamma\geq 3/2$ by introducing a proper truncation. For the radial symmetric and axial symmetric initial data, Jiang-Zhang \cite{JZ01,JZ03} established the 3D global  weak solutions to the compressible system under the condition of $\gamma>1$. Recently, Bresch-Jabin \cite{BJ} studied the compressible system with anisotropic viscous stress tensor, while Hu \cite{Hu} made detailed analysis on the Hausdorff dimension of the concentration set of the 3D compressible system with $\gamma\in(6/5,3/2]$. We mention that the uniqueness and regularity of the Lions-Feireisl weak solutions still remain completely open.

A closely related problem is the existence of weak solutions to the corresponding stationary system \eqref{C101}, where the time derivatives are not involved. Lions \cite[Chapter 6]{L2} extended his result to the stationary system, under the condition of $\gamma\geq 5/3$ in dimension 3, while for the potential external force, Novo-Novotn\'{y} \cite{NN} improved the index to $\gamma\geq 3/2$. After that,  Frehse-Goj-Steinhauer \cite{FGS} and Plotnikov-Sokolowsk \cite{PS05,PSJF} independently established the $L^\infty$-estimates of $\Delta^{-1}P$ which improve the a priori estimates on $\rho$. Then Brezina-Novotn\'{y} \cite{BN} combined these results with the usual energy estimates, and obtained the existence of weak solutions with periodic boundary conditions for any $\gamma>(3+\sqrt{41})/8$. More recently, Jiang-Zhou \cite{JZ} constructed weak solutions to the 3D stationary system with $\gamma>1$ under periodic boundary conditions, by establishing a new coupled estimates in the Morrey space of both the kinetic energy and pressure. The result in \cite{JZ} was extended to the slip boundary conditions by Jessl\'{e}-Novotn\'{y} \cite{JN} and to the Dirichlet boundary conditions by Plotnikov-Weigant \cite{PWJMPA}.

The above arguments mainly deal with the case $\gamma>1$ where the pressure term gains some convex properties. However, the results for the critical case $\gamma=1$ are few, since the integrability of the density $\rho$ is much weaker. Lions \cite[Chapter 6]{L2} applied the concentration compactness assertions to establish the weak solutions to the 2D stationary isothermal system with damping terms. Then, Plotnikov-Sokolowski \cite{PS05} systematically studied the 3D stationary system with or without the damping structure and carefully calculated the Hausdorff dimension of the concentration set of the isothermal case. Later, Frehse-Steinhauer-Weigant \cite{FSW10} obtained the weak solution to the 2D isothermal system on the cube domain with slip-type boundary conditions by considering the stream function induced by \eqref{C101}$_1$. Recently, Plotnikov-Weigant \cite{PW} also constructed weak solutions to the 2D non-stationary isothermal  system by making good use of the Radon transformations.

However, the Lions-Feireisl weak solutions to the stationary system \eqref{C101} established up to now all concern with 2D bounded domains, leaving  the 3D problems open. The aim of this paper is to construct the axisymmetric weak solutions to the 3D stationary systems \eqref{C101} in bounded and unbounded domains.

Let us first clarify the notations through out the paper. We introduce $\Omega= (0,\infty)\times\mathbb{T}$ or $ (0,1)\times\mathbb{T}$, and denote $\hat\Omega=\mathbb{R}^2\times\mathbb{T}$ or $\mathbb{D}\times\mathbb{T}$ as the corresponding 3-dimensional domain. 
Note that a function $f$ defined on $\Omega$ induces an axisymmetric function $\hat{f}$ on $\hat\Omega$, and the integration over $\Omega$ can be transformed into $\hat\Omega$ by 
\begin{equation*} 
\int_{\Omega}f\,x_1dx_1dx_2=\frac{1}{2\pi}\int_{\hat\Omega} \hat{f}\,d\hat{x},
\end{equation*}
where $x_1$ and $x_2$ denote the radial and axial directions of $\mathbb{R}^3$ respectively.
In addition, suppose that $k\in\mathbb{N}$ and $1\leq p\leq\infty$, then the usual Lebesgue space and Sobolev space over the 2 or 3 dimensional domain $A$ are given by $L^p(A)$ and $W^{k,p}(A)$, while $H^k(A)\triangleq W^{k,2}(A)$. For convenient, we define the space $\mathcal{L}^p(\Omega)$ by
$$\mathcal{L}^p(\Omega)\triangleq\left\{f\in L_{loc}^1(\Omega)\,\big|\,\int_{\Omega}|f|^px_1dx_1dx_2<\infty\right\}.$$
Note that $\mathcal{L}^p(\Omega)$ stands for axisymmetric functions in $L^p(\hat\Omega)$. Moreover we introduce the notation
$$\int_A\int_B f(x_1,x_2)\,dx_1dx_2\triangleq \int_A\big(\int_Bf(x_1,x_2)\,dx_2\big)\,dx_1.$$
Next, we give the precise definition of weak solutions in our arguments.
\begin{definition} 
We call $(\rho\geq 0, u_1,u_2)$ is a weak solution to the system \eqref{C102} in the domain $\Omega=(0,\infty)\times\mathbb{T}$ or $(0,1)\times\mathbb{T}$, provided that 

i) Regularity conditions hold: for $i=1,2,$
\begin{equation}\label{C106}
\begin{split}
\rho\in \mathcal{L}^1_{loc}(\Omega),~~
\rho|u_i|^2\in \mathcal{L}^1_{loc}(\Omega),~~  \nabla u_i\in \mathcal{L}^2(\Omega).\\
\end{split}
\end{equation}

ii) The system \eqref{C102} is solved by $(\rho,u_1,u_2)$ in the sense of distributions: $\forall\phi\in C_0^\infty(\Omega)$, 
\begin{equation}\label{C108}
\int_{\Omega}\rho u\cdot\nabla\phi\,x_1dx_1dx_2=0;
\end{equation}
while $\forall\varphi\in \big(C_0^\infty(\Omega)\big)^2$ satisfying $\varphi_1\big|_{\{x_1=0\}}=\partial_1\varphi_2\big|_{\{x_1=0\}}=0$,  
\begin{equation}\label{C109}
\begin{split}
&\int_{\Omega}\big(\rho u_iu_j\cdot\partial_i\varphi_j+\rho\,\big(\frac{\varphi_1}{x_1}+\mathrm{div}\varphi\big)+g\cdot\varphi\big)\,x_1dx_1dx_2\\
&=\int_\Omega\big(\nabla u\cdot\nabla\varphi+(u_1/x_1)\cdot(\varphi_1/x_1)\big)\,x_1dx_1dx_2.
\end{split}
\end{equation}
\end{definition}
Following the classical theory, we will construct the weak solutions to the system \eqref{C102} by solving a sequence of approximation systems 
\begin{equation}\label{C113}
\begin{cases}
\frac{1}{x_1}\partial_{1}(x_1\rho^\varepsilon u_{1}^\varepsilon)+\partial_{2}(\rho^\varepsilon u_2^\varepsilon)=0,\\
\frac{1}{x_1}\partial_1(x_1\rho^\varepsilon (u_1^\varepsilon)^2)+\partial_{2}(\rho^\varepsilon u_1^\varepsilon u_2^\varepsilon)-\frac{1}{x_1}\partial_{1}(x_1\partial_{1}u_1^\varepsilon)-\partial_{22}u_1^\varepsilon+\frac{1}{x_1^2}u_1^\varepsilon+\partial_{1}P^\varepsilon=g_1,\\
\frac{1}{x_1}\partial_1(x_1\rho^\varepsilon u_1^\varepsilon u_2^\varepsilon)+\partial_2(\rho^\varepsilon (u_2^\varepsilon)^2)-\frac{1}{x_1}\partial_1(x_1\partial_1u_2^\varepsilon)-\partial_{22}u_2^\varepsilon+\partial_2P^\varepsilon=g_2,
\end{cases}
\end{equation}
where the pressure is given by $P^\varepsilon=\rho^\varepsilon+\varepsilon(\rho^\varepsilon)^{\alpha}$ with $\alpha\geq 10$ which ensures the existence of approximation solutions due to the arguments given by Lions \cite[Theorem 6.11]{L2}. 

For the unbounded domain $(0,\infty)\times\mathbb{T}$,
we suppose that
\begin{equation}\label{C114}
\begin{split}
u_1^\varepsilon\big|_{\{x_1=0\}}&=0,~~\partial_1u_2^\varepsilon\big|_{\{x_1=0\}}=0,\\
&u^\varepsilon\rightarrow 0\ \mbox{as $x_1\rightarrow\infty$}.
\end{split}
\end{equation}
Moreover, in order to describe the far field behaviour \eqref{C105}$_2$ of the density, we consider the following restriction that
\begin{equation}\label{CC101}
\int_\Omega P^\varepsilon\omega(x_1)\,dx_1dx_2=M,
\end{equation}
for some constant $M>0$. The weight function $\omega(x_1)\in C^\infty\big([0,\infty)\big)$ satisfies
\begin{equation}\label{C116}
\begin{cases}
\omega(x_1)=x_1\ \forall x_1\in[0,1],\\
\omega(x_1)=x_1^{-2}\ \forall x_1\in[2,\infty),\\
1/4<\omega\leq 4\ \forall x_1\in[1,2].
\end{cases}
\end{equation}
We mention that the condition \eqref{CC101} is introduced by Lions \cite[Section 6.8]{L2} which also ensures that $P^\varepsilon\rightarrow P^\varepsilon_\infty$ for some constant $P^\varepsilon_\infty>0$ as $x_1\rightarrow\infty$ in some sense due to the fast decay rate of $\omega$ (see arguments in \cite[Section 6.8]{L2}).  

While for the bounded domain $(0,1)\times\mathbb{T}$, we impose the boundary conditions
\begin{equation}\label{CC102}
\begin{split}
u_1^\varepsilon\big|_{\{x_1=0\}}&=\partial_1u_2^\varepsilon\big|_{\{x_1=0\}}=0,\\
&u^\varepsilon\big|_{\{x_1=1\}}=0, 
\end{split}
\end{equation}
as well as the physical restriction
\begin{equation}\label{CC103}
\int_\Omega P^\varepsilon\,x_1dx_1dx_2=M,
\end{equation}
for some constant $M>0$ providing the total mass of the system.

The existence results of the problem \eqref{C113} with conditions \eqref{C114}--\eqref{CC101} in $(0,\infty)\times\mathbb{T}$ and the problem \eqref{C113} with conditions \eqref{CC102}--\eqref{CC103} in $(0,1)\times\mathbb{T}$ are given by the next lemma which can be found in \cite[Sections 6.7--6.8]{L2}.
\begin{lemma}\label{L21}
Let $\Omega=(0,\infty)\times\mathbb{T}$ or $(0,1)\times\mathbb{T}$ and $M>0$ be a positive constant. If the external force $g$ satisfies
\begin{equation}\label{C115}
g\in\mathcal{L}^1(\Omega)\cap\mathcal{L}^\infty(\Omega).
\end{equation}
Then the system \eqref{C113} in $(0,\infty)\times\mathbb{T}$ under conditions \eqref{C114}--\eqref{CC101} and the system \eqref{C113} in $(0,1)\times\mathbb{T}$ under conditions \eqref{CC102}--\eqref{CC103} both admit at least one weak solution $(\rho^\varepsilon,u_1^\varepsilon,u_2^\varepsilon)$, which also gains the extra regularity that
\begin{equation}\label{C203}
\begin{split}
\rho^\varepsilon\in L^\infty_{loc}(\Omega),~~u^\varepsilon\in W^{1,p}_{loc}(\Omega),\, \forall p\in[1,\infty).
\end{split}
\end{equation} 
In addition, for the unbounded domain $(0,\infty)\times\mathbb{T}$, we can find a non-negative constant $P^\varepsilon_\infty$ and some $r_0>2$, such that
\begin{equation}\label{C118}
P^\varepsilon-P^\varepsilon_\infty\in\mathcal{L}^{r_0}(\Omega),
\end{equation}
which corresponds to the far field behaviour $P^\varepsilon\rightarrow P^\varepsilon_\infty$ as $x_1\rightarrow\infty$.

As the result, \eqref{C203} and \eqref{C118} ensure that \eqref{C113}$_1$ holds in the sense of renormalized solutions: $\forall\psi\in C_0^\infty(\Omega),$
\begin{equation}\label{C111}
\int_\Omega b(\rho^\varepsilon)u^\varepsilon\cdot\nabla\psi\,x_1dx_1dx_2
=\int_\Omega\big(b'(\rho^\varepsilon)\rho^\varepsilon-b(\rho^\varepsilon)\big)\,\big(\mathrm{div}u^\varepsilon+\frac{u^\varepsilon_1}{x_1}\big)\cdot\psi\,x_1dx_1dx_2,
\end{equation}
for any $b(t)\in C^1([0,\infty))$, such that $b'(t)=0$ when $t$ is large enough. 
\end{lemma}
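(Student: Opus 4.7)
The plan is to deduce this lemma from Lions' construction in \cite[Sections 6.7--6.8]{L2} and to verify the adaptations needed for the axisymmetric setting. First I would lift the problem on $\Omega$ to the full three-dimensional problem on $\hat\Omega=\mathbb{R}^2\times\mathbb{T}$ or $\mathbb{D}\times\mathbb{T}$ by extending every axisymmetric datum $f$ to its rotation-invariant counterpart $\hat f$. The approximate system \eqref{C113} then becomes the standard stationary compressible Navier--Stokes system on $\hat\Omega$ with artificial pressure $\hat P^\varepsilon=\hat\rho^\varepsilon+\varepsilon(\hat\rho^\varepsilon)^\alpha$. The threshold $\alpha\geq 10$ is exactly what makes Lions' Galerkin-plus-artificial-viscosity scheme work: it yields enough integrability of $\hat\rho^\varepsilon$ to justify passage to the limit via the effective viscous flux identity and to recover a renormalized continuity equation. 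The mass condition (respectively weighted mass condition with $\omega$) is enforced by a shooting argument on a constant added to $\hat\rho^\varepsilon$.

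Axisymmetry is preserved by choosing test functions and Galerkin basis that are invariant under rotations about the $x_3$-axis; since $\hat g$ and, in the bounded case, the Dirichlet boundary data are already axisymmetric, the entire approximation lives in the rotation-invariant class, and the weak limit descends to a solution $(\rho^\varepsilon,u_1^\varepsilon,u_2^\varepsilon)$ of \eqref{C113} on $\Omega$. The boundary conditions at $\{x_1=0\}$ in \eqref{C114} and \eqref{CC102} are then automatic consequences of the smoothness of $\hat u^\varepsilon$ across the axis combined with its axial symmetry. The extra regularity \eqref{C203} is a routine bootstrap from the artificial pressure: since $\rho^\varepsilon\in L^\alpha_{loc}$ with $\alpha\geq 10$, applying interior elliptic $L^p$ theory to the momentum equations, combined with Lions' effective viscous flux identity, promotes $\rho^\varepsilon$ to $L^\infty_{loc}(\Omega)$ and $u^\varepsilon$ to $W^{1,p}_{loc}(\Omega)$ for every finite $p$. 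With $\rho^\varepsilon$ locally bounded and $u^\varepsilon\in W^{1,p}_{loc}$, the renormalized identity \eqref{C111} follows at once from the DiPerna--Lions commutator lemma applied in the axisymmetric $(x_1,x_2)$ coordinates, the additional term $u_1^\varepsilon/x_1$ simply playing the role of the extra divergence piece coming from the Jacobian $x_1\,dx_1dx_2$.

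The main obstacle will be the far-field statement \eqref{C118} in the unbounded case, and this is where the special weight $\omega\sim x_1^{-2}$ in \eqref{CC101} enters essentially. The idea is to test the momentum equation against cut-offs of $\omega$ and use the uniform $\mathcal{L}^2$-bound on $\nabla u^\varepsilon$ together with $g\in \mathcal{L}^1\cap\mathcal{L}^\infty$ to control $\partial_1 P^\varepsilon$ in a weighted space; a Poincar\'e-type inequality adapted to $\omega$ then produces a constant $P^\varepsilon_\infty\geq 0$ such that $P^\varepsilon-P^\varepsilon_\infty\in\mathcal{L}^{r_0}(\Omega)$ for some $r_0>2$, the exponent being determined by the interplay between the decay of $\omega$ at infinity and the Sobolev exponent in the radial direction. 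The delicate point, carried out in detail in Lions' Section 6.8, is showing that $P^\varepsilon_\infty$ is finite and that the $\mathcal{L}^{r_0}$-bound survives the limit in the Galerkin parameter, so that the far-field stabilization of the pressure really is a property of the approximate weak solution rather than an artefact of the regularization.
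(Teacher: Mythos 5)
The paper gives no proof of this lemma; it simply cites Lions \cite[Sections 6.7--6.8]{L2} for the existence of the $\varepsilon$-approximation, its extra regularity, the far-field pressure constant, and the renormalized form of the continuity equation. Your proposal is therefore not being compared against an argument in the paper but against Lions' own construction, and at that level it is a faithful reconstruction: lifting to the axisymmetric class on $\hat\Omega$, running Lions' Galerkin-plus-artificial-viscosity scheme with the effective viscous flux identity (which is what $\alpha\geq 10$ enables), enforcing the mass and weighted-mass constraints by adjusting an additive constant, bootstrapping local regularity from the high-integrability artificial pressure, deducing \eqref{C111} from the DiPerna--Lions commutator lemma once $\rho^\varepsilon\in L^\infty_{loc}$ and $u^\varepsilon\in W^{1,p}_{loc}$, and identifying $P^\varepsilon_\infty$ via a weighted Poincar\'e argument in the unbounded case --- all of this is the content of the cited sections of \cite{L2}. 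One small point you glossed over, and which the paper handles via Remark \ref{R12} and \cite{Hoff92}, is the transfer between the 2D axisymmetric weak formulation \eqref{C108}--\eqref{C109} and the 3D one on $\hat\Omega$; the subtlety is that test functions on $\Omega$ need $\varphi_1|_{x_1=0}=\partial_1\varphi_2|_{x_1=0}=0$ to correspond to smooth axisymmetric 3D test fields, and your assertion that the boundary conditions at the axis are automatic relies implicitly on this correspondence. Your phrasing of $u_1^\varepsilon/x_1$ as ``the extra divergence piece coming from the Jacobian'' is the correct way to see why the commutator argument goes through in cylindrical coordinates. Given that the paper presents no proof of its own, your sketch is an acceptable and essentially the only reasonable route, and I see no genuine gap beyond the (understandable) hand-waving at the level of detail in Lions' Section 6.8 for the quantitative derivation of $r_0>2$.
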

\begin{remark}
The condition \eqref{C115} on the external force $g$ can be improved, see discussion in \cite[Chapter 6]{L2} for examples. Meanwhile, the conditions \eqref{CC101} and \eqref{CC103} help us determine the solution, since the system \eqref{C102} is multi-solvable if no restriction is imposed on $\rho$, see the counter examples provided by \cite[Section 6.7]{L2}.
\end{remark}

Now, we state the first result of the paper which guarantees the weak compactness of the approximation sequence.
\begin{lemma}\label{T1}
Under the assumptions of Lemma \ref{L21}, let $\{(\rho^\varepsilon,u_1^\varepsilon,u_2^\varepsilon) \}_{\varepsilon>0}$ be a sequence of solutions to the system \eqref{C113} in $(0,\infty)\times\mathbb{T}$ under conditions \eqref{C114}--\eqref{CC101} or the system \eqref{C113} in $(0,1)\times\mathbb{T}$ under conditions \eqref{CC102}--\eqref{CC103}, which is provided by Lemma \ref{L21} and satisfies \eqref{C203}--\eqref{C111}. Then if the external force $g$ also meets
\begin{equation}\label{C112}
\int_0^1g(x_1,x_2)\,dx_2=0,~~\forall x_1\in[0,\infty),
\end{equation}
we can find a subsequence 
converging weakly to a weak solution $(\rho, u_1, u_2)$ of the system \eqref{C102}. 
Moreover, the slices of the axial component $u_2$ satisfy that, $\forall x_1>0$ 
\begin{equation}\label{C130}
\bigg|\int_0^1 u_2(x_1,x_2)\,dx_2\bigg|\leq \max\{x_1^{-1},\mathbf{C}\},
\end{equation}
for some generic constant $\mathbf{C}$ determined by $\Omega$.
\end{lemma}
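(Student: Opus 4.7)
The plan is to establish uniform $\varepsilon$-independent estimates on $(\rho^\varepsilon,u^\varepsilon)$, extract a weakly convergent subsequence, pass to the limit in every term of \eqref{C113}, and separately derive the slice estimate \eqref{C130} by integrating the axial momentum equation in the $x_2$-direction and invoking the cancellation hypothesis \eqref{C112}. The principal obstacle will be identifying the weak limit of the pressure $P^\varepsilon$ (equivalently, proving strong convergence of $\rho^\varepsilon$) in the critical case $\gamma=1$, where $\rho^\varepsilon$ lives only in a weighted $L\log L$-type class rather than the $L^{\gamma+1}$ that powers the standard Lions--Feireisl program.

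First I would derive the basic energy estimate by testing \eqref{C113}$_{2,3}$ against $u^\varepsilon$ in $\mathcal{L}^2(\Omega)$ and using the renormalized continuity equation \eqref{C111} with a truncated version of $b(\rho)=\rho\log\rho$ to process the pressure-work term. This produces uniform control of $\|\nabla u^\varepsilon\|_{\mathcal{L}^2(\Omega)}$, of the Hardy-type quantity $\|u_1^\varepsilon/x_1\|_{\mathcal{L}^2(\Omega)}$, and an $L\log L$-type bound on $P^\varepsilon$. Combining this with the mass/weight constraints \eqref{CC101} or \eqref{CC103} and a weighted Bogovski-type construction (compatible on the unbounded tube with the fast-decay weight $\omega$ of \eqref{C116}) upgrades the pressure to a slightly super-integrable class; in the unbounded case, the far-field behaviour \eqref{C118} then guarantees that no mass concentrates at infinity.

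With these uniform bounds, along a subsequence I would obtain $u^\varepsilon\rightharpoonup u$, $\rho^\varepsilon\rightharpoonup\rho$, $P^\varepsilon\rightharpoonup\overline{P}$ and weak limits of the nonlinear terms $\overline{\rho u_iu_j}$. The compact embedding of gradient-bounded sequences, together with the continuity equation and the Div-Curl lemma, identifies $\overline{\rho u_iu_j}=\rho u_iu_j$. The central task of showing $\overline{P}=\rho$ will be attacked through the effective viscous flux identity: test each momentum equation against $\nabla\Delta^{-1}(\chi\rho^\varepsilon)$ for a compactly supported cutoff $\chi$, apply a further Div-Curl argument to cancel the commutators, and derive $\overline{P^\varepsilon\,\mathrm{div}\,u^\varepsilon}=\overline{P^\varepsilon}\,\mathrm{div}\,u$ weakly. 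Combining this with \eqref{C111} applied both along the sequence and to the limit $(\rho,u)$ in the DiPerna--Lions sense should yield the a.e.\ convergence $\rho^\varepsilon\to\rho$. This step is expected to be the hard part, since the usual argument propagates $L^{\gamma+1}$ integrability of the density, which collapses when $\gamma=1$; I would have to close it in an Orlicz-space framework in the spirit of Plotnikov--Weigant and Frehse--Steinhauer--Weigant, while carefully tracking the axisymmetric Hardy term $u_1/x_1$ and the weights throughout.

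For the slice estimate \eqref{C130}, integrating \eqref{C102}$_3$ over $x_2\in\mathbb{T}$ annihilates $\partial_2(\rho u_2^2)$, $\partial_{22}u_2$ and $\partial_2\rho$ by periodicity, while \eqref{C112} kills the right-hand side. What remains is a total $x_1$-derivative,
\[
\partial_1\big(x_1\bigl[\overline{\rho u_1 u_2}(x_1)-\partial_1\overline{u_2}(x_1)\bigr]\big)=0,
\]
where $\overline{f}(x_1):=\int_0^1 f(x_1,x_2)\,dx_2$; the axis boundary conditions $u_1|_{x_1=0}=\partial_1 u_2|_{x_1=0}=0$ together with the factor $x_1$ fix the integration constant to zero, giving $\partial_1\overline{u_2}=\overline{\rho u_1 u_2}$ on $(0,\infty)$. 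Integrating from $+\infty$ (where the far-field condition gives $\overline{u_2}\to 0$), using Cauchy--Schwarz in $x_2$, and the elementary weighted bound $\int_{x_1}^\infty h(y)\,dy\leq x_1^{-1}\int_0^\infty y\,h(y)\,dy$ with $h(y)=\overline{\rho|u|^2}(y)$, yields $|\overline{u_2}(x_1)|\leq x_1^{-1}\|\sqrt{\rho}\,u\|_{\mathcal{L}^2(\Omega)}^2$ for $x_1$ bounded away from zero; for small $x_1$ the local regularity \eqref{C203}, preserved in the weak limit, provides a constant bound, and combining the two regimes produces \eqref{C130}.
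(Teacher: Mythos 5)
Your proposed plan diverges from the paper in two essential ways, one of which amounts to a genuine misconception about the critical case $\gamma=1$, and the other is a gap that would in fact be circular.

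\textbf{The middle paragraph (effective viscous flux, Div--Curl, DiPerna--Lions, strong convergence of $\rho^\varepsilon$) is not only different from the paper but unnecessary and misdirected.} When $\gamma=1$ the pressure $P^\varepsilon=\rho^\varepsilon+\varepsilon(\rho^\varepsilon)^\alpha$ is \emph{linear} in $\rho^\varepsilon$ up to the artificial $\varepsilon$-term, so $\overline{P}=\rho$ follows from weak convergence of $\rho^\varepsilon$ alone, once one shows $\varepsilon(\rho^\varepsilon)^\alpha\to 0$ (which follows from the uniform bound $\varepsilon\int(\rho^\varepsilon)^{\alpha+\kappa}\,x_1\,dx_1dx_2\le C$, cf.~\eqref{C406}). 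The paper therefore does \emph{not} prove strong convergence of $\rho^\varepsilon$ and there is no effective viscous flux step. The genuinely hard step, which your sketch barely touches, is the extra \emph{local} integrability $\rho^\varepsilon\in L^{1+\theta}_{loc}$ with a controlled weight near the axis and at infinity (Propositions~\ref{P34}, \ref{P38}); this, plus the compact embedding $H^1(A_n)\hookrightarrow\hookrightarrow L^q(A_n)$ for $u^\varepsilon$ on the sets $A_n=\{1/n\le x_1\le n\}$ and the uniform integrability of $\rho^\varepsilon|u^\varepsilon|^2\,x_1$ near the axis (via \eqref{C408}), is what passes $\rho^\varepsilon u^\varepsilon_iu^\varepsilon_j\rightharpoonup\rho u_iu_j$ to the limit without any renormalized-solution/viscous-flux machinery.

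\textbf{The slice estimate is also carried out differently, and your version has a gap.} You reduce to bounding $\bigl|\int_{x_1}^\infty\!\int_0^1\rho u_1u_2\bigr|$ by $\|\sqrt{\rho}u\|^2_{\mathcal{L}^2(\Omega)}$, i.e.\ by $\int_\Omega\rho|u|^2\,x_1\,dx_1dx_2$. That quantity is not available; the paper only establishes $\int_\Omega\rho|u|^2\,\Phi\,dx_1dx_2<\infty$ with $\Phi\sim x_1^{-5}$ at infinity (Lemma~\ref{L31}), and crucially this bound is itself proved \emph{after} and \emph{using} the slice estimate (Propositions~\ref{P32}--\ref{P52}--\ref{P35}), so invoking it here would be circular. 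The paper instead exploits the cancellation $\int_0^1\rho u_1\,dx_2=0$ obtained from \eqref{C102}$_1$ to replace $u_2$ by $u_2-\int_0^1u_2\,dx_2$ inside the integral, so that Poincar\'e in $x_2$ produces $\int_0^1|\partial_2u_2|\,dx_2$. After Cauchy--Schwarz one is left with $(\int\rho|u_1|\,dx_2)^2 \le (\int P\,dx_2)(\int\rho u_1^2\,dx_2)$, a product of the \emph{radial} kinetic energy and the pressure, both of which are controllable by testing the radial momentum equation (Propositions~\ref{P32}, \ref{P52}), together with $\|\nabla u\|_{\mathcal{L}^2}$ from the energy estimate. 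The extra $s^{-1}$ weight that falls out of the Cauchy--Schwarz in \eqref{C320} is precisely the decay gain highlighted in the introduction; your direct Cauchy--Schwarz misses both the cancellation and this weight, and replaces the controllable $\rho u_1^2$ by the uncontrollable $\rho u_2^2$.

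On the positive side, your integration of \eqref{C102}$_3$ in $x_2$, the identification of the integration constant as zero via the axis conditions, and the role of \eqref{C112} are all in line with what the paper does (equations \eqref{C317}--\eqref{C318}). Also your near-axis observation that \eqref{C203} supplies a local bound is consistent with the $\max\{x_1^{-1},\mathbf{C}\}$ shape of \eqref{C130} (via \eqref{C343}). But as written the proposal would not close: drop the effective viscous flux program and replace the direct Cauchy--Schwarz in the slice estimate with the cancellation-plus-Poincar\'e step.
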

\begin{remark}
The estimates \eqref{C130} provide proper controls on the average of $u_2$ in any bounded domain away from the symmetric axis, and also lead to the weighted estimates on the $L^2$-norm of $u_2$. Note that it is not directly available from $\nabla u\in\mathcal{L}^2(\Omega)$ when $\Omega=(0,\infty)\times\mathbb{T}$ is unbounded.
\end{remark}
\begin{remark}
The cancellation condition \eqref{C112} imposed on the external force $g$ plays an important role in our arguments. If we expand $g$ in the $x_2$ direction by
\begin{equation*} 
g(x_1,x_2)=\sum_{k=-\infty}^\infty\beta_k(x_1)\,e^{2\pi ikx_2},
\end{equation*}
then the condition \eqref{C112} is simply interpreted as
$$\sum_{k=-\infty}^\infty\int_0^\infty|\beta_k(x_1)|^2x_1dx_1<\infty,~~\beta_0(x_1)\equiv 0.$$
We mention that Lemma \ref{T1} is valid for more general external force $g$ which meets
$$\sum_{k\in\mathbb{N},\,k\neq 0}\frac{1}{k^2}\int_0^\infty|\beta_k(x_1)|^2x_1dx_1<\infty,~~\beta_0(x_1)\equiv 0.$$

We are temporarily not able to treat  the general external force in form of $\rho f$. In fact, for the unknown $\rho$, it is not direct to check that \eqref{C112} holds:
$$\int_0^1\rho f(x_1,x_2)\,dx_2=0~~\forall x_1\geq 0.$$
The related problems for the general body external forces are left for the future.
\end{remark}
 
\begin{remark}\label{R12}
Suppose that $(\rho,u_1,u_2)$ is a weak solution obtained in Lemma \ref{T1}, for $\hat{x}=(\hat{x}',\hat{x}_3)\in\mathbb{R}^2\times\mathbb{T}$, we define $(\hat\rho,\hat{u})$ by
$$\hat\rho(\hat{x})=\rho(|\hat{x}'|,\hat{x}_3),~~\hat{u}(\hat{x})=\left(u_1(|\hat{x}'|,\hat{x}_3)\frac{\hat{x}'}{|\hat{x}'|},\,u_2(|\hat{x}'|,\hat{x}_3)\right),$$
then according to \cite[Theorem 5.7]{Hoff92}, $(\hat\rho,\hat{u})$ is a weak solution to \eqref{C101} in $\hat\Omega$.  
\end{remark}
 
The weak compactness assertion in Lemma \ref{T1} can be viewed as a local argument, and it is also sufficient to provide the existence of weak solutions in the case of the bounded domain $(0,1)\times\mathbb{T}$.

\begin{theorem}\label{TT1}
Suppose that the assumptions of Lemma \ref{L21} and the cancellation condition \eqref{C112} are valid, then for any constant $M>0$ and $\Omega=(0,1)\times\mathbb{T}$, we can construct at least one weak solution $(\rho,u_1,u_2)$ to the system \eqref{C102} in $\Omega$, satisfying \eqref{CC105} and
\begin{equation}\label{CC104}
\int_\Omega\rho\,x_1dx_1dx_2=M,
\end{equation}
which gives the total mass of the fluid.
\end{theorem}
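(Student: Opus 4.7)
The plan is to read Theorem \ref{TT1} as a direct consequence of Lemmas \ref{L21} and \ref{T1}, the only additional work being to verify that the mass normalization and the Dirichlet datum at $\{x_1=1\}$ survive the passage to the limit.

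First I would invoke Lemma \ref{L21} with $\Omega=(0,1)\times\mathbb{T}$ to obtain, for every $\varepsilon>0$, an approximation $(\rho^\varepsilon,u_1^\varepsilon,u_2^\varepsilon)$ of \eqref{C113} satisfying \eqref{CC102}, \eqref{CC103} and the regularity \eqref{C203}. Lemma \ref{T1}, whose hypotheses are met by construction, then provides a subsequence which converges weakly to a weak solution $(\rho,u_1,u_2)$ of \eqref{C102}. It remains to verify \eqref{CC105} and \eqref{CC104} for the limit.

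The mass identity is obtained by passing \eqref{CC103} to the limit. Since $P^\varepsilon=\rho^\varepsilon+\varepsilon(\rho^\varepsilon)^\alpha$, I would split
\[
M=\int_\Omega \rho^\varepsilon\,x_1dx_1dx_2+\varepsilon\int_\Omega(\rho^\varepsilon)^\alpha\,x_1dx_1dx_2.
\]
The first term converges to $\int_\Omega\rho\,x_1dx_1dx_2$ once the strong $\mathcal{L}^1$-convergence of $\rho^\varepsilon$ is in hand, and this is exactly the ingredient that must already be established inside the proof of Lemma \ref{T1} in order to pass to the limit in the linear pressure. The second term vanishes: applying a Bogovskii-type test function $\mathcal{B}((\rho^\varepsilon)^\theta-c)$ to the momentum equations yields the higher-integrability bound $\int_\Omega(\rho^\varepsilon)^{1+\theta}x_1dx_1dx_2+\varepsilon\int_\Omega(\rho^\varepsilon)^{\alpha+\theta}x_1dx_1dx_2\le C$ for some $\theta>0$ independent of $\varepsilon$, and H\"older's inequality then gives
\[
\varepsilon\int_\Omega(\rho^\varepsilon)^\alpha\,x_1dx_1dx_2\le C\,\varepsilon^{\theta/(\alpha+\theta)}\longrightarrow 0.
\]
This establishes \eqref{CC104} and the mass part of \eqref{CC105}.

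For the boundary behaviour at $\{x_1=1\}$, Poincar\'e's inequality together with the estimate $\nabla u^\varepsilon\in\mathcal{L}^2(\Omega)$ supplied by Lemma \ref{T1} and the condition $u^\varepsilon|_{\{x_1=1\}}=0$ of \eqref{CC102} yield a uniform $H^1$-bound of $u^\varepsilon$ on every annular strip $\{\delta<x_1<1\}$. Weak $H^1$-convergence combined with the weak continuity of the trace operator then transfers the Dirichlet condition to the limit, so $u|_{\{x_1=1\}}=0$. The axisymmetric conditions at $\{x_1=0\}$ are absorbed into the admissible class of test functions in the definition of a weak solution. The genuine difficulty lies entirely inside Lemma \ref{T1}; once that lemma is secured, Theorem \ref{TT1} reduces to the limiting procedure just described.
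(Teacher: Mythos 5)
Your proposal follows the same broad outline as the paper (Lemma~\ref{L21} gives the approximations, Lemma~\ref{T1} gives a weak limit, and the new work is to pass the mass identity \eqref{CC103} to the limit), but the way you close the mass identity diverges from the paper and rests on two claims that are neither established nor, in one case, even true in this setting.

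First, you assert that strong $\mathcal{L}^1$-convergence of $\rho^\varepsilon$ ``is exactly the ingredient that must already be established inside the proof of Lemma~\ref{T1} in order to pass to the limit in the linear pressure.'' This is backwards: because the pressure is \emph{linear} in $\rho$ when $\gamma=1$, weak $L^1$ convergence is all that is required to pass to the limit in the pressure term, and the paper only ever produces weak convergence --- see \eqref{C403} and \eqref{C407}, which give $\rho^\varepsilon x_1\rightharpoonup\rho\,x_1$ in $L^{1+\theta}(A_n)$ and $P^\varepsilon x_1\rightharpoonup\rho\,x_1$ in $L^{1+\kappa/\alpha}(A_0)$, nothing stronger. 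The strong convergence you invoke is nowhere available, and you cannot use it to pass $\int_\Omega\rho^\varepsilon x_1\,dx_1dx_2$ directly to $\int_\Omega\rho\,x_1\,dx_1dx_2$.

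Second, your Bogovskii-type bound $\int_\Omega(\rho^\varepsilon)^{1+\theta}x_1\,dx_1dx_2+\varepsilon\int_\Omega(\rho^\varepsilon)^{\alpha+\theta}x_1\,dx_1dx_2\le C$ \emph{uniformly on all of $\Omega$} is not one the paper proves. Lemma~\ref{L32} gives \eqref{CC303}, $\int_{[0,k]\times\mathbb{T}}(\rho^{1+\theta}+\varepsilon\rho^{\alpha+\theta})x_1^6\,dx_1dx_2\le C(k)$, where $C(k)$ degenerates as $k\to 1$ (see \eqref{CC324} and the cutoffs $\phi_k$ in \eqref{CC304}). So the H\"older step $\varepsilon\int_\Omega(\rho^\varepsilon)^\alpha x_1\,dx_1dx_2\le C\varepsilon^{\theta/(\alpha+\theta)}$ has no uniform input near the Dirichlet boundary $\{x_1=1\}$, and neither does the uniform integrability of $\{\rho^\varepsilon x_1\}$ that would be needed for the first term. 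The paper sidesteps both problems by using the compactness of $\overline{\Omega}$: it passes $\{P^\varepsilon x_1\}$ to a weak-$*$ limit measure $d\mu$, observes that the constant function $\Psi\equiv 1$ is an admissible test function so $\int_\Omega d\mu=M$, and then identifies $d\mu$ with $\rho\,x_1$ on every interior compactum $A_\delta=[\delta,1-\delta]\times\mathbb{T}$ through the $L^{r_0}$ ($r_0>1$) weak convergence \eqref{C403}. This measure-theoretic route never needs strong convergence, nor integrability that is uniform up to $x_1=1$. If you want to repair your argument in its current form, you would need to genuinely establish the uniform higher-integrability estimate near the Dirichlet boundary --- a nontrivial addition at $\gamma=1$ --- or else switch to the paper's measure argument.
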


In view of \eqref{CC104}, the weak solution obtained in Theorem \ref{TT1} can not be the trivial one that $\rho\equiv 0$. However, the weak compactness arguments in Lemma \ref{T1} are not strong enough to handle the case of unbounded domain $(0,\infty)\times\mathbb{T}$. In fact, the approximation sequence possibly converges to the trivial solution, due to the lack of strong compactness on unbounded domains (see the examples given at Section \ref{S4}), therefore we must establish the next concentration argument to exclude the trivial case, during the limit process in $(0,\infty)\times\mathbb{T}$.
 
\begin{theorem}\label{T12}
Suppose that the assumptions of Lemma \ref{L21} and the cancellation condition \eqref{C112} are valid, if $\Omega=(0,\infty)\times\mathbb{T}$ and 
$\{(\rho^\varepsilon,u_1^\varepsilon,u_2^\varepsilon) \}_{\varepsilon>0}$ is a sequence of solutions to the system \eqref{C113}--\eqref{CC101} in $\Omega$, which is provided by Lemma \ref{L21} and  satisfies \eqref{C203}--\eqref{C111}, then $\{\rho^\varepsilon\}_{\varepsilon>0}$ is concentrated on compact sets in the sense that, for any $\delta>0$, there is a constant $N>0$ such that $\forall\varepsilon>0$ 
\begin{equation}\label{CA102}
\int_{\{x_1\geq N\}}P^\varepsilon\,\omega\,dx_1dx_2<\delta.
\end{equation} 

Consequently, for any constant $M>0$, we can construct at least one weak solution $(\rho, u_1, u_2)$ to the system \eqref{C102} in $\Omega$, satisfying \eqref{C105} and
\begin{equation}\label{C119}
\int_\Omega\rho\,\omega\,dx_1dx_2=M,
\end{equation}
which corresponds to the far field behaviour $\rho\rightarrow\rho_\infty$ as $x_1\rightarrow\infty$.
\end{theorem}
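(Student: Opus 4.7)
The plan is to prove the theorem in two parts: (i) establish the concentration estimate \eqref{CA102}, which is the main new content; then (ii) use it together with Lemma \ref{T1} to construct the weak solution satisfying the mass constraint \eqref{C119}.

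For (i), the heart of the matter is to upgrade \eqref{C118} to a uniform-in-$\varepsilon$ bound
\begin{equation*}
\|P^\varepsilon-P^\varepsilon_\infty\|_{\mathcal{L}^{r_0}(\Omega)}\leq C
\end{equation*}
for some $r_0>2$. I would produce this via a Bogovskii-type test function argument: take $\varphi^\varepsilon=\mathcal{B}_k\bigl(\chi_k[(P^\varepsilon-P^\varepsilon_\infty)^{r_0-1}-c_k^\varepsilon]\bigr)$, where $\chi_k$ is a radial cutoff supported on the annulus $\{k\leq x_1\leq k+1\}\times\mathbb{T}$, $c_k^\varepsilon$ kills the mean of the bracketed quantity, and $\mathcal{B}_k$ is a right-inverse of the axisymmetric divergence compatible with the cylindrical weight $x_1$ and with \eqref{C114}. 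Testing \eqref{C109} against $\varphi^\varepsilon$ and summing over $k$ yields the desired $\mathcal{L}^{r_0}$ bound, the external-force contribution being absorbed thanks to \eqref{C112}. Uniform control of the far-field constant $P^\varepsilon_\infty$ then follows by inserting $P^\varepsilon=P^\varepsilon_\infty+(P^\varepsilon-P^\varepsilon_\infty)$ into \eqref{CC101} and applying H\"older's inequality:
\begin{equation*}
P^\varepsilon_\infty\int_\Omega \omega\,dx_1dx_2\leq M+\|P^\varepsilon-P^\varepsilon_\infty\|_{\mathcal{L}^{r_0}(\Omega)}\,\|\omega/x_1\|_{\mathcal{L}^{r_0'}(\Omega)}\leq C.
\end{equation*}
The concentration \eqref{CA102} is then obtained by the splitting
\begin{equation*}
\int_{\{x_1\geq N\}}P^\varepsilon\omega\,dx_1dx_2\leq \int_{\{x_1\geq N\}}|P^\varepsilon-P^\varepsilon_\infty|\,\omega\,dx_1dx_2+P^\varepsilon_\infty\int_{\{x_1\geq N\}}\omega\,dx_1dx_2.
\end{equation*}
Since $\omega(x_1)=x_1^{-2}$ for $x_1\geq 2$, the second summand is $\leq C/N\to 0$. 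For the first, H\"older's inequality in the measure $x_1\,dx_1 dx_2$ factorizes the integrand into the uniformly bounded $\mathcal{L}^{r_0}$-norm of $P^\varepsilon-P^\varepsilon_\infty$ and a weighted tail bounded by $\bigl(\int_N^\infty x_1^{1-3r_0'}\,dx_1\bigr)^{1/r_0'}$, which vanishes as $N\to\infty$ since $r_0'>1$ when $r_0>2$.

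For the existence assertion (ii), Lemma \ref{T1} already provides a subsequence converging weakly to a distributional solution $(\rho,u_1,u_2)$ of \eqref{C102}. The concentration \eqref{CA102}, together with the local strong compactness of the pressure on compact subsets of $\Omega$ (inherited from the effective viscous flux argument underlying Lemma \ref{T1}), allows passing \eqref{CC101} to the limit to obtain \eqref{C119}. Up to a further subsequence, $P^\varepsilon_\infty\to \rho_\infty$ for some $\rho_\infty\geq 0$, and weak lower-semicontinuity of the $\mathcal{L}^{r_0}$-norm transfers the uniform bound from (i) to give $\rho-\rho_\infty\in\mathcal{L}^{r_0}(\Omega)$; this encodes the far-field behavior $\rho\to\rho_\infty$ in \eqref{C105}, while \eqref{C119} with $M>0$ forces $\rho_\infty>0$.

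The main obstacle is the uniform $\mathcal{L}^{r_0}$-improvement of $P^\varepsilon-P^\varepsilon_\infty$ in Stage (i). The critical index $\gamma=1$ gives only $\mathcal{L}^1$ integrability a priori, and the Bogovskii inverse must be constructed on an unbounded domain with the weight $x_1$, respecting both the boundary at $x_1=0$ and the slow decay as $x_1\to\infty$; the usual cylindrical Bogovskii operator needs delicate modification to avoid losing the integrability gain when patching the annular estimates. The cancellation condition \eqref{C112} is indispensable for precisely the same reason as in Lemma \ref{T1}: it eliminates the $x_2$-mean-zero mode of $g$ that would otherwise collide with the slowly decaying Bogovskii test function and obstruct the estimate.
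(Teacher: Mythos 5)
Your Stage (i) is built on a uniform bound $\|P^\varepsilon-P^\varepsilon_\infty\|_{\mathcal{L}^{r_0}(\Omega)}\le C$ with $r_0>2$, which you propose to obtain by a Bogovskii argument. This is a genuine gap: such a bound is not available here, and the paper goes out of its way to say so. Lemma \ref{L21} only gives $P^\varepsilon-P^\varepsilon_\infty\in\mathcal{L}^{r_0}(\Omega)$ for each fixed $\varepsilon$, and the remark after \eqref{C520} states explicitly that even the much weaker weighted $L^2$ quantity \eqref{crucial} has an upper bound that ``still depends on $\varepsilon$ at the present stage.'' At the critical exponent $\gamma=1$, a Bogovskii test on annuli does not close: the convective term $\int\rho u_iu_j\partial_i\varphi_j$ requires extra integrability of $\rho|u|^2$ that is precisely what is missing, and one is back to the vicious circle the whole of Section \ref{S3} is designed to break. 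The paper's uniform estimates on the pressure are substantially weaker than $\mathcal{L}^{r_0}$ with $r_0>2$; they are of the form $\int_\Omega\rho^{1+\theta}x_1^{-m-1}\zeta_1^2\le C$ with a small $\theta=11/30$ and, decisively for the present statement, the weighted bound of Proposition \ref{P52},
\begin{equation*}
\int_\Omega P^\varepsilon\,x_1^{-9/8}\zeta_1\,dx_1dx_2\le C,
\end{equation*}
which is obtained not from Bogovskii but from the axisymmetric cancellation identities \eqref{C317}--\eqref{C318} and the slice estimates.

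Once \eqref{C507} is in hand, the concentration step is nearly trivial and has nothing to do with an $\mathcal{L}^{r_0}$ norm: since $\omega(x_1)=x_1^{-2}$ for $x_1\ge2$, one simply compares weights,
\begin{equation*}
\int_{\{x_1\ge N\}}P^\varepsilon\,\omega\,dx_1dx_2\le N^{-7/8}\int_\Omega P^\varepsilon\,x_1^{-9/8}\,dx_1dx_2\le CN^{-7/8},
\end{equation*}
and \eqref{CA102} follows. Your subsequent passage to the limit in \eqref{CC101} on $[0,N]\times\mathbb{T}$ to get \eqref{C119} is essentially right, but it needs only the weak convergence of $P^\varepsilon\cdot x_1$ on compact sets already established in the proof of Lemma \ref{T1}; invoking ``local strong compactness of the pressure'' via an effective-viscous-flux argument is more than what is used or needed, since \eqref{C119} is a linear constraint. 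The overall two-stage structure (concentration, then pass to the limit in the normalization) matches the paper, but the central estimate you rely on to prove the concentration is the wrong one and would not be uniform in $\varepsilon$.
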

\begin{remark}
We mention that \eqref{C119} also ensures that the solution is non-trivial. Compared with \eqref{CC104}, which gives the total mass of the fluid, the restriction \eqref{C119} is less physically meaningful. As indicated by Lions \cite[Section 6.8]{L2}, in the unbounded domain $(0,\infty)\times\mathbb{T}$, we are temporarily not able to construct a weak solution with the prescribing total mass
\begin{equation}\label{CC106}
\int_{(0,\infty)\times\mathbb{T}}\rho^\varepsilon\, x_1dx_1dx_2=M,
\end{equation}
which means taking $\omega(x_1)=x_1$ in \eqref{C119}, and the examples in Section \ref{S4} partly explain the reason: Note that \eqref{CC106} only guarantees that the weak limit satisfies
$$\int_{(0,\infty)\times\mathbb{T}}\rho\,x_1dx_1dx_2\leq M,$$
thus $\{\rho_\varepsilon\}_{\varepsilon>0}$ may vanish completely during the limit process. In contrast of it, \eqref{C119} leads to the concentration property \eqref{CA102}, which avoids the vanishing case and provides the key ingredient  of obtaining non-trivial solutions.

\end{remark}

The critical exponents cause essential difficulties in our analysis. Let us make some comments on the key technical points of the paper.

\textit{1. The critical heat ratio $\gamma=1$.}
 
The integrability of $\rho$ is quite low due to $\gamma=1$, especially in the region nearing the axis, where a singular weight $x_1^{-1}$ occurs.
We will carry out the interpolation arguments and obtain $\mathcal{L}_{loc}^{1+\varepsilon}(\Omega)$ estimates of $\rho$, which corresponds to the crucial extra integrability in classical arguments \cite{L2, JZ03}.

Motivated by \cite[Section 2]{JZ03}, the key ingredient is the weighted estimates on $\rho$ (see Proposition \ref{P32}),
$$\int_{\{x_1\leq 1\}}\rho\,x_1^{\varepsilon/2}\,dx_1dx_2\leq C.$$
Note that the index of weight $\varepsilon/2$ is strictly smaller than $1$, which controls the singularity near the axis. Then
let us multiply \eqref{C102} by $x_1^m$ with $m\geq 4$ to declare that
\begin{equation*} 
\begin{cases}
\mathrm{div}(\rho u\cdot x_1^m)=R_1,\\
\mathrm{div}(\rho u\otimes u\cdot x_1^m)-\Delta(u\cdot x_1^m)+\nabla(\rho\cdot x_1^m)=R_2.
\end{cases}
\end{equation*}
Thus,  
regarding $R_i$ as external forces, the above equations fall into the scope of the standard theory of 2-dimensional isothermal system in
\cite{PS05,PW}, which also provides the (weighted) extra integrability on $\rho$,
$$\int_{\{x_1\leq 1\}}\rho^{1+\alpha}\,x_1^m\,dx_1dx_2\leq C,$$
for some $\alpha\in(1/4,1)$. Combining these estimates, interpolation arguments lead to the desired $\mathcal{L}^{1+\epsilon}_{loc}(\Omega)$ estimates on $\rho$,  
$$\int_{\{x_1\leq 1\}}\rho^{1+\epsilon}x_1\,dx_1dx_2= \int_{\{x_1\leq 1\}}(\rho^{1+\alpha}\,x_1^m)^\theta\,(\rho\,x_1^{\varepsilon/2})^{1-\theta}\,dx_1dx_2\leq C.$$

\textit{2. The critical index of the Sobolev's inequality.}

When we deal with the unbounded domain $\Omega=(0,\infty)\times\mathbb{T}$,
the basic energy estimates of \eqref{C101} in Proposition \ref{P31} ensure that
\begin{equation*} 
\int_\Omega\left(|\nabla u|^2x_1+\frac{|u_1|^2}{x_1}\right)dx_1dx_2\leq C,
\end{equation*}
which merely provides controls on $\nabla u$ and $u_1$. However $\|\nabla u_2\|_{\mathcal{L}^2(\Omega)}$ fails to give any global $L^p$ estimates of $u_2$ on the 2D unbounded domain $\Omega$ and the compact embedding theorem also breaks down.
Inspired by the arguments on the thin domains (see \cite{RS93} for examples), we will turn to establish the upper bound of the slices 
\begin{equation}\label{C124}
\int_0^1 u_2(x_1,x_2)\,dx_2~~\forall x_1\in(0,\infty).
\end{equation}
The main observation is to integrate \eqref{C102}$_1$ and \eqref{C102}$_3$ with respect to the $x_2$ direction, and the equations will reduce largely to
\begin{equation*}
\begin{split}
& \partial_1\big(\int_0^1 x_1\,\rho u_1 \,dx_2\big)=0,\\
& \partial_1\left(\int_0^1 x_1\,\partial_1u_2 \,dx_2\right)=\partial_1\left(\int_0^1 x_1\rho u_1u_2 \,dx_2\right),
\end{split}
\end{equation*}
due to the periodic assumptions on the $x_2$ direction. Then the boundary conditions of $u_1$ and $u_2$ at the axis provide
the crucial cancellation condition on $\rho u_1$ along with the explicit formula of slices \eqref{C124} as follows (see Proposition \ref{P35}),
\begin{equation}\label{C125}
\begin{split}
&\int_0^1\rho u_1\,dx_2=0,~~\forall x_1\in(0,\infty),
\\
&\int_0^1 u_2(x_1,x_2)\,dx_2=-\int_{x_1}^\infty\int_0^1\rho u_1u_2\,dsdx_2,~~\forall x_1\in(0,\infty).
\end{split}
\end{equation}
Let us substitute \eqref{C125}$_1$ into \eqref{C125}$_2$ and infer that
\begin{equation}\label{C126}
\begin{split}
\int_0^1 u_2(x_1,x_2)\,dx_2
&=-\int_{x_1}^\infty\int_0^1\rho u_1(u_2-\int_0^1 u_2\,dx_2)\,dsdx_2\\
&\leq \int_{x_1}^\infty\big(\int_0^1\rho |u_1|\,dx_2\big)\big(\int_0^1|\nabla u_2|\,dx_2\big)\,ds\\
&\leq\big(\int_{x_1}^\infty(\int_0^1\rho |u_1|\,dx_2\big)^2s^{-1}\,ds\big)^{1/2}\|\nabla u_2\|_{\mathcal{L}^2(\Omega)}\\
&\leq C\big(\int_{x_1}^\infty(\int_0^1\rho |u_1|\,dx_2\big)^2s^{-1}\,ds\big)^{1/2}.
\end{split}
\end{equation}
Note that $u_1$ is properly controlled by energy estimates, thus \eqref{C126} gives desired bounds on the slices \eqref{C124}, which along with Poincar\'{e}'s inequality helps us treating the critical case of Sobolev's embedding theorem.

We specially mention that there is an extra weight $s^{-1}$ on the right hand side of \eqref{C126}, which cuts down the singularity at $\infty$ by order 1. Such fact plays an significant role in our compactness (concentration) arguments on unbounded domains, and it helps us excluding the trivial solutions in Theorem \ref{T12}. The cancellation method in \eqref{C126} seems
useful to improve the estimates on the tube-like domains.

The rest of the paper is organized as follows: Some basic properties and the elliptic estimates  are collected in Section \ref{S2}. Then we carry out a priori estimates under the conditions of Lemma \ref{T1} in Section \ref{S3} which is the central part of the paper. Next, in Subsection \ref{S4}, let us establish the weak compactness assertions, and Theorem \ref{TT1} in $(0,1)\times\mathbb{T}$ follows automatically. Finally, in Subsection \ref{S5}, we derive the concentration arguments in $(0,\infty)\times\mathbb{T}$, which leads to Theorem \ref{T12}.

\section{Preparations}\label{S2}
\quad In this section, we provide some basic facts  and properties which will be used frequently in our further arguments.  
We first quote the well known Hardy-Littlewood-Sobolev inequality to deal with the potential estimates.
\begin{lemma}
Suppose that $n\in\mathbb{N}^+$ and $\alpha\in(0,n)$, then for any $f\in C_0^\infty(\mathbb{R}^n)$, we define the potential
$$Tf(x)=\int_{\mathbb{R}^n}\frac{f(y)}{|x-y|^{n-\alpha}}\,dy.$$
Then $\forall p\in(1,\infty)$, there is a constant $C$ depending only on $n$, $p$, and $\alpha$, such that,
\begin{equation}\label{C204}
\|T(f)\|_{L^q(\mathbb{R}^n)}\leq C\|f\|_{L^p(\mathbb{R}^n)},
\end{equation}
where $1/q=1/p-\alpha/n$.
\end{lemma}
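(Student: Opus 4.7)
The plan is to follow Hedberg's argument, which gives the cleanest route to \eqref{C204}. Note that for the conclusion to be meaningful we also need the implicit constraint $p<n/\alpha$, so that $q\in(p,\infty)$; I will assume this throughout (otherwise the exponent $q$ given by the scaling relation is not in $(1,\infty)$).

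The first step is the dyadic-type splitting: for each $x\in\mathbb{R}^n$ and each parameter $R>0$, decompose
\begin{equation*}
Tf(x)=\int_{|x-y|<R}\frac{f(y)}{|x-y|^{n-\alpha}}\,dy+\int_{|x-y|\geq R}\frac{f(y)}{|x-y|^{n-\alpha}}\,dy=:I_1(x,R)+I_2(x,R).
\end{equation*}
For $I_1$ one decomposes the ball $\{|x-y|<R\}$ into dyadic annuli $\{2^{-k-1}R\leq |x-y|<2^{-k}R\}$; on each annulus $|x-y|^{n-\alpha}$ is comparable to $(2^{-k}R)^{n-\alpha}$, and the integral of $|f|$ over the annulus is controlled by the Hardy-Littlewood maximal function $Mf(x)$ times $(2^{-k}R)^n$. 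Summing the geometric series in $k$ gives $|I_1(x,R)|\leq C R^\alpha Mf(x)$. For $I_2$ I would apply H\"older's inequality with exponents $p,p'$ and use that $(n-\alpha)p'>n$ (which follows from $p<n/\alpha$), so the radial integral $\int_R^\infty r^{-(n-\alpha)p'}r^{n-1}dr$ is finite and equals a multiple of $R^{n-(n-\alpha)p'}$; raising to the $1/p'$ power and simplifying yields $|I_2(x,R)|\leq C R^{\alpha-n/p}\|f\|_{L^p(\mathbb{R}^n)}$.

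Next I would optimize in $R$. Balancing the two bounds, i.e.\ choosing $R$ so that $R^\alpha Mf(x)\sim R^{\alpha-n/p}\|f\|_{L^p}$, gives $R=(\|f\|_{L^p}/Mf(x))^{p/n}$, and substitution produces the pointwise estimate
\begin{equation*}
|Tf(x)|\leq C\bigl(Mf(x)\bigr)^{1-\alpha p/n}\|f\|_{L^p(\mathbb{R}^n)}^{\alpha p/n}.
\end{equation*}
Raising to the $q$-th power, integrating, and recalling $q(1-\alpha p/n)=p$ by the scaling relation, I obtain $\|Tf\|_{L^q}^q\leq C\|Mf\|_{L^p}^p\|f\|_{L^p}^{q-p}$. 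An application of the Hardy-Littlewood maximal inequality $\|Mf\|_{L^p}\leq C_p\|f\|_{L^p}$ (valid since $p>1$) then gives $\|Tf\|_{L^q}\leq C\|f\|_{L^p}$, as desired.

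The main obstacle is the use of the maximal function theorem at the final step: it is the only nontrivial input, and the requirement $p>1$ in the statement is precisely what makes it applicable (the borderline $p=1$ case would only give a weak-type estimate via Marcinkiewicz interpolation, not a strong bound). A secondary technical point is justifying the splitting and H\"older application for general $f\in C_0^\infty$ with no sign assumption; this is handled by replacing $f$ with $|f|$ and noting that $|Tf|\leq T(|f|)$ pointwise. With these observations the proof is essentially mechanical once Hedberg's splitting is in place.
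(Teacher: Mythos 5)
Your proof is correct: it is the classical Hedberg pointwise-domination argument, and all the arithmetic (the annular decomposition for $I_1$, the H\"older estimate for $I_2$ using $(n-\alpha)p'>n$, the optimization in $R$, and the final reduction to $\|Mf\|_{L^p}\leq C_p\|f\|_{L^p}$) checks out, including the correct observation that the implicit hypothesis $p<n/\alpha$ is needed for $q$ to be finite. The paper, however, does not prove this lemma at all -- it states the Hardy--Littlewood--Sobolev inequality as a well-known fact and uses it as a black box -- so there is no proof in the paper to compare against; your Hedberg argument is a standard, self-contained, and entirely adequate way to establish it, with the principal alternative in the literature being the original rearrangement/Riesz-interpolation route, which is longer and less elementary.
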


Next, we consider the elliptic estimates on $\mathbb{R}\times\mathbb{T}$ and $\mathbb{T}\times\mathbb{T}$ respectively. The complete proof of arguments below can be found in \cite[Chapter VII]{SW}.
\begin{lemma}\label{L23}
On the domain $\mathbb{R}\times\mathbb{T}$, we can introduce the operator $\Delta^{-1}$ which commutes with $\partial_i,i=1,2$, such that $u=\Delta^{-1}(f)$ is well defined for any $f\in C_0^\infty(\mathbb{R}\times\mathbb{T})$. Moreover, $\forall p\in(1,\infty)$, we can find a constant $C$ depending only on $p$ which guarantees 
\begin{equation}\label{C205}
\begin{split}
&\Delta u=f\ \mbox{in $\mathbb{R}\times\mathbb{T}$},~~\|\nabla^2 u\|_{L^p(\mathbb{R}\times\mathbb{T})}\leq C\|f\|_{L^p(\mathbb{R}\times\mathbb{T})}.
\end{split}
\end{equation}
In particular, if $f$ satisfies the cancellation condition in the $x_2$ direction, then $u$ satisfies it as well, say
\begin{equation}\label{C206}
\int_0^1 f(x_1,x_2)\,dx_2=0,\ \forall x_1\in\mathbb{R}\Rightarrow\int_0^1 u(x_1,x_2)\,dx_2=0,\ \forall x_1\in\mathbb{R}.
\end{equation} 
\end{lemma}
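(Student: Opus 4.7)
The approach is to build $\Delta^{-1}$ by Fourier series in the periodic direction and to read off both \eqref{C205} and \eqref{C206} from the resulting mode-by-mode formula; this is exactly the strategy of \cite[Chapter VII]{SW}.

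First I would write, for $f\in C_0^\infty(\mathbb{R}\times\mathbb{T})$,
$$f(x_1,x_2)=\sum_{k\in\mathbb{Z}}f_k(x_1)\,e^{2\pi i k x_2},\qquad f_k\in C_0^\infty(\mathbb{R}),$$
and solve $\Delta u=f$ mode-by-mode. For $k\neq 0$ the ODE $u_k''-4\pi^2 k^2 u_k=f_k$ has the unique bounded solution
$$u_k(x_1)=-\frac{1}{4\pi|k|}\int_{\mathbb{R}}e^{-2\pi|k|\,|x_1-s|}\,f_k(s)\,ds,$$
whose kernel decays exponentially. For $k=0$ I set $u_0(x_1)=\int_{-\infty}^{x_1}(x_1-s)f_0(s)\,ds$; any other choice of primitive-of-a-primitive differs by an affine function which is killed by $\nabla^2$, so the ambiguity is harmless for \eqref{C205}. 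Summing the modes defines $u=\Delta^{-1}f$. Because the construction is diagonal in the Fourier variable, the operator commutes with both $\partial_1$ and $\partial_2$.

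Second, for the estimate \eqref{C205} I would identify the symbol of $\partial_i\partial_j\Delta^{-1}$ on the dual group $\mathbb{R}\times\mathbb{Z}$ as $-\xi_i\xi_j/(\xi_1^2+4\pi^2 k^2)$. Away from the origin this symbol is homogeneous of degree zero and satisfies Mikhlin--Marcinkiewicz bounds uniformly in $(\xi_1,k)$, while the isolated $k=0$ slice contributes only $\partial_1^2 u_0=f_0$ (mixed and $\partial_{22}$ pieces of $u_0$ vanish), whose $L^p(\mathbb{R}\times\mathbb{T})$-norm is trivially dominated by $\|f\|_{L^p}$. Equivalently, one may periodize the Newton potential of $\mathbb{R}^2$ along the $\{0\}\times\mathbb{Z}$ lattice; the resulting kernel has the same $|x|^{-2}$ type singularity as the planar Newton kernel on the diagonal plus a smooth correction, so $\partial_i\partial_j$ of it is a genuine Calder\'on--Zygmund kernel on the tube $\mathbb{R}\times\mathbb{T}$, and the Calder\'on--Zygmund theorem yields \eqref{C205} for every $p\in(1,\infty)$.

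Finally, the cancellation \eqref{C206} is immediate from the Fourier construction: the assumption $\int_0^1 f(x_1,x_2)\,dx_2=0$ for all $x_1$ is exactly $f_0\equiv 0$, which by the formulas above forces $u_0\equiv 0$ and hence $\int_0^1 u(x_1,x_2)\,dx_2=0$ for every $x_1$. The only substantive technical step in this plan is the $L^p$-boundedness of the Calder\'on--Zygmund operator on $\mathbb{R}\times\mathbb{T}$ for $p\neq 2$, which is the main content of the cited chapter of \cite{SW} and which I would invoke rather than reprove; the construction of $\Delta^{-1}$ and the cancellation property are routine consequences of the mode-by-mode representation.
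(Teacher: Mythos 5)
Your proposal is correct and matches the approach the paper implicitly takes: the paper gives no in-text proof of Lemma~\ref{L23} but merely cites \cite[Chapter~VII]{SW}, and your mode-by-mode Fourier construction together with the Mikhlin/Calder\'on--Zygmund multiplier bound on $\mathbb{R}\times\mathbb{T}$ is exactly the argument one extracts from that source. Your explicit choice $u_0(x_1)=\int_{-\infty}^{x_1}(x_1-s)f_0(s)\,ds$ correctly pins down the zero mode so that the cancellation \eqref{C206} is literally the vanishing of $u_0$, and your observation that the $k=0$ slice contributes only $\partial_1^2 u_0=f_0$ to $\nabla^2 u$ disposes of the one mode where the symbol is not defined; the rest is standard transference.
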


There is a minor modification for the corresponding elliptic estimates on $\mathbb{T}\times\mathbb{T}$.
\begin{lemma}\label{L24}
On the torus $\mathbb{T}\times\mathbb{T}$, we can introduce the operator $\Delta^{-1}$ which commutes with $\partial_i,i=1,2$, such that $u=\Delta^{-1}(f)$ is well defined for any $f\in C_0^\infty(\mathbb{T}\times\mathbb{T})$ satisfying the cancellation condition 
$$\int_{\mathbb{T}\times\mathbb{T}}f\,dx_1dx_2=0.$$ Moreover, $\forall p\in(1,\infty)$, we can find a constant $C$ depending only on $p$ which guarantees 
\begin{equation}\label{C207}
\begin{split}
\Delta u&=f\ \mbox{in $\mathbb{T}\times\mathbb{T}$},~~\int_{\mathbb{T}\times\mathbb{T}}u\,dx_1dx_2=0,\\
&\|\nabla^2 u\|_{L^p(\mathbb{T}\times\mathbb{T})}\leq C\|f\|_{L^p(\mathbb{T}\times\mathbb{T})}.
\end{split}
\end{equation}
\end{lemma}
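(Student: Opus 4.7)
The plan is to follow the standard Fourier-analytic route on the torus, since $\mathbb{T}\times\mathbb{T}$ is compact with no boundary and the zero-mean obstruction is the only solvability condition for $\Delta u = f$. First I would expand $f\in C_0^\infty(\mathbb{T}\times\mathbb{T})$ with $\int f\,dx_1dx_2=0$ in a Fourier series
\begin{equation*}
f(x)=\sum_{k\in\mathbb{Z}^2\setminus\{0\}}\hat{f}(k)\,e^{2\pi\ui k\cdot x},
\end{equation*}
and then define the inverse Laplacian by
\begin{equation*}
u(x)=-\sum_{k\neq 0}\frac{\hat{f}(k)}{4\pi^2|k|^2}\,e^{2\pi\ui k\cdot x}.
\end{equation*}
By construction $\Delta u=f$, the normalization $\int u\,dx_1dx_2=0$ holds (we simply set $\hat{u}(0)=0$), and the commutation with $\pa_1,\pa_2$ is automatic at the multiplier level. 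The definition extends to all zero-mean $L^p$ functions by density once the estimate \eqref{C207} is proved.

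Next I would derive \eqref{C207} by recognizing that, on the Fourier side,
\begin{equation*}
\widehat{\pa_i\pa_j u}(k)=\frac{k_ik_j}{|k|^2}\,\hat{f}(k),\qquad k\neq 0,
\end{equation*}
so that $\pa_i\pa_j\Delta^{-1}$ is a Fourier-multiplier operator on $\mathbb{T}^2$ with symbol $m_{ij}(k)=k_ik_j/|k|^2$, which is the periodic analogue of a composition $R_iR_j$ of Riesz transforms on $\mathbb{R}^2$. The symbol $m_{ij}$ is smooth on $\mathbb{R}^2\setminus\{0\}$ and homogeneous of degree zero, so it satisfies Mihlin's condition
\begin{equation*}
|\xi|^{|\alpha|}\,|\pa^\alpha m_{ij}(\xi)|\leq C_\alpha\qquad\text{for all multi-indices }\alpha,
\end{equation*}
on $\mathbb{R}^2\setminus\{0\}$. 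The $L^p$ bound on the periodic operator then follows from the Marcinkiewicz–Mihlin multiplier theorem on $\mathbb{T}^n$ (or equivalently by a de Leeuw-type transference argument from the $\mathbb{R}^2$ Riesz transforms, which are the prototypical Calder\'on--Zygmund operators). Together with the elementary estimate $\|u\|_{L^p}+\|\nabla u\|_{L^p}\leq C\|\nabla^2 u\|_{L^p}$ coming from Poincar\'e's inequality for zero-mean functions, this yields \eqref{C207} for all $p\in(1,\infty)$.

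The main obstacle, such as it is, is simply the verification of the Mihlin condition for $m_{ij}$ and the passage from the continuous symbol to the periodic multiplier; both are routine. The one substantive point worth flagging is that the zero-mean condition on $f$ is not a technicality but a true compatibility requirement: if $\int f\,dx_1dx_2\neq 0$, then the $k=0$ Fourier coefficient cannot be inverted, reflecting the fact that $\Delta u=f$ on a closed manifold forces $\int f=0$. This is exactly why the statement of Lemma \ref{L24} imposes the cancellation hypothesis, in contrast to the $\mathbb{R}\times\mathbb{T}$ case of Lemma \ref{L23} where the noncompactness in $x_1$ renders no global obstruction and the cancellation in $x_2$ is instead \emph{preserved} by $\Delta^{-1}$ as in \eqref{C206}.
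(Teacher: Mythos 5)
Your proposal is correct and follows the standard Fourier-multiplier route (define $\Delta^{-1}$ via the symbol $-1/(4\pi^2|k|^2)$ on nonzero frequencies, then bound $\partial_i\partial_j\Delta^{-1}$ as a periodic multiplier with symbol $k_ik_j/|k|^2$ via Marcinkiewicz–Mihlin or de Leeuw transference from the Riesz transforms). This is exactly the argument the paper defers to by its citation of Stein–Weiss, Chapter VII, so your proof supplies the details the paper omits rather than taking a different route.
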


Then we provide some basic interior estimates of the elliptic equations. For detailed discussions, one may refer to \cite[Chapter II, VI, and VIII]{GT}.
\begin{lemma}Suppose that $\Omega'$ and $\Omega$ are two smooth domains in $\mathbb{R}^n$ satisfying $\Omega'\subset\subset\Omega$.

1). Let $u$ be a harmonic function in $\Omega$, 
$$\Delta u=0\ \mbox{in $\Omega$}.$$
Then $\forall p\in[1,\infty]$, there is a constant $C$ depending on $n$, $p$, and the distance from $\partial\Omega'$ to $\partial\Omega$, such that 
\begin{equation}\label{C208}
\|u\|_{L^\infty(\Omega')}\leq C\,\|u\|_{L^p(\Omega)}.
\end{equation}

2). Let $u$ solve the Dirichlet problem of the Poisson equation in $\Omega$,
\begin{equation*}
\begin{cases}
\Delta u=f\ \mbox{in $\Omega$},\\
u=0\ \mbox{on $\partial\Omega$}.\\
\end{cases}
\end{equation*}
Then $\forall p\in(1,n/2)$, there is a constant $C$ depending on $n$, $p$, and the distance from $\partial\Omega'$ to $\partial\Omega$, such that
\begin{equation}\label{C209}
\|u\|_{L^\infty(\Omega')}\leq C\left(\sup_{x\in\Omega'}\int_\Omega\frac{|f(y)|}{|x-y|^{n-2}}\,dy+\|f\|_{L^p(\Omega)}\right).
\end{equation}
\end{lemma}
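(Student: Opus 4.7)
For part 1), I would invoke the classical sub-mean-value property for the harmonic $u$. Since $\Omega'\subset\subset\Omega$, setting $r=\mathrm{dist}(\partial\Omega',\partial\Omega)/2>0$ guarantees $B_r(x)\subset\Omega$ for every $x\in\Omega'$. Because $|u|$ is subharmonic, one has
$$|u(x)|\leq \frac{1}{|B_r|}\int_{B_r(x)}|u(y)|\,dy\leq |B_r|^{-1/p}\|u\|_{L^p(B_r(x))}\leq C(n,p,r)\|u\|_{L^p(\Omega)}$$
by H\"{o}lder's inequality; taking the supremum over $x\in\Omega'$ yields \eqref{C208}.

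For part 2), I would split $u$ via the Newtonian potential and reduce the harmonic remainder to part 1). Extending $f$ by zero outside $\Omega$, let $\Gamma$ be the fundamental solution of $\Delta$ on $\mathbb{R}^n$ (so $|\Gamma(z)|\leq c_n|z|^{2-n}$ for $n\geq 3$) and set
$$v(x)\triangleq\int_{\Omega}\Gamma(x-y)f(y)\,dy,$$
so that $\Delta v=f$ in $\Omega$ and $w\triangleq u-v$ is harmonic in $\Omega$. The pointwise bound $|v(x)|\leq c_n\int_\Omega|x-y|^{2-n}|f(y)|\,dy$ immediately delivers the supremum term of \eqref{C209}. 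For the harmonic piece, part 1) gives $\|w\|_{L^\infty(\Omega')}\leq C\|w\|_{L^p(\Omega)}\leq C\bigl(\|u\|_{L^p(\Omega)}+\|v\|_{L^p(\Omega)}\bigr)$. The first summand is controlled by standard Calder\'{o}n-Zygmund $L^p$ elliptic regularity for the Dirichlet Poisson problem, $\|u\|_{L^p(\Omega)}\leq C\|f\|_{L^p(\Omega)}$; the second by the Hardy-Littlewood-Sobolev inequality \eqref{C204}, where the hypothesis $p<n/2$ makes the target exponent $q$ with $1/q=1/p-2/n$ admissible, followed by H\"{o}lder on the bounded set $\Omega$ to come back to $L^p$. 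Adding the $v$ and $w$ bounds produces \eqref{C209}.

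The main technical hurdle is confirming that every constant depends only on $n$, $p$, and $d=\mathrm{dist}(\partial\Omega',\partial\Omega)$. This forces the decomposition to use the $\mathbb{R}^n$-fundamental solution $\Gamma$ rather than the domain Green's function---whose behaviour intertwines with the geometry of $\partial\Omega$---and requires estimating $w$ through the ball mean-value of part 1) instead of any boundary-trace argument.
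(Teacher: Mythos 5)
Your proposal reproduces the paper's argument essentially verbatim: for part 2) both decompose $u=v+w$ with $v=\Gamma*\tilde f$ the Newtonian potential of the zero extension of $f$, read off the supremum term from the pointwise bound $|v(x)|\leq c_n\int_\Omega|x-y|^{2-n}|f(y)|\,dy$, note $w$ is harmonic, and close by applying part 1) to $w$ together with the standard $L^p$ Dirichlet estimate for $u$ and an HLS/Sobolev bound for $v$; the only cosmetic differences are that you prove part 1) from the sub-mean-value property (the paper cites it) and you apply part 1) to $w$ in $L^p$ with a final H\"older step rather than directly in $L^q$ as the paper does.
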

\begin{proof}
The estimate \eqref{C208} can be found in \cite[Chapter II]{GT}, thus we only prove \eqref{C209}. The standard $L^p$ elliptic estimate (see \cite[Chapter IX]{GT}) gives
\begin{equation}\label{C210}
\|u\|_{W^{2,p}(\Omega)}\leq C\,\|f\|_{L^p(\Omega)}.
\end{equation}

Let $\tilde{f}$ be the zero extension of $f$ to $\mathbb{R}^n$. We define $v=\Gamma\ast\tilde{f}$, where $\Gamma(x)$ is the fundamental solution of $\Delta$ in $\mathbb{R}^n$, and it holds that
\begin{equation}\label{C211}
|v(x)|\leq C\int_\Omega\frac{|f(y)|}{|x-y|^{n-2}}\,dy,~~\forall x\in\mathbb{R}^n. 
\end{equation}
Then Sobolev embedding theorem and the standard $L^p$ elliptic estimates (see \cite[Chapter IX]{GT}) ensure that, for some constant $C$ depending on $p$ and $1/q=1/p-2/n$,
$$\|v\|_{L^q(\mathbb{R}^n)}\leq C\,\|\nabla^2 v\|_{L^p(\mathbb{R}^n)}\leq C\,\|\tilde{f}\|_{L^p(\mathbb{R}^n)}\leq C\,\|f\|_{L^p(\mathbb{R}^n)}.$$
Moreover, the difference $w=u-v$ is harmonic in $\Omega$,
$$\Delta w=0,~~\mbox{in $\Omega$}.$$
Consequently, we apply \eqref{C208} and \eqref{C210} to deduce that
$$\|w\|_{L^\infty(\Omega')}\leq C\|w\|_{L^q(\Omega)}\leq C\big(\|u\|_{L^q(\Omega)}+\|v\|_{L^q(\Omega)}\big)\leq C\|f\|_{L^p(\Omega)},$$
which combined with \eqref{C211} gives \eqref{C209} and finishes the proof.
\end{proof}

We end this section by proving the modified Sobolev embedding theorem on $\mathbb{R}\times\mathbb{T}$.
\begin{lemma} 
Suppose that $f\in C_0^\infty(\mathbb{R}\times\mathbb{T})$ and satisfies the cancellation condition,
\begin{equation}\label{C212}
\int_0^1 f(x_1,x_2)\,dx_2=0,~~\forall x_1\in\mathbb{R}.
\end{equation}
Then for any $p\in[1,2)$, there is a constant $C$ depending only on $p$ such that
\begin{equation}\label{C213}
\|u\|_{L^q(\mathbb{R}\times\mathbb{T})}\leq C\|\nabla u\|_{L^p(\mathbb{R}\times\mathbb{T})},
\end{equation}
with $1/q=1/p-1/2$.
\end{lemma}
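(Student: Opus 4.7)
The goal is the Sobolev–Gagliardo–Nirenberg inequality on the cylinder $\mathbb{R}\times\mathbb{T}$ with no lower-order term on the right; the cancellation condition \eqref{C212} is precisely what is needed to kill the zeroth Fourier mode in $x_2$, which is the obstruction to such a bound on the unbounded cylinder. The plan is to combine the standard Sobolev embedding on $\mathbb{R}^2$ (applied to a suitable cutoff of the periodic extension in $x_2$) with the slice-wise Poincar\'{e} inequality in the $x_2$ variable.

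The first step is to establish the weaker inequality
$$\|f\|_{L^q(\mathbb{R}\times\mathbb{T})}\leq C\bigl(\|f\|_{L^p(\mathbb{R}\times\mathbb{T})}+\|\nabla f\|_{L^p(\mathbb{R}\times\mathbb{T})}\bigr),$$
without using the cancellation. To this end, choose $\chi\in C_0^\infty(\mathbb{R})$ with $\chi\equiv 1$ on $[0,1]$ and $\mathrm{supp}\,\chi\subset[-1,2]$, let $\tilde f$ be the $1$-periodic extension of $f$ in $x_2$, and set $F(x_1,x_2)\triangleq\chi(x_2)\,\tilde f(x_1,x_2)$. Since $f\in C_0^\infty(\mathbb{R}\times\mathbb{T})$, one checks $F\in C_0^\infty(\mathbb{R}^2)$, and the sharp $\mathbb{R}^2$-Sobolev inequality applies to $F$, giving $\|F\|_{L^q(\mathbb{R}^2)}\leq C\|\nabla F\|_{L^p(\mathbb{R}^2)}$. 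Because $F\equiv f$ on $\mathbb{R}\times[0,1]$, the left side dominates $\|f\|_{L^q(\mathbb{R}\times\mathbb{T})}$, while $\nabla F=\chi'\tilde f\,\partial_2+\chi\,\nabla \tilde f$ is supported in $\mathbb{R}\times[-1,2]$, so the periodicity of $\tilde f$ yields $\|\nabla F\|_{L^p(\mathbb{R}^2)}\leq C(\|f\|_{L^p}+\|\nabla f\|_{L^p})$, producing the displayed inequality.

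The second step is to eliminate $\|f\|_{L^p}$ using \eqref{C212}. For each fixed $x_1\in\mathbb{R}$, the slice $x_2\mapsto f(x_1,x_2)$ has mean zero on $\mathbb{T}$, so the one-dimensional Poincar\'{e} inequality on the torus gives
$$\int_0^1|f(x_1,x_2)|^p\,dx_2\leq C\int_0^1|\partial_2 f(x_1,x_2)|^p\,dx_2,$$
and integration in $x_1$ produces $\|f\|_{L^p(\mathbb{R}\times\mathbb{T})}\leq C\|\partial_2 f\|_{L^p(\mathbb{R}\times\mathbb{T})}\leq C\|\nabla f\|_{L^p(\mathbb{R}\times\mathbb{T})}$. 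Substituting this into the estimate from the first step yields \eqref{C213}.

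The argument is essentially routine and I do not anticipate a serious obstacle; the only care needed is at the extension/cutoff step, where one must verify that $F$ really does lie in $C_0^\infty(\mathbb{R}^2)$ so that the scaling-critical $\mathbb{R}^2$-Sobolev inequality (with no lower-order correction on the right) applies, and that the commutator term $\chi'\tilde f$ generated by the cutoff is controlled by $\|f\|_{L^p}$, which in turn is absorbed by $\|\nabla f\|_{L^p}$ via the Poincar\'{e} step. Conceptually, the whole point is that the cancellation \eqref{C212} is exactly what compensates for the failure of Sobolev's inequality on an unbounded domain containing the constants.
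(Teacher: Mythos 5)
Your proof is correct, but it takes a genuinely different route from the paper's. The paper gives a self-contained Nirenberg-style argument: for $p=1$, $q=2$, it uses the fundamental theorem of calculus in $x_1$ (since $f$ has compact support in $x_1$) and, crucially, the intermediate value theorem to find for each $x_1$ a point $\xi(x_1)\in\mathbb{T}$ where $f(x_1,\xi(x_1))=0$, so that $f$ can be represented as a one-variable integral of $\partial_1 f$ and of $\partial_2 f$; multiplying these two representations and integrating gives the $L^2$ bound by $\|\nabla f\|_{L^1}^2$. The general $p\in(1,2)$ case is then obtained by the usual bootstrapping on $|f|^{q/2}$, splitting off the slice averages $\int_0^1|f(x_1,\cdot)|^{q/2}\,dx_2$ and controlling them separately. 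Your argument instead reduces the whole statement to two black boxes: the scale-invariant Gagliardo--Nirenberg--Sobolev inequality on $\mathbb{R}^2$ (applied to a cutoff of the $x_2$-periodic extension, with the commutator term $\chi'\tilde f$ controlled by $\|f\|_{L^p}$ on one period), and the one-dimensional Poincar\'{e} inequality on $\mathbb{T}$ for mean-zero slices, which eliminates the lower-order $\|f\|_{L^p}$ term. This makes the role of the cancellation condition conceptually cleaner (it is exactly the Poincar\'{e} hypothesis), and it avoids the inductive bootstrap on $q$, at the price of invoking the classical $\mathbb{R}^2$ inequality rather than rederiving everything from scratch as the paper does.
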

\begin{proof}
We first prove \eqref{C213} for $p=1$ and $q=2$.
According to \eqref{C212} and the intermediate value theorem, for any $(x_1,x_2)\in\mathbb{R}\times\mathbb{T}$, we can find $\xi(x_1)\in\mathbb{T}$ such that
\begin{equation}\label{CC201}
\begin{split}
f(x_1,x_2)=\int_{-\infty}^{x_1}\partial_1f(s,x_2)\,ds,~~ 
f(x_1,x_2)=\int_{\xi(x_1)}^{x_2}\partial_2f(x_1,s)\,ds.
\end{split}
\end{equation}
Consequently, for $\Omega=\mathbb{R}\times\mathbb{T}$, we calculate that
\begin{equation}\label{CC202}
\begin{split}
&\int_\Omega|f(x_1,x_2)|^2\,dx_1dx_2\\
&\leq\int_\Omega\left(\int_{-\infty}^{\infty}|\partial_1f(s,x_2)|\,ds\right)\cdot\left(\int_{\mathbb{T}}|\partial_2f(x_1,s)|\,ds\right)\,dx_1dx_2\\
&\leq C\left(\int_\Omega|\nabla f|\,dx_1dx_2\right)^2,
\end{split}
\end{equation}
which gives \eqref{C213} for $p=1$ and $q=2$. Then for any $q>2$, we check that
\begin{equation}\label{CC203}
\begin{split}
\int_\Omega|f(x_1,x_2)|^q\,dx_1dx_2 
&\leq\int_\Omega\left(|f(x_1,x_2)|^{q/2}-\int_0^1|f(x_1,s)|^{q/2}\,ds\right)^2\,dx_1dx_2\\
&\quad+\int_\mathbb{R}\left(\int_0^1|f(x_1,s)|^{q/2}\,ds\right)^2\,dx_1.
\end{split}
\end{equation}
For the first term on the right hand side of \eqref{CC203}, we apply \eqref{CC202} to derive that
\begin{equation}\label{CC204}
\begin{split}
&\int_\Omega\left(|f(x_1,x_2)|^{q/2}-\int_0^1|f(x_1,s)|^{q/2}\,ds\right)^2\,dx_1dx_2\\
&\leq C\left(\int_\Omega |f(x_1,x_2)|^{q/2-1}|\nabla f(x_1,x_2)|\,dx_1dx_2\right)^2\\
&\leq C\left(\int_\Omega|f(x_1,x_2)|^q\,dx_1dx_2\right)^{(q-2)/q}\left(\int_\Omega|\nabla f(x_1,x_2)|^{2q/(q+2)}\,dx_1dx_2\right)^{(q+2)/q}.
\end{split}
\end{equation}
While in view of \eqref{CC201}, we have
\begin{equation}\label{CC205}
\begin{split}
&\int_0^1|f(x_1,s)|^{q/2}\,ds\\
&\leq C\int_{-\infty}^{x_1}\int_0^1|f(t,s)|^{q/2-1}|\partial_1 f(t,s)|\,dtds\\
&\leq C \int_\Omega |f(x_1,x_2)|^{q/2-1}|\nabla f(x_1,x_2)|\,dx_1dx_2 \\
&\leq C\left(\int_\Omega|f(x_1,x_2)|^q\,dx_1dx_2\right)^{\frac{q-2}{2q}}\left(\int_\Omega|\nabla f(x_1,x_2)|^{2q/(q+2)}\,dx_1dx_2\right)^{\frac{q+2}{2q}},
\end{split}
\end{equation}
where the last line is due to \eqref{CC204}. Taking supremum of $x_1$ over $\mathbb{R}$ in \eqref{CC205} gives 
\begin{equation}\label{CC206}
\begin{split}
&\sup_{x_1\in\mathbb{R}}\int_0^1|f(x_1,s)|^{q/2}\,ds\\
&\leq C\left(\int_\Omega|f(x_1,x_2)|^q\,dx_1dx_2\right)^{\frac{q-2}{2q}}\left(\int_\Omega|\nabla f(x_1,x_2)|^{2q/(q+2)}\,dx_1dx_2\right)^{\frac{q+2}{2q}}.
\end{split}
\end{equation} 
Moreover, by virtue of \eqref{CC201}, we also check that
\begin{equation}\label{CC207}
\begin{split}
&\int_\Omega|f(x_1,x_2)|^{q/2}\,dx_1dx_2\\
&\leq C\int_\Omega|f(x_1,x_2)|^{q/2-1}\left(\int_0^1|\partial_2 f(x_1,s)|\,ds\right)dx_1dx_2\\
&\leq C\int_\mathbb{R}\left(\int_0^1|f(x_1,s)|^{q/2-1}\,ds\right)\cdot\left(\int_0^1|\partial_2 f(x_1,s)|\,ds\right)dx_1\\
&\leq C\left(\int_\Omega|f(x_1,s)|^q\,dx_1ds\right)^{(q-2)/2q}\left(\int_\Omega|\nabla f(x_1,s)|^{2q/(q+2)}\,dx_1ds\right)^{(q+2)/2q}.
\end{split}
\end{equation}
Combining \eqref{CC206} and \eqref{CC207} implies that the last line of \eqref{CC203} is bounded by
\begin{equation*}
\begin{split}
&\int_\mathbb{R}\left(\int_0^1|f(x_1,s)|^{q/2}\,ds\right)^2\,dx_1\\
&\leq\sup_{x_1\in\mathbb{R}}\int_0^1|f(x_1,s)|^{q/2}\,ds\cdot \int_\Omega|f(x_1,x_2)|^{q/2}\,dx_1dx_2\\
&\leq C\left(\int_\Omega|f(x_1,x_2)|^q\,dx_1dx_2\right)^{(q-2)/q}\left(\int_\Omega|\nabla f(x_1,x_2)|^{2q/(q+2)}\,dx_1dx_2\right)^{(q+2)/q},
\end{split}
\end{equation*}
which together with \eqref{CC204} leads to
\begin{equation}\label{CC208}
\begin{split}
&\int_\Omega|f(x_1,x_2)|^q\,dx_1dx_2\\
&\leq C\left(\int_\Omega|f(x_1,x_2)|^q\,dx_1dx_2\right)^{(q-2)/q}\left(\int_\Omega|\nabla f(x_1,x_2)|^{2q/(q+2)}\,dx_1dx_2\right)^{(q+2)/q}.
\end{split}
\end{equation}
Note that $f\in C_0^\infty(\Omega)$ means that $\|f\|_{L^q(\Omega)}$ is finite, thus we divide \eqref{CC208} by $\|f\|_{L^q(\Omega)}^{(q-2)/q}$ and arrive at
\begin{equation*}
\left(\int_\Omega|f(x_1,x_2)|^q\,dx_1dx_2\right)^{2/q}\leq C\left(\int_\Omega|f(x_1,x_2)|^p\,dx_1dx_2\right)^{2/p},
\end{equation*}
with $1/q=1/p-1/2$, which also gives \eqref{C213}    and finishes the proof.
\end{proof}

\section{A priori estimates}\label{S3}
\quad In this section, we will concentrate on a priori estimates of the system \eqref{C113} in the unbounded domain $\Omega=(0,\infty)\times\mathbb{T}$. While the corresponding arguments for the bounded domain $(0,1)\times\mathbb{T}$ follow by minor modifications.

Through out this section, we suppose that $(\rho^\varepsilon,u_1^\varepsilon,u_2^\varepsilon)$ is the approximation solution guaranteed by Lemma \ref{L21} satisfying
\eqref{C203}--\eqref{C111}. 
To simplify the notations, we omit the superscript $\varepsilon$, and the constant $C$ may vary form line to line, but it is independent of $\varepsilon$. Note that we will study the problem in $\Omega=(0,\infty)\times\mathbb{T}$, 
\begin{equation}\label{C301}
\begin{cases}
\frac{1}{x_1}\partial_{1}(x_1\rho u_{1})+\partial_{2}(\rho u_2)=0,\\
\frac{1}{x_1}\partial_1(x_1\rho u_1^2)+\partial_{2}(\rho u_1 u_2)-\frac{1}{x_1}\partial_{1}(x_1\partial_{1}u_1)-\partial_{22}u_1+\frac{1}{x_1^2}u_1+\partial_{1}P=g_1,\\
\frac{1}{x_1}\partial_1(x_1\rho u_1 u_2)+\partial_2(\rho u_2^2)-\frac{1}{x_1}\partial_1(x_1\partial_1u_2)-\partial_{22}u_2+\partial_2P=g_2,
\end{cases}
\end{equation}
where the pressure $P=\rho+\varepsilon \rho^{\alpha}$, and the boundary values are given by 
\begin{equation}\label{C302}
\begin{split}
u_1\big|_{\{x_1=0\}}&=0,~~\partial_1u_2\big|_{\{x_1=0\}}=0,\\
&u\rightarrow 0\ \mbox{as $x_1\rightarrow\infty$}.
\end{split}
\end{equation}
Meanwhile, the density $\rho$ satisfies the condition
\begin{equation}\label{CC503}
\int_\Omega P\,\omega\,dx_1dx_2= M,
\end{equation}
with the weigh function $\omega(x_1)$ satisfying
\begin{equation*} 
\begin{cases}
\omega(x_1)=x_1\ \forall x_1\in[0,1],\\
\omega(x_1)=x_1^{-2}\ \forall x_1\in[2,\infty),\\
1/4<\omega\leq 4\ \forall x_1\in[1,2].
\end{cases}
\end{equation*}

In addition, supposing that $\hat x=(\hat x',\hat x_3)\in\mathbb{R}^2\times\mathbb{T}$, we define $(\hat\rho,\hat u)$ by
\begin{equation}\label{C346}
\hat\rho(\hat{x})=\rho(|\hat{x}'|,\hat{x}_3),~~\hat{u}(\hat{x})=\left(u_1(|\hat{x}'|,\hat{x}_3)\frac{\hat{x}'}{|\hat{x}'|},\,u_2(|\hat{x}'|,\hat{x}_3)\right).
\end{equation}
Then, according to Remark \ref{R12}, $(\hat\rho,\hat u)$ is a weak solution to the following system in $\hat\Omega$,
\begin{equation}\label{C350}
\begin{cases}
\mathrm{div}(\hat\rho\, \hat u)=0,\\
\mathrm{div}(\hat\rho\, \hat u\otimes \hat u)-\Delta\hat u+\nabla\hat P=\hat g,
\end{cases}
\end{equation}
where $\hat P=\hat\rho+\varepsilon\hat\rho^{\alpha}$ and $\hat g=(g_1\cdot\hat{x}'/|\hat x'|,\, g_2)$.

Let us introduce another weight function $\Phi(x_1)\in C^\infty([0,\infty))$ to measure the integrability of $\rho|u|^2$. It satisfies
\begin{equation}\label{C309}
\begin{cases}
\Phi(x_1)=x_1~~ \forall x_1\in[0,2],\\
\Phi(x_1)=x_1^{-5}~~ \forall x_1\in[4,\infty),\\
0\leq\Phi(x_1)\leq 4~~ \forall x_1\in[0,\infty).
\end{cases}
\end{equation}
Note that $\Phi(x_1)$ decays faster than $\omega(x_1)$ as $x_1\rightarrow\infty$. The crucial estimates are given by the next lemma, which illustrates $\rho|u|^2\in\mathcal{L}_{loc}^1(\Omega)$.  
\begin{lemma}\label{L31}
Under the conditions of Lemma \ref{T1}, there is a constant $C$ depending on $\Phi$, such that
\begin{equation}\label{C310}
\int_\Omega\rho|u|^2\Phi(x_1)\,dx_1dx_2\leq C.
\end{equation}
\end{lemma}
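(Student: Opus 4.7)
The plan is to derive a weighted energy identity by testing the three-dimensional momentum equation in \eqref{C350} against the axisymmetric vector field $\hat u\,\Phi(|\hat x'|)$ and integrating over $\hat\Omega$. Since $\hat u\in W^{1,p}_{\mathrm{loc}}$ and $\Phi$ is smooth with fast decay, the product is an admissible test vector; after the usual integrations by parts, the resulting identity exhibits the coercive quantity $\int|\nabla\hat u|^{2}\Phi\,d\hat x$ on the left, balanced against convective, commutator, pressure and external-force remainders, from which I will extract \eqref{C310}.

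I would first handle the kinematic terms. Using $\mathrm{div}(\hat\rho\hat u)=0$, the convective contribution $\int\mathrm{div}(\hat\rho\hat u\otimes\hat u)\cdot\hat u\,\Phi\,d\hat x$ collapses (after integration by parts of $\tfrac12\mathrm{div}(\hat\rho\hat u|\hat u|^{2})\Phi$) to the cubic tail $-\tfrac12\int\hat\rho|\hat u|^{2}u_{1}\Phi'(|\hat x'|)\,d\hat x$, since $\nabla\Phi$ is purely radial and $u_{1}=\hat u\cdot\hat x'/|\hat x'|$. The viscous term splits into the coercive $\int|\nabla\hat u|^{2}\Phi\,d\hat x$ plus a commutator $-\tfrac12\int|\hat u|^{2}\Delta\Phi\,d\hat x$, in which $\Delta\Phi=\Phi''+\Phi'/|\hat x'|$ is bounded and effectively supported on the transition zone.

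The pressure work $-\int\hat P\,\Phi\,\mathrm{div}\hat u\,d\hat x$ is the essential obstacle at $\gamma=1$, since $\hat\rho\,\mathrm{div}\hat u$ is unsigned. I would dissolve it via the renormalized continuity \eqref{C111}: choosing $b(\rho)=\rho\log\rho$ yields the pointwise identity $\hat\rho\,\mathrm{div}\hat u=-\mathrm{div}(\hat\rho\log\hat\rho\,\hat u)$, and similarly $b(\rho)=\rho^{\alpha}/(1-\alpha)$ yields $\hat\rho^{\alpha}\,\mathrm{div}\hat u=-\mathrm{div}(\hat\rho^{\alpha}\hat u)/(\alpha-1)$. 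Substituting and integrating by parts once more converts the pressure work into boundary-type integrals of the form $\int\hat\rho\log\hat\rho\,u_{1}\Phi'\,d\hat x$, $\varepsilon(\alpha-1)^{-1}\int\hat\rho^{\alpha}u_{1}\Phi'\,d\hat x$, and $\int\hat P u_{1}\Phi'\,d\hat x$, all localized by the bounded weight $\Phi'$ and controllable by Cauchy--Schwarz together with the mass bound \eqref{CC503} and $\nabla\hat u\in L^{2}$. The external force $\int\hat g\cdot\hat u\,\Phi\,d\hat x$ is treated through the cancellation \eqref{C112}: writing $g=\partial_{2}G$ with $G$ axially periodic and integrating by parts in $x_{2}$ produces a bound of the form $\delta\int|\nabla\hat u|^{2}\Phi+C_{\delta}\|g\|^{2}_{\mathcal{L}^{2}}$, absorbable into the coercive piece.

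Combining these ingredients yields a weighted dissipation bound $\int|\nabla\hat u|^{2}\Phi\,d\hat x\le C$ uniformly in $\varepsilon$. The target \eqref{C310} then follows by combining this weighted dissipation with the Sobolev inequality on $\hat\Omega$ to upgrade $\hat u$ to $L^{6}$-type control in weighted norm, together with the uniform $L^{1}$-bound $\int\hat P\omega\,d\hat x\le M$ supplied by \eqref{CC503} and weighted H\"older interpolation (the compatibility $\Phi\lesssim\omega$ being used essentially here). The hard part will be the book-keeping required to absorb the cubic remainder $\int\hat\rho|\hat u|^{2}u_{1}\Phi'$ and the pressure-derived boundary terms into a small multiple of the desired quantity via Young's inequality, without ever invoking an unsigned $\int\hat\rho\,\mathrm{div}\hat u$ --- this is precisely where the renormalized continuity at $\gamma=1$ becomes indispensable.
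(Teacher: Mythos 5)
Your proposed weighted-energy test against $\hat u\,\Phi$ has a fatal gap at the very last step, and the intermediate gain you are aiming for is in fact vacuous. Since $0\leq\Phi\leq 4$ and $\Phi(x_1)=x_1$ near the axis, we have $\Phi(x_1)\leq Cx_1$ for every $x_1$; so $\int_{\hat\Omega}|\nabla\hat u|^2\Phi\,d\hat x=2\pi\int_\Omega\bigl(|\nabla u|^2+u_1^2/x_1^2\bigr)\Phi\,x_1\,dx_1dx_2\leq C\int_\Omega\bigl(|\nabla u|^2x_1+u_1^2/x_1\bigr)\,dx_1dx_2$, which is already bounded by the basic energy estimate of Proposition \ref{P31}. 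Your ``weighted dissipation bound'' therefore delivers nothing beyond \eqref{C303}, and the burden of the proof falls entirely on the final ``Sobolev plus weighted H\"older interpolation'' sentence.

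That interpolation cannot close precisely because $\gamma=1$ is critical. The only integrability available for the density is $\int_\Omega\rho\,\omega\,dx_1dx_2\leq M$, i.e.\ an $L^1$-type bound, while Sobolev gives at best $\hat u\in L^6_{\mathrm{loc}}(\hat\Omega)$ (and even that fails globally on the unbounded $\mathbb{R}^2\times\mathbb{T}$, as the paper stresses). The pairing $\int\rho|u|^2\Phi$ would require, by H\"older, something like $\rho\in L^{3/2}$ against $u\in L^6$, and with $\rho\in L^1$ only there is no exponent budget: a density concentrating on a small set where $|u|$ is large (permitted by $\nabla u\in L^2$ in a two-dimensional cross-section, which does not embed into $L^\infty$) makes $\int\rho|u|^2\Phi$ arbitrarily large even while all of your ingredients — mass bound, weighted dissipation, and the compatibility $\Phi\lesssim\omega$ — remain uniformly controlled. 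This is the exact obstruction the paper is built to overcome, and it is not a book-keeping matter.

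The paper's actual route is structurally different and does real work to manufacture the missing integrability: it first establishes the slice cancellations $\int_0^1\rho u_1\,dx_2=0$ and $\partial_1\int_0^1 u_2\,dx_2=\int_0^1\rho u_1u_2\,dx_2$ by integrating the equations in $x_2$ (Propositions \ref{P32}--\ref{P35}); it then uses Bogovskii-type test functions $\nabla\Delta^{-1}(\rho^\theta)$ with truncated weights and the potential estimates of Proposition \ref{P33} to get a weighted $L^{1+\theta}$ bound on $\rho$ (Propositions \ref{P34} and \ref{P38}, step~1); and finally it closes via a three-way interpolation \eqref{C379}--\eqref{C380} between $\rho^{1+\theta}x_1^m$, $\rho^\beta|u|^{2\beta+2}x_1^{3/2}$, and $\rho\,x_1^{\varepsilon_0}$, producing the self-referential inequality $\int\rho|u|^2\Phi\,\zeta_0\,dx_1dx_2\leq C(\int\rho|u|^2\Phi\,\zeta_0\,dx_1dx_2+1)^{11/12}$. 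None of these ingredients — the slice cancellations, the pressure--momentum potential estimates, the Bogovskii test functions, or the self-referential interpolation — appear in your proposal. A secondary issue is that the renormalized continuity \eqref{C111} is stated for $b$ with $b'(t)=0$ for large $t$; $b(\rho)=\rho\log\rho$ is outside that class, and the resulting term $\int\hat\rho|\log\hat\rho|\,|u_1|\Phi'$ is in any case not controllable by the available $L^1$ mass bound uniformly in $\varepsilon$. The overall strategy needs to be replaced, not patched.
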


The proof of Lemma \ref{L31} consists of three parts. We first establish the overview of the analysis on the domain $(0,\infty)\times\mathbb{T}$ in Subsection \ref{SS30}, which reflects the essential features of our method.
Then we consider
the domain away from the axis in Subsection \ref{SS31}, and end up with the estimates near the axis in Subsection \ref{SS32}. After finishing Lemma \ref{L31}, in Subsection \ref{SS33}, we will extend the arguments on $(0,\infty)\times\mathbb{T}$ to the bounded domain $(0,1)\times\mathbb{T}$ and shortly sketch the proof.

\subsection{Analysis on the tube domain $(0,\infty)\times\mathbb{T}$}\label{SS30}
\quad  Let us start with the basic energy estimates of the system. 
\begin{proposition}\label{P31}
Under the conditions of Lemma \ref{T1}, there is a constant $C$ determined by the external force $g$, such that
\begin{equation}\label{C303}
\int_\Omega\left(|\nabla u|^2x_1+\frac{|u_1|^2}{x_1}\right)dx_1dx_2\leq C.
\end{equation}
\end{proposition}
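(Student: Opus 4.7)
The plan is the standard energy identity for \eqref{C350}: test the momentum balance against $\hat u$ itself. I would work with the axisymmetric 3D form on $\hat\Omega$, since the cylindrical conversion
\[
\int_{\hat\Omega}|\nabla\hat u|^2\,d\hat x \;=\; 2\pi\int_\Omega\Bigl(|\nabla u|^2\,x_1+\frac{|u_1|^2}{x_1}\Bigr)dx_1dx_2
\]
identifies the quantity to be bounded with $\|\nabla\hat u\|_{L^2(\hat\Omega)}^2$, and the $\Delta\hat u$ term yields a clean identity rather than the mixed cylindrical form. Concretely, multiply \eqref{C350}${}_2$ by $\hat u\,\zeta_R(|\hat x'|)$ with a radial cutoff $\zeta_R$, integrate, and let $R\to\infty$. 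The convective term collapses to $\tfrac12\int\mathrm{div}(\hat\rho\hat u)\,|\hat u|^2 =0$ by \eqref{C350}${}_1$; the viscous term produces $\int|\nabla\hat u|^2$, with all surface contributions vanishing by virtue of the axis and far-field conditions \eqref{C302}.

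The crux is the pressure term. Since $\hat P$ itself need not decay at infinity, I would rewrite $\nabla\hat P = \nabla(\hat P-\hat P^\varepsilon_\infty)$ using \eqref{C118} and integrate by parts to obtain $-\int(\hat P-\hat P^\varepsilon_\infty)\,\mathrm{div}\hat u\,d\hat x$. The target is
\[
\int_{\hat\Omega}\hat\rho\,\mathrm{div}\hat u\,d\hat x \;=\; 0 \;=\; \int_{\hat\Omega}\hat\rho^\alpha\,\mathrm{div}\hat u\,d\hat x,
\]
which I would obtain by applying the renormalized continuity \eqref{C111} with $b(s)=s\log s$ and $b(s)=s^\alpha$, each approximated by truncations $b_k$ of compactly supported derivative and tested against cutoffs $\psi_R\uparrow 1$. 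The regularity \eqref{C203} controls the integrands on compact sets, while \eqref{C118} together with $\alpha\ge 10$ dominates the tails, allowing the limit in both $k$ and $R$. The constant piece $\hat P^\varepsilon_\infty\int\mathrm{div}\hat u$ vanishes by the same cutoff procedure plus the boundary data in \eqref{C302}. The resulting identity
\[
\|\nabla\hat u\|_{L^2(\hat\Omega)}^2 \;=\; \frac{1}{2\pi}\int_{\hat\Omega}\hat g\cdot\hat u\,d\hat x
\]
is then closed by H\"{o}lder interpolation $\|\hat g\|_{L^{6/5}}\le\|\hat g\|_{L^1}^{5/6}\|\hat g\|_{L^\infty}^{1/6}$, the 3D Sobolev embedding $\dot H^1\hookrightarrow L^6$ on $\mathbb{R}^2\times\mathbb{T}$ (available because $\hat u\to 0$ at infinity), and Young's inequality to absorb $\|\nabla\hat u\|_{L^2}$ into the left-hand side.

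I expect the main obstacle to be exactly this pressure cancellation on the unbounded domain: \eqref{C111} is stated only for compactly supported $\psi$ and for $b$ with $b'$ of compact support, whereas the argument requires the global identities for $b(s)=s\log s$ and $b(s)=s^\alpha$. Everything hinges on a careful double approximation in $k$ and $R$ whose tails must be controlled by \eqref{C118} and by the rapid decay built into the weight $\omega$. On the bounded cylinder $(0,1)\times\mathbb{T}$ this difficulty disappears and the argument reduces to a routine test by $u_1,u_2$ against \eqref{C301}${}_{2,3}$ with weight $x_1$.
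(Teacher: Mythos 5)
Your derivation of the energy identity is sound (if anything, you are more scrupulous about the pressure cancellation than the paper's one-line citation of \eqref{C108}, which implicitly uses the renormalized form \eqref{C111} with $b(s)=s\log s$ and $b(s)=s^\alpha$). However, the closure step fails, and it fails in precisely the way that the introduction flags as one of the two critical obstructions: the embedding $\dot H^1(\mathbb{R}^2\times\mathbb{T})\hookrightarrow L^6(\mathbb{R}^2\times\mathbb{T})$ is \emph{false}. Take $\hat u(\hat x)=\varphi(\hat x'/\lambda)$ with $\varphi\in C_0^\infty(\mathbb{R}^2)$, constant in $\hat x_3$. The Dirichlet integral is scale invariant in two dimensions, so $\|\nabla\hat u\|_{L^2(\hat\Omega)}$ stays fixed, while $\|\hat u\|_{L^6(\hat\Omega)}^6=2\pi\lambda^2\|\varphi\|_{L^6(\mathbb{R}^2)}^6\to\infty$ as $\lambda\to\infty$. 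These counterexamples have compact support, so the far-field condition $\hat u\to 0$ does not rescue the estimate, and your step bounding $\int\hat g\cdot\hat u$ by $\|\hat g\|_{L^{6/5}}\|\hat u\|_{L^6}$ with Young absorption does not go through.

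The missing ingredient is the cancellation hypothesis \eqref{C112}, $\int_0^1 g(x_1,x_2)\,dx_2=0$, which your argument never invokes but which the paper exploits decisively. Expanding $u=\sum_k\alpha_k(x_1)\,e^{2\pi ikx_2}$ and $g=\sum_k\beta_k(x_1)\,e^{2\pi ikx_2}$, the hypothesis forces $\beta_0\equiv 0$, so the work done by the force pairs only with nonzero Fourier modes of $u$, where periodicity in $x_2$ supplies a factor of $k$. One gets $\int_\Omega g\cdot u\,x_1\,dx_1dx_2\le\big(\sum_{k\neq 0}k^2\int_0^\infty|\alpha_k|^2x_1dx_1\big)^{1/2}\big(\sum_{k\neq 0}k^{-2}\int_0^\infty|\beta_k|^2x_1dx_1\big)^{1/2}\le C\|\partial_2 u\|_{\mathcal{L}^2(\Omega)}$, the last factor being finite by \eqref{C115}. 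This transverse Poincar\'e bound substitutes for the missing Sobolev embedding on the unbounded tube and is then absorbed into $\int|\nabla u|^2 x_1$ by Cauchy--Schwarz. Without \eqref{C112} the zero mode $\alpha_0=\int_0^1 u\,dx_2$ is not controlled by $\|\nabla u\|_{\mathcal{L}^2}$, and the estimate cannot close.
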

\begin{proof}
Adding \eqref{C301}$_2$ multiplied by $u_1\cdot x_1$ and \eqref{C301}$_3$ multiplied by $u_2\cdot x_1$ together, then integrating over $\Omega$ yields
\begin{equation}\label{C304}
\int_\Omega\left(|\nabla u|^2x_1+\frac{|u_1|^2}{x_1}\right)dx_1dx_2=\int_\Omega g\cdot u\,x_1dx_1dx_2,
\end{equation}
where we have applied \eqref{C108} and the fact $u_i\cdot\partial_iu_j\cdot u_j=u\cdot\nabla(|u|^2/2)$.

Let us expand $u$ and $g$ in the $x_2$ direction by
\begin{equation*}
u=\sum_{k=-\infty}^\infty\alpha_k(x_1)\, e^{2\pi ikx_2},~~g=\sum_{k=-\infty}^\infty\beta_k(x_1)\, e^{2\pi ikx_2}.
\end{equation*}
Note that \eqref{C112} ensures $\beta_0(x_1)\equiv 0$. Thus, the right hand side of \eqref{C304} is handled by
\begin{equation*}
\begin{split}
&\int_{0}^\infty\big(\int_0^1g\cdot u\, dx_2\big)\cdot x_1dx_1\\
&=\int_0^\infty\big(\sum_{k\neq 0}\alpha_k(x_1)\cdot\beta_k(x_1)\big)\,x_1dx_1\\
&\leq\big(\sum_{k\neq 0}\int_0^\infty k^2|\alpha_k(x_1)|^2x_1dx_1\big)^{\frac{1}{2}}\big(\sum_{k\neq 0}\int_0^\infty k^{-2}|\beta_k(x_1)|^2x_1dx_1\big)^{\frac{1}{2}}\\
&\leq C\,\big(\int_\Omega|\partial_2 u|^2x_1dx_1dx_2\big)^\frac{1}{2},
\end{split}
\end{equation*}
where the last line is due to \eqref{C112}.  
Substituting the above results into \eqref{C304} and applying Cauchy-Schwarz's inequality, we arrive at
$$\int_\Omega\left(|\nabla u|^2x_1+\frac{|u_1|^2}{x_1}\right)dx_1dx_2\leq C.$$
The proof of Proposition \ref{P31} is therefore completed.
\end{proof}

Next proposition deals with the weighted estimates of $\rho$ and $\rho |u_1|^2$, totally speaking, the estimates of the radial component $u_1$ are better than that of the axial component $u_2$. We introduce a truncation function $\zeta_0(x_1)\in C^\infty([0,\infty))$ by
\begin{equation}\label{C382}
\begin{cases}
\zeta_0=1~~\forall x_1\in[0,1],\\
\zeta_0=0~~\forall x_1\in[2,\infty),\\
0\leq\zeta_0\leq 1~~\forall x_1\in[0,\infty).
\end{cases}
\end{equation}
In other words, $\zeta_0$ is a truncation near the symmetric axis, while $\zeta_1\triangleq(1-\zeta_0)$ is a truncation away from the axis. Let us establish the crude estimates on $\rho u_1$ and $P$.
\begin{proposition}\label{P32}
Under the conditions of Lemma \ref{T1}, for any $m\geq 6$ and $0<{\varepsilon_0}\leq 1$, there is a constant $C$ depending only on $m$ and $\varepsilon_0$ such that
\begin{equation}\label{C305}
\begin{split}
\int_\Omega\big(\rho u_1^2+P\big)\,( x_1^{\varepsilon_0}\cdot\zeta_0+x_1^{-m}\cdot\zeta_1)\,dx_1dx_2\leq C\int_A \rho u_1^2\,dx_1dx_2+C,\\
\end{split}
\end{equation}
where $A=[10,20]\times\mathbb{T}\subset\Omega$. 
\end{proposition}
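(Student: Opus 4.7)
The plan is to test the radial momentum equation \eqref{C301}$_2$ with a weighted multiplier $\psi(x_1) \cdot x_1$ chosen so that integration by parts produces exactly the desired weights on $P$, while the convection term simultaneously contributes a same-signed weighted $\rho u_1^2$ term that can be moved to the left-hand side. Concretely, I take
$$\psi(x_1) := \frac{x_1^{\varepsilon_0}}{1 + \varepsilon_0}\,\chi(x_1) \;-\; \frac{x_1^{-m}}{m - 1}\bigl(1 - \chi(x_1)\bigr),$$
where $\chi \in C^\infty$ is a cutoff with $\chi \equiv 1$ on $[0, 10]$, $\chi \equiv 0$ on $[20, \infty)$, and transition region precisely $A$. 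A direct computation gives $(\psi x_1)' = x_1^{\varepsilon_0}$ on $[0, 10]$ and $(\psi x_1)' = x_1^{-m}$ on $[20, \infty)$, with bounded smooth interpolation on $A$; correspondingly $x_1 \psi' = \frac{\varepsilon_0}{1+\varepsilon_0} x_1^{\varepsilon_0}$ on $[0, 10]$ and $\frac{m}{m-1} x_1^{-m}$ on $[20, \infty)$, both non-negative. Since $\psi x_1 \to 0$ at both $x_1 = 0$ and $x_1 \to \infty$ (the latter using $m \geq 6 > 1$), all boundary terms in the subsequent integrations by parts vanish automatically.

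Multiplying \eqref{C301}$_2$ by $\psi x_1$, integrating over $\Omega$, and using $x_2$-periodicity together with \eqref{C302} yields the identity
$$\int_\Omega P\,(\psi x_1)' \, dx_1 dx_2 + \int_\Omega x_1 \rho u_1^2 \psi' \, dx_1 dx_2 \;=\; \int_\Omega x_1 \partial_1 u_1 \psi' \, dx_1 dx_2 + \int_\Omega \frac{u_1 \psi}{x_1} \, dx_1 dx_2 - \int_\Omega g_1 \psi x_1 \, dx_1 dx_2.$$
The left-hand side dominates $\int_{[0, 10]} (P + \tfrac{\varepsilon_0}{1+\varepsilon_0}\rho u_1^2) x_1^{\varepsilon_0}\, dx_1 dx_2 + \int_{[20, \infty)} (P + \tfrac{m}{m-1}\rho u_1^2) x_1^{-m}\, dx_1 dx_2$ up to an error of the form $C \int_A (P + \rho u_1^2)\, dx_1 dx_2$ arising from the transition region. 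The three right-hand side terms are routine: the viscous term by Cauchy--Schwarz together with Proposition \ref{P31} and the finiteness of $\int_\Omega x_1(\psi')^2 \, dx_1 dx_2$ (which uses $\varepsilon_0 > 0$ to handle the near-axis singularity $x_1^{2\varepsilon_0 - 1}$ and $m \geq 6$ for decay at infinity); the $\int u_1 \psi/x_1$ term by Cauchy--Schwarz with the Hardy-type bound $\int u_1^2/x_1 \leq C$ from Proposition \ref{P31}; and the forcing term by \eqref{C115} together with the boundedness of $\psi x_1$. Finally, the $\int_A P$ part of the transition error is absorbed via \eqref{CC503}: since $\omega = x_1^{-2} \geq 1/400$ on $A$, we have $\int_A P \, dx_1 dx_2 \leq 400 M$.

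This gives the desired estimate with indicator-style weights $x_1^{\varepsilon_0}$ on $[0, 10]$ and $x_1^{-m}$ on $[20, \infty)$, which dominate the target $\zeta_0 \cdot x_1^{\varepsilon_0} + \zeta_1 \cdot x_1^{-m}$ everywhere except on the strip $[2, 10]$. There $\zeta_1 = 1$ while $x_1^{-m} \leq 2^{-m}$, so the $P$-contribution $\int_{[2, 10]} P x_1^{-m}\, dx_1 dx_2$ is bounded by $2^{-m} \cdot 100 M$ via \eqref{CC503}, and the $\rho u_1^2$-contribution satisfies $\int_{[2, 10]} \rho u_1^2 x_1^{-m}\, dx_1 dx_2 \leq 2^{-m} \int_{[0, 10]} \rho u_1^2 x_1^{\varepsilon_0}\, dx_1 dx_2$, which is a small fraction of the already-controlled left-hand side when $m \geq 6$, hence absorbable. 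The main obstacle is engineering $\psi$ so that four requirements hold simultaneously: $(\psi x_1)'$ must produce the target pressure weights; $x_1 \psi'$ must be non-negative with matching weights so the convection term moves to the left-hand side with the correct sign; $\psi x_1$ must vanish at both $x_1 = 0$ and $x_1 \to \infty$ so no boundary terms appear; and the auxiliary norms $\int x_1(\psi')^2$ and $\int \psi^2/x_1$ must be finite. The sign alternation in $\psi$ (positive near the axis, negative far away) together with the coefficients $1/(1+\varepsilon_0)$ and $1/(m-1)$ is precisely what makes all four compatible.
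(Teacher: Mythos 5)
Your proof is correct and follows essentially the same strategy as the paper: both test the radial momentum equation \eqref{C301}$_2$ against weighted radial multipliers, integrate by parts to produce the target weights on $P$ and $\rho u_1^2$, and control the remaining terms via Proposition \ref{P31} together with the normalization \eqref{CC503} to bound $\int_A P$. The only cosmetic difference is that you package the near-axis weight $x_1^{\varepsilon_0}$ and the far-field weight $x_1^{-m}$ into a single sign-changing multiplier, whereas the paper uses two separate multipliers ($x_1^{1+\varepsilon_0}\zeta_0(x_1/10)$ and then $x_1^{-m}\zeta_1$, feeding the first estimate into the transition terms of the second); the substance of the argument is identical.
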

\begin{proof}
We mention that the first weight $x_1^{\varepsilon_0}\cdot\zeta_0$ deals with the estimate near the axis, while the second weight $x_1^{-m}\cdot\zeta_1$ deals with the domain away from the axis. 

Multiplying \eqref{C301}$_2$ by $x_1^{1+\varepsilon_0}\cdot\zeta_A$ with $\zeta_A(x_1)\triangleq \zeta_0(x_1/10)$ and integrating over $\Omega$ gives
\begin{equation*}
\begin{split}
&\int_\Omega\big(\varepsilon_0\,\rho u_1^2+(1+\varepsilon_0)P\big)\,x_1^{\varepsilon_0}\cdot\zeta_A\,dx_1dx_2\\
&=\int_\Omega\big(u_1 x_1^{-1+\varepsilon_0}+\varepsilon_0\,\partial_1u_1\,x_1^{\varepsilon_0}\big)\,\zeta_A\,dx_1dx_2
-\int_\Omega\big(\rho u_1^2+P-\partial_1u_1\big)x_1^{1+\varepsilon_0}\cdot\zeta'_A\,dx_1dx_2,\\
&\leq C(\varepsilon_0)\,\left(\int_\Omega\big(\frac{|u_1|^2}{x_1}+|\nabla u_1|^2x_1\big)dx_1dx_2\right)^{\frac{1}{2}}+C\int_A \rho u_1^2\,dx_1dx_2\\
&\leq C\int_A\rho u_1^2\,dx_1dx_2+C,
\end{split}
\end{equation*}
where the last two lines are due to \eqref{CC503} and \eqref{C303}. We therefore obtain that
\begin{equation}\label{C306}
\int_\Omega\big(\rho|u_1|^2+P\big)\,x_1^{\varepsilon_0}\cdot\zeta_A\,dx_1dx_2\leq C\int_A \rho u_1^2\,dx_1dx_2+C.
\end{equation}
Note that $\zeta_0\leq\zeta_A$, \eqref{C306} directly gives the first part of \eqref{C305}.

Similarly, let us multiply \eqref{C301}$_2$ by $x_1^{-m}\cdot\zeta_1$ and deduce that
\begin{equation*}
\begin{split}
\int_\Omega\big(\rho u_1^2+P\big)\,x_1^{-m}\cdot\zeta_1\,dx_1dx_2
&\leq C\int_\Omega(\rho u_1^2+P)\cdot|\zeta'_1|\,dx_1dx_2+C\\
&\leq C\int_A \rho u_1^2\,dx_1dx_2+C,
\end{split}
\end{equation*}
which is due to \eqref{C306} and the fact $|\zeta'_1|\leq C\cdot x_1^{\varepsilon_0}\cdot\zeta_A$ for any $x_1\in[1,2]$.
\end{proof}

Next, we consider $\zeta_1\triangleq(1-\zeta_0)$ which is a truncation away from the axis, then by virtue of \eqref{CC503}, we have
\begin{equation}\label{CC506}
\int_\Omega P\cdot x_1^{-2}\zeta_1\,dx_1dx_2\leq M.
\end{equation}
The key step of obtaining the global compactness assertions on $(0,\infty)\times\mathbb{T}$ is to improve \eqref{CC506} as below.
\begin{proposition}\label{P52}
Under the conditions of Lemma \ref{T12}, there is a constant $C$ depending on $M$, such that
\begin{equation}\label{C507}
\int_\Omega P\cdot x_1^{-9/8} \zeta_1\,dx_1dx_2
+\int_\Omega\rho u_1^2\cdot x_1^{-1/16}\zeta_1\,dx_1dx_2\leq C.
\end{equation}
\end{proposition}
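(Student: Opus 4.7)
The plan is to apply a weighted testing argument directly to the radial momentum equation \eqref{C301}$_2$. Concretely, I would multiply the equation by $x_1^{1-a}\zeta_1(x_1)$ for some $a\in(0,1)$ to be chosen, and integrate over $\Omega$. Since $\zeta_1$ vanishes in a neighbourhood of the axis and the bound \eqref{CC506} forces sufficient decay at infinity, integration by parts in $x_1$ produces, as the dominant contributions, the positive quantity $(1-a)\int_\Omega P\,x_1^{-a}\zeta_1\,dx_1dx_2$ from the pressure and $a\int_\Omega \rho u_1^2\,x_1^{-a}\zeta_1\,dx_1dx_2$ from the convective term. All the viscous and lower-order linear terms are controlled by the basic energy estimate of Proposition \ref{P31} together with the assumption \eqref{C115} on $g$, and the boundary-type terms arising from $\zeta_1'$ live on the bounded set $[1,2]\times\mathbb{T}$, so they are absorbed via Proposition \ref{P32} and \eqref{CC506}. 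This reduces the whole estimate to controlling the weighted kinetic integral of $\rho u_1^2$.

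To bound $\int_\Omega \rho u_1^2\,x_1^{-a}\zeta_1$, I would decompose $u_1=\bar u_1(x_1)+\widetilde u_1$ with $\bar u_1=\int_0^1 u_1\,dx_2$. The slice-average part is handled by the cancellation $\int_0^1\rho u_1\,dx_2=0$ appearing in \eqref{C125}$_1$, so its contribution reduces to a weighted $L^1$ estimate on $\rho$ which is already at hand through \eqref{CC506}; for the zero-mean part $\widetilde u_1$, the modified Sobolev embedding \eqref{C213} on $\mathbb{R}\times\mathbb{T}$ furnishes $\widetilde u_1\in L^q$ for all $q\in[2,\infty)$, with norms controlled by $\|\nabla u\|_{\mathcal{L}^2(\Omega)}$. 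Combining these with H\"older's inequality and the $\mathcal{L}^{1+\eta}_{loc}$-integrability of $\rho$ produced in Section \ref{S3} (cf.\ the interpolation sketched in the introduction following Proposition \ref{P32}), one obtains, after adding the two resulting inequalities and applying Young's inequality to absorb a small multiple of $\int P x_1^{-a}\zeta_1$ into the left, a closed estimate of the sum $\int P\,x_1^{-a}\zeta_1 +\int\rho u_1^2\,x_1^{-b}\zeta_1$. The specific exponents $a=9/8$ and $b=1/16$ emerge from optimizing the H\"older balance subject to two constraints: the weight on the $\rho$-factor must be no weaker than the native $x_1^{-2}$ coming from \eqref{CC506}, and the power of $\widetilde u_1$ must stay within the Sobolev range $q<\infty$ afforded by \eqref{C213}.

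The main obstacle I anticipate is this self-improving, coupled structure: the pressure bound is tied to a kinetic bound, which in turn depends on $L^{1+\eta}$ integrability of $\rho$ that is only slightly better than $L^1$, while the native weight $x_1^{-2}$ from \eqref{CC506} is in fact \emph{stronger} than the target $x_1^{-9/8}$, so the Hölder interpolation must pay some of this decay back in order to accommodate the growth of $\widetilde u_1$ on the unbounded cylinder. Arranging the interpolation so that both bounds close simultaneously, and so that the convective term is actually absorbable rather than merely bounded by a large multiple of the pressure integral, is the delicate point; the fractional exponents $9/8$ and $1/16$ are precisely what this balancing forces.
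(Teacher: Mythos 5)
Your plan to test the radial momentum equation against a decaying weight $x_1^{1-a}\zeta_1$ is indeed the device the paper uses in Step 2 of its proof of Proposition \ref{P52}, and your sign calculus for the pressure and convective contributions is correct. However, there is a genuine gap in how you propose to close the estimate. After integration by parts, the terms generated by $\zeta_1'$ (supported near $x_1\approx 1$) involve $\int_A(\rho u_1^2+P)\,dx_1dx_2$ on a bounded set $A$, and you claim these are absorbed ``via Proposition \ref{P32} and \eqref{CC506}.'' But the conclusion \eqref{C305} of Proposition \ref{P32} is conditional: its right-hand side carries the \emph{uncontrolled} factor $\int_A\rho u_1^2\,dx_1dx_2$, so it is not an a priori bound. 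The first—and decisive—part of the paper's proof (Step 1, \eqref{C510}--\eqref{C517}) breaks exactly this circularity by multiplying the $x_2$-averaged momentum equation \eqref{C509} not by a polynomial weight but by the \emph{nonlinear} test function
$$\psi(x_1)=\Bigl(\int_{x_1}^\infty\int_0^1 P\,s^{-2}\,ds\,dx_2\Bigr)\eta(x_1),$$
which is built out of the solution itself and is well defined thanks to \eqref{CC503}. This produces the square $\int(\int_0^1P\,dx_2)^2x_1^{-2}\eta$ from the pressure and, from the convective term, precisely the \emph{coupled} quadratic quantity $\int(\int_0^1P\,dx_2)(\int_0^1\rho u_1^2\,dx_2)\,x_1^{-2}\eta^2\,dx_1$. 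The cancellation $\int_0^1\rho u_1\,dx_2=0$ together with Poincar\'e and Cauchy--Schwarz in the $x_2$-slice (equation \eqref{C515}) then shows that $\int_A\rho u_1^2$ is controlled by the \emph{square root} of this same coupled quantity, and the estimate closes self-consistently; only after this is the standalone bound \eqref{C517} available to absorb the localization terms in your polynomial-weight computation. Your proposal does not mention the nonlinear test function nor the coupled quadratic estimate, and without them the boundary terms cannot be absorbed.

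The alternative you sketch—H\"older interpolation between $\rho$ and $\widetilde u_1$ using $\mathcal{L}^{1+\eta}_{\mathrm{loc}}$-integrability of $\rho$ together with Sobolev bounds on $\widetilde u_1$—is also circular: the $\mathcal{L}^{1+\eta}$ integrability away from the axis is the content of Proposition \ref{P34}, whose proof relies on \eqref{C334} and \eqref{C342}, which in turn use Proposition \ref{P52}. At the stage where \eqref{C507} is being proved, the only bound on the pressure away from the axis is the weighted $\mathcal{L}^1$ estimate $\int P\,x_1^{-2}\zeta_1\leq M$ from \eqref{CC506}, which is too weak to pair against $\widetilde u_1\in L^q$ for $q<\infty$ by H\"older (the missing $L^\infty$ bound on $\widetilde u_1$ is exactly what the coupled-quadratic trick substitutes for). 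Finally, the exponents $9/8$ and $1/16$ are not the output of a single optimization; they emerge from the paper's three-step bootstrap: first the $x_1^{-2}$-weighted coupled bound from $\psi$, then the $x_1^{-9/8}$ pressure bound from the linear test $x_1^{-1/8}\eta$, and finally a second nonlinear test function $\phi$ built from that $x_1^{-9/8}$ bound to produce the $x_1^{-1/16}$ estimate on $\rho u_1^2$.
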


\begin{proof}
Integrating \eqref{C301}$_1$ in the $x_2$ direction gives
\begin{equation*}
\frac{1}{x_1}\,\partial_1\big(\int_0^1 x_1\,\rho u_1 \,dx_2\big)=0\ \Rightarrow\  x_1\,\int_0^1\rho u_1\,dx_2=C_1.
\end{equation*}
Note that the boundary condition \eqref{C302} enforces $\int\rho u_1\,dx_2\big|_{x_1=0}=0$, thus the constant $C_1$ must be zero and we declare that the cancellation condition holds,
\begin{equation}\label{C317}
\int_0^1\rho u_1(x_1,x_2)\,dx_2=0,~~\forall x_1\in[0,\infty).
\end{equation}

Moreover, integrating \eqref{C301}$_2$ with respect to $x_2$ gives
\begin{equation}\label{C509}
\partial_1(\int_0^1 P\,dx_2)+\frac{1}{x_1}\,\partial_1(\int_0^1 x_1\,\rho u_1^2 \,dx_2)-\frac{1}{x_1}\partial_1(\int_0^1 x_1\,\partial_1u_1\,dx_2)=-\frac{1}{x_1^2}\int_0^1 u_1\,dx_2.
\end{equation}

\textit{Step 1.}
Let us define the truncation $\eta(x_1)=\zeta_1(4x_1)$ and introduce the test function 
$$\psi(x_1)=\big(\int_{x_1}^\infty \int_0^1 P\cdot s^{-2}\, dsdx_2\big)\cdot\eta(x_1),$$
which is well defined and bounded uniformly due to \eqref{CC503}. Multiplying \eqref{C509} by $\psi\cdot\eta$ and integrating with respect to $x_1$, we check each term in details.

With the help of \eqref{CC503}, the first term on the left hand side of \eqref{C509} is handled by
\begin{equation}\label{C510}
\begin{split}
&\int_0^\infty \partial_1\big(\int_0^1 P\,dx_2\big)\cdot\psi\,\eta\,dx_1\\
&=\int_0^\infty\big(\int_0^1 P\,dx_2\big)^2\cdot x_1^{-2}\eta^2\,dx_1-\int_\Omega P\cdot\psi\,\eta'\,dx_1dx_2\\
&\geq\int_0^\infty\big(\int_0^1P\,dx_2\big)^2\cdot x_1^{-2}\eta\,dx_1-C.
\end{split}
\end{equation}

Then, the second term on the left is bounded by
\begin{equation}\label{C511}
\begin{split}
&\int_0^\infty\frac{1}{x_1}\,\partial_1\big(\int_0^1 x_1\,\rho u_1^2 \,dx_2\big)\cdot\psi\,\eta\,dx_1\\
&=\int_0^\infty\big(\int_0^1\rho u_1^2\,dx_2\big)\,\big(\int_0^1 P\,dx_2\big)\cdot x_1^{-2}\eta^2\,dx_1
+\int_0^\infty x_1^{-1} (\int_0^1\rho u_1^2\,dx_2)\cdot\psi\,\eta\,dx_1\\
&\quad-\int_0^\infty\big(\int_0^1\rho u_1^2\,dx_2\big)\cdot\psi\,\eta'\,dx_1\\
&\geq\int_0^\infty\big(\int_0^1\rho u_1^2\,dx_2\big)\cdot\big(\int_0^1 P\,dx_2\big)\cdot x_1^{-2}\eta^2\,dx_1-C\int_A\rho|u|^2\,dx_1dx_2-C.
\end{split}
\end{equation}
Note that the last line is due to \eqref{C305}   and the second positive term on the second line of \eqref{C511} has been dropped out.

Similarly, we also argue that
\begin{equation}\label{C512}
\begin{split}
&\int_0^\infty\frac{1}{x_1}\partial_1\big(\int_0^1 x_1\,\partial_1u_1\,dx_2\big)\cdot\psi\,\eta\,dx_1\\
&=\int_0^\infty(\int_0^1\partial_1u_1\,dx_2)\,(\int_0^1 P\,dx_2)\cdot x_1^{-2}\eta^2\,dx_1+\int_0^\infty x_1^{-1}(\int_0^1\partial_1u_1\,dx_2)\cdot\psi\,\eta\,dx_1\\
&\quad-\int_0^\infty\big(\int_0^1\partial_1u_1\,dx_2\big)\cdot\psi\,\eta'\,dx_1\\
&\leq C\left(\int_\Omega|\partial_1u_1|^2x_1\,dx_1dx_2\right)^{\frac{1}{2}}\left(\int_0^\infty\big(\int_0^1P\,dx_2\big)^2\cdot x_1^{-2}\eta\,dx_1+1\right)^{\frac{1}{2}}\\
&\leq\frac{1}{2}\int_0^\infty\left(\int_0^1 P\,dx_2\right)^2\cdot x_1^{-2}\eta\,dx_1+C.
\end{split}
\end{equation}
We mention that the last line of \eqref{C512} is due to \eqref{C303}.
Moreover, in view of \eqref{C303}, the right hand side of \eqref{C509} is directly bounded by
\begin{equation}\label{C513}
-\int_0^\infty\frac{1}{x_1^2}\left(\int_0^1 u_1\,dx_2\right)\cdot\psi\,\eta\,dx_1\leq C\,\left(\int_\Omega\frac{|u_1|^2}{x_1}\,dx_1dx_2\right)^{\frac{1}{2}}\leq C.
\end{equation}

Thus, by combining \eqref{C510}--\eqref{C513}, we arrive at
\begin{equation}\label{C514}
\begin{split}
\int_0^\infty\big(\int_0^1\rho u_1^2\,dx_2\big)\cdot\big(\int_0^1P\,dx_2\big)\cdot x_1^{-2}\eta^2\,dx_1\leq C\int_A\rho u_1^2\,dx_1dx_2+C.
\end{split}
\end{equation}
In addition, by virtue of \eqref{C303} and \eqref{C317}, we infer that
\begin{equation}\label{C515}
\begin{split}
\int_A\rho u_1^2\,dx_1dx_2
&=\int_{10}^{20}\left(\int_0^1\rho u_1\big(u_1-\int_0^1 u_1\,dx_2\big)\,dx_2\right)\,dx_1\\
&\leq\int_{10}^{20}\left(\int_0^1\rho |u_1|\,dx_2\right)\cdot\left(\int_0^1|\partial_2u_1|\,dx_2\right)\,dx_1\\
&\leq\left(\int_{10}^{20}\big(\int_0^1\rho|u_1|\,dx_2\big)^2x_1^{-1}dx_1\right)^{\frac{1}{2}}
\left(\int _\Omega|\nabla u|^2\,x_1dx_1dx_2\right)^{\frac{1}{2}}\\
&\leq C\,\left(\int_0^\infty\big(\int_0^1 P\,dx_2\big)\cdot\big(\int_0^1\rho u_1^2\,dx_2\big)\cdot x_1^{-2}\eta^2\,dx_1\right)^{\frac{1}{2}}.
\end{split}
\end{equation}
Substituting \eqref{C515} into \eqref{C514} also leads to
\begin{equation}\label{C516}
\begin{split}
\int_0^\infty\big(\int_0^1\rho u_1^2\,dx_2\big)&\cdot\big(\int_0^1 P\,dx_2\big)\cdot x_1^{-2}\eta^2\,dx_1\leq C.\\
\end{split}
\end{equation}
Then, \eqref{C516} together with \eqref{C305} and \eqref{C515} also gives
\begin{equation}\label{C517}
\int_\Omega\big(\rho u_1^2+P\big)\,\big(x_1^{\varepsilon_0}\cdot\zeta_0+x_1^{-m}\cdot\zeta_1\big)\,dx_1dx_2\leq C.
\end{equation}

\textit{Step 2.}
Now, we multiply \eqref{C509} by $(x_1^{-1/8}\eta)$ and repeat the process given in \eqref{C510}--\eqref{C513}. Note that \eqref{CC503} ensures that
\begin{equation*} 
\begin{split}
&\int_0^\infty\partial_1\big(\int_0^1 P\,dx_2\big)\cdot x_1^{-1/8}\eta\,dx_1\\
&=\frac{1}{8}\int_\Omega P\cdot x_1^{-9/8}\eta\,dx_1dx_2
-C\int_\Omega P\cdot x_1^{-1/8}\eta'\,dx_1dx_2\\
&\geq\frac{1}{8}\int_\Omega P\cdot x_1^{-9/8}\eta\,dx_1dx_2-C.
\end{split}
\end{equation*}
Next we apply \eqref{C517} to deduce that
\begin{equation*}
\begin{split}
&\int_0^\infty\frac{1}{x_1}\,\partial_1\big(\int_0^1 x_1\,\rho u_1^2 \,dx_2\big)\cdot x_1^{-1/8}\eta\,dx_1\\
&=\frac{1}{8}\int_\Omega\rho u_1^2\cdot x_1^{-9/8}\eta\,dx_1dx_2-\int_\Omega\rho u_1^2\cdot x_1^{-1/8}\eta'\,dx_1dx_2\\
&\geq\frac{1}{8}\int_\Omega\rho u_1^2\cdot x_1^{-9/8}\eta\,dx_1dx_2-C.
\end{split}
\end{equation*}
Moreover, \eqref{C303} also leads to
\begin{equation*}
\begin{split}
&\int_0^\infty\frac{1}{x_1}\partial_1\big(\int_0^1 x_1\,\partial_1u_1\,dx_2\big)
\cdot x_1^{-1/8}\eta\,dx_1\\
&=\frac{9}{8}\int_0^\infty\big(\int_0^1\partial_1u_1\,dx_2\big)\cdot x_1^{-9/8}\eta\,dx_1-\int_0^\infty\big(\int_0^1\partial_1u_1\,dx_2\big)\cdot x_1^{-1/8}\eta'\,dx_1\\
&\leq C\left(\int_\Omega|\partial_1u_1|^2x_1 dx_1dx_2\right)^{\frac{1}{2}}\leq C,
\end{split}
\end{equation*}
and the estimate
\begin{equation*}
\begin{split}
-\int_0^\infty\frac{1}{x_1^2}\left(\int_0^1 u_1\,dx_2\right)\cdot
x_1^{-1/8}\eta\,dx_1\leq C\left(\int_\Omega\frac{|u_1|^2}{x_1} dx_1dx_2\right)^{\frac{1}{2}}\leq C. 
\end{split}
\end{equation*}
In conclusion, we declare that
\begin{equation}\label{C518}
\int_\Omega\big(P+\rho u_1^2\big)\cdot x_1^{-9/8}\eta\,dx_1dx_2 \leq C,
\end{equation}
which gives the first part of \eqref{C507}.

\textit{Step 3.} Let us introduce another test function
$$\phi(x_1)=\big(\int_{x_1}^\infty\int_0^1 P\cdot s^{-9/8}\,dsdx_2\big)\cdot\eta,$$
which is well defined and bounded uniformly in view of \eqref{C518}. We multiply \eqref{C509} by $\phi\cdot\eta$ and carry out the same calculations as \eqref{C510}--\eqref{C513} to deduce that
\begin{equation}\label{C519}
\int_0^\infty\big(\int_0^1\rho u_1^2\,dx_2\big)\cdot\big(\int_0^1P\,dx_2\big)\cdot x_1^{-9/8}\eta^2\,dx_1\leq C.
\end{equation}
In particular, we apply the method given by \eqref{C515} and check that
\begin{equation*} 
\begin{split}
&\int_\Omega\rho u_1^2\cdot x_1^{-1/16}\eta^2\,dx_1dx_2\\
&\leq\int_0^\infty \big(\int_0^1\rho|u_1|\,dx_2\big)\cdot\big(\int_0^1|\partial_2u_1|\,dx_2\big)\cdot x_1^{-1/16}\eta^2\,dx_1\\
&\leq C\left(\int_0^\infty\big(\int_0^1\rho|u_1|\,dx_2\big)^2 x_1^{-9/8}\eta^2\,dx_1\right)^{\frac{1}{2}}
\left(\int _\Omega|\nabla u|^2x_1dx_1dx_2\right)^{\frac{1}{2}}\\
&\leq C\,\left(\int_0^\infty\big(\int_0^1P\,dx_2\big)\cdot\big(\int_0^1\rho u_1^2\,dx_2\big)\cdot x_1^{-9/8}\eta^2\,dx_1\right)^{\frac{1}{2}},
\end{split}
\end{equation*}
which combined with \eqref{C518} and \eqref{C519} gives \eqref{C507} and finishes the proof.
\end{proof}

As a direct corollary of Propositions \ref{P31}--\ref{P52}, we give upper bound of $P_\infty$ which describes the behaviour of $\rho$ at the far field.
\begin{corollary}\label{C51}
Under the conditions of Lemma \ref{T1}, there is  a constant $C$ depending on $M$, such that $P_\infty\leq C.$
\end{corollary}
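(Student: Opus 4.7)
The plan is to read off $P_\infty$ from the uniform weighted bound \eqref{C507} supplied by Proposition \ref{P52}. Note that the cut-off weight $x_1^{-9/8}\zeta_1$ is nonnegative and supported in $\{x_1\geq 1\}$, and it integrates to a positive finite constant $W_* := \int_\Omega x_1^{-9/8}\zeta_1\,dx_1dx_2$; since $\zeta_1\equiv 1$ on $[2,\infty)$ we have in particular $W_* \geq \int_2^\infty x_1^{-9/8}dx_1 = 8\cdot 2^{-1/8}$. Writing $P = P_\infty + (P - P_\infty)$ and using \eqref{C507}, I would obtain
\begin{equation*}
P_\infty\, W_* \leq C + \bigg|\int_\Omega (P - P_\infty)\, x_1^{-9/8}\,\zeta_1\,dx_1dx_2\bigg|,
\end{equation*}
so the corollary reduces to bounding the last integral uniformly in $\varepsilon$.

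For the error term I would split the region of integration. On the annulus $\{1\leq x_1\leq 2\}$, the weight $x_1^{-9/8}\zeta_1$ is bounded by $1$, and \eqref{CC503} combined with the lower bound $\omega\geq 1/4$ there gives $\int_{\{1\leq x_1\leq 2\}}P\,dx_1dx_2 \leq 4M$; the $P_\infty$-contribution on this bounded set is of order $P_\infty$ with a small prefactor, so it can be absorbed on the left after choosing constants appropriately. On the tail $\{x_1 > 2\}$ the far-field membership $P^\varepsilon - P^\varepsilon_\infty \in \mathcal{L}^{r_0}(\Omega)$ from Lemma \ref{L21} and H\"older's inequality yield
\begin{equation*}
\int_{\{x_1>2\}}|P - P_\infty|\, x_1^{-9/8}\,dx_1dx_2 \leq \|P - P_\infty\|_{\mathcal{L}^{r_0}}\cdot\bigg(\int_2^\infty x_1^{-9r_0'/8 - r_0'/r_0}\,dx_1\bigg)^{1/r_0'},
\end{equation*}
and the last integral converges because $r_0 > 2$ (the exponent $9r_0'/8 + r_0'/r_0 > 1$). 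Combining the annulus and tail contributions with the previous display closes the estimate.

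The main obstacle I expect is the uniform-in-$\varepsilon$ control of $\|P^\varepsilon - P^\varepsilon_\infty\|_{\mathcal{L}^{r_0}}$, which is not spelled out in Lemma \ref{L21}: the lemma only asserts membership in $\mathcal{L}^{r_0}$. One option is to quote the quantitative form of this bound from Lions' construction in \cite[Section 6.8]{L2}. A more self-contained route is a contradiction argument: if $P^{\varepsilon_k}_\infty\to\infty$ along a subsequence, then the far-field convergence $P^{\varepsilon_k}(x_1,\cdot)\to P^{\varepsilon_k}_\infty$ forces $\int_0^1 P^{\varepsilon_k}(x_1,x_2)\,dx_2 \geq P^{\varepsilon_k}_\infty/2$ on an interval $[R_k,\infty)$, so Proposition \ref{P52} yields $C \geq (P^{\varepsilon_k}_\infty/2)\cdot 8R_k^{-1/8}$; combining this with \eqref{CC503} on the complementary region (which also involves $P^{\varepsilon_k}_\infty$ through the tail of $\omega$) produces the required contradiction. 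Either way, once uniform control of the error term is in hand, one immediately concludes $P_\infty \leq C$ with $C = C(M)$.
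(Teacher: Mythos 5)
Your plan reduces the bound $P_\infty\leq C$ to a uniform estimate of $\int_\Omega (P^\varepsilon-P^\varepsilon_\infty)\,x_1^{-9/8}\zeta_1\,dx_1dx_2$, and you candidly flag the obstruction yourself: this estimate needs uniform-in-$\varepsilon$ control of $\|P^\varepsilon-P^\varepsilon_\infty\|_{\mathcal{L}^{r_0}}$, whereas Lemma \ref{L21} asserts only $\varepsilon$-wise membership in $\mathcal{L}^{r_0}$. That gap is real and is not filled by either of your two suggested repairs. The contradiction argument in particular does not close: writing $R_k$ for the radius beyond which $\int_0^1 P^{\varepsilon_k}\,dx_2\geq P^{\varepsilon_k}_\infty/2$, Proposition \ref{P52} only gives $P^{\varepsilon_k}_\infty\lesssim R_k^{1/8}$, and applying \eqref{CC503} on $\{x_1\geq R_k\}$ only gives $P^{\varepsilon_k}_\infty\lesssim R_k$. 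Both are \emph{upper} bounds in terms of $R_k$, and $R_k$ may diverge with $k$ because the far-field approach $P^{\varepsilon_k}(x_1,\cdot)\to P^{\varepsilon_k}_\infty$ has no uniform rate across the family; so neither inequality, nor any combination of them, produces a contradiction. The softer decomposition $P=P_\infty+(P-P_\infty)$ simply does not carry the information needed.

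The paper avoids exactly this problem by not touching $P-P_\infty$ at all. It returns to the slice-averaged radial momentum equation \eqref{C509}, integrates it from $x_1$ to $\infty$ (justified by \eqref{C118}), and thereby \emph{identifies} $P_\infty$ as
\begin{equation*}
P_\infty=\int_0^1\big(P+\rho u_1^2-\partial_1u_1\big)(x_1,x_2)\,dx_2-\int_{x_1}^\infty\int_0^1\Big(\frac{\rho u_1^2}{s}-\frac{\partial_1u_1}{s}+\frac{u_1}{s^2}\Big)\,dsdx_2,
\end{equation*}
where, for $x_1\geq 1$, the tail integrals are uniformly bounded thanks to \eqref{C303} and \eqref{C507}. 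Multiplying the resulting inequality by $\omega$ and integrating over $[2,\infty)$ then bounds the remaining term $\int_2^\infty\omega\int_0^1 P\,dx_2\,dx_1$ directly by $M$ via \eqref{CC503}, and the other boundary terms by \eqref{C303} and \eqref{C507}. That structural use of the radial momentum balance — expressing $P_\infty$ as a quantity evaluated at finite $x_1$ plus controllable tails, rather than as a far-field limit whose rate of approach would have to be quantified — is the missing idea; without some analogue of it, the route you outline cannot be completed.
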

\begin{proof}
Note that the regularity condition \eqref{C118} ensures that we can integrate \eqref{C509} over $[x_1,\infty)$ to deduce that
\begin{equation*}
\begin{split}
&P_\infty-\int_0^1\big(P+\rho u_1^2-\partial_1u_1\big)(x_1,x_2)\,dx_2\\
&=-\int_{x_1}^\infty\int_0^1\big(\rho u_1^2\cdot s^{-1}-\partial_1u_1\cdot s^{-1}+u_1\cdot s^{-2}\big)\,dsdx_2,
\end{split}
\end{equation*}
which combined with \eqref{C303} and \eqref{C507} yields that, for $x_1\geq 1$
\begin{equation*}
P_\infty\leq \int_0^1\big(P+\rho u_1^2+|\partial_1u_1|\big)(x_1,x_2)\,dx_2+C.
\end{equation*}
Let us multiply the above inequality by $\omega$ and integrate over $[2,\infty)$, then we make use of \eqref{C116}, \eqref{CC503}, \eqref{C303}, and \eqref{C507} to derive that
\begin{equation*}
P_\infty\leq C.
\end{equation*}
The proof is therefore completed.
\end{proof}

Now, we turn to the most difficult part which deals with the estimates of $P-P_\infty$. The sign of $P-P_\infty$ is undetermined, thus we must carefully analysis the behaviour of such term as $x_1\rightarrow\infty$.

\begin{proposition}
Under the conditions of Theorem \ref{T12}, for any $x_1\geq 3/2$, there is a constant $C$ independent of $x_1$, such that 
\begin{equation}\label{C520}
\begin{split}
&\bigg|\int_{x_1}^\infty \int_0^1\big(P-P_\infty\big)\cdot s^{-1/2}\,dsdx_2\bigg|\\
&\leq C\cdot x_1^{3/4}\left(\int_0^\infty\big(\int_0^1(P-P_\infty)\,dx_2\big)^2x_1^{-1/2}\,\zeta_1^2\,dx_1+1\right)^{\frac{1}{2}}.
\end{split}
\end{equation}
\end{proposition}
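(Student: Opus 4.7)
The plan is to split the integral at the auxiliary radius $R=x_1^3$ and to estimate the two resulting pieces separately. Introduce the shorthand $\overline{g}(s):=\int_0^1 g(s,x_2)\,dx_2$ for the $x_2$-average, and set $I:=\int_0^\infty (\overline{P-P_\infty})^2 s^{-1/2}\zeta_1^2\,ds$, so that \eqref{C520} amounts to $|F(x_1)|\leq Cx_1^{3/4}(I+1)^{1/2}$ with $F(x_1)=\int_{x_1}^\infty (\overline{P-P_\infty})(s)\,s^{-1/2}\,ds$. Decompose $F(x_1)=F_1(x_1)+F_2(x_1)$ where $F_1=\int_{x_1}^R$ and $F_2=\int_R^\infty$.

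For the inner piece $F_1$, note that whenever $s\geq x_1\geq 3/2$ one has $\zeta_1(s)\geq\zeta_1(3/2)>0$, so $\zeta_1^{-2}$ is uniformly bounded on $[x_1,R]$. Cauchy--Schwarz therefore produces
\[
|F_1(x_1)|^2\leq \Big(\int_{x_1}^R (\overline{P-P_\infty})^2 s^{-1/2}\zeta_1^2\,ds\Big)\cdot\Big(\int_{x_1}^R s^{-1/2}\zeta_1^{-2}\,ds\Big)\leq CI\cdot 2(\sqrt{R}-\sqrt{x_1})\leq CI\,x_1^{3/2},
\]
so $|F_1(x_1)|\leq C\sqrt{I}\,x_1^{3/4}$. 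The choice $R=x_1^3$ is tuned precisely so that $\sqrt{R}-\sqrt{x_1}$ contributes the desired factor $x_1^{3/4}$.

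For the outer piece $F_2$, I would exploit the averaged pressure equation \eqref{C509}. Integrating it from $s$ to $+\infty$ (and justifying the vanishing boundary terms at infinity via a smooth cutoff $\eta_N$ together with the integrability \eqref{C118}) yields the representation
\[
\overline{P-P_\infty}(s)=-\overline{\rho u_1^2}(s)+\overline{\partial_1u_1}(s)+\int_s^\infty\tau^{-1}\overline{\rho u_1^2}\,d\tau-\int_s^\infty\tau^{-1}\overline{\partial_1u_1}\,d\tau+\int_s^\infty\tau^{-2}\overline{u_1}\,d\tau.
\]
Substituting into $F_2$, switching the order of integration in the double integrals via Fubini, and using the pointwise bound $s^{-1/2}\leq R^{-7/16}s^{-1/16}$ for $s\geq R\geq(3/2)^3$ to invoke \eqref{C507} on the $\rho u_1^2$ contributions, together with Cauchy--Schwarz against the energy bound \eqref{C303} on the $\partial_1 u_1$ and $u_1$ pieces, each term is bounded by a universal constant. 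Hence $|F_2(x_1)|\leq C$, and combining gives $|F(x_1)|\leq C\sqrt{I}\,x_1^{3/4}+C\leq Cx_1^{3/4}(I+1)^{1/2}$.

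The main technical obstacle is the derivation of the representation formula above, specifically the vanishing of the boundary contribution at infinity. This requires a smooth cutoff $\eta_N$ vanishing outside $[0,2N]$; the $\eta_N'$-term must be shown to tend to zero along a suitable sequence $N_k\to\infty$, which I would establish by H\"older's inequality on the dyadic annulus $[N,2N]$ combined with the tail of \eqref{C118}. If the exponent $r_0>2$ supplied by \eqref{C118} happens to exceed $4$, one first interpolates against the local $L^\infty$ bound \eqref{C203} (which gives $P^\varepsilon\in L^\infty_{loc}$ via $P^\varepsilon=\rho^\varepsilon+\varepsilon(\rho^\varepsilon)^\alpha$) to reduce to an exponent in $(2,4]$, for which the decay is immediate.
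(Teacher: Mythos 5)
Your decomposition is genuinely different from the paper's. The paper tests the averaged equation \eqref{C509} against $s^{1/2}\zeta_{x_1}(s)$ with $\zeta_{x_1}(s)=\zeta_1(s-x_1)$: after integration by parts, the principal term is (essentially) the target quantity $\int(\overline{P-P_\infty})s^{-1/2}\zeta_{x_1}\,ds$, the convection and viscous terms are controlled by \eqref{C507} and \eqref{C303} exactly as in your $F_2$-analysis, and the \emph{only} place where the integral $I$ enters is the cutoff error $\int(\overline{P-P_\infty})s^{1/2}\zeta_{x_1}'\,ds$, which is localized on an interval of length~$1$ near $s=x_1$ and is Cauchy--Schwarz'd to give $x_1^{3/4}\sqrt{I}$. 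You instead split at $R=x_1^3$, use Cauchy--Schwarz against $I$ on the full middle range $[x_1,R]$ (where the $s^{-1/2}$ weight compensates for the interval length, again yielding $\sqrt{R}\sim x_1^{3/2}$), and use the integrated representation formula only on $[R,\infty)$. Both routes extract the same exponent $3/4$, but yours produces it from a global weighted integral rather than a local boundary term; your $F_2$ tail estimates and Fubini manipulations are all sound and rely on the same a priori bounds \eqref{C303}, \eqref{C507}, \eqref{C118} as the paper's own Corollary~\ref{C51}.

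One genuine (though easily repaired) gap: the inequality $\zeta_1(s)\ge\zeta_1(3/2)>0$ for $s\ge 3/2$ is not guaranteed by the definition \eqref{C382}, which only fixes $\zeta_0=1$ on $[0,1]$, $\zeta_0=0$ on $[2,\infty)$ and $0\le\zeta_0\le1$; a legitimate choice has $\zeta_0\equiv1$ on $[0,3/2]$, making $\zeta_1(3/2)=0$ and your factor $\zeta_1^{-2}$ in the Cauchy--Schwarz unbounded. The fix is to split $F_1$ once more at $s=2$: on $[2,R]$ one has $\zeta_1\equiv1$ so Cauchy--Schwarz goes through with no weight, yielding $|{\int_2^R}|\le C\sqrt{I}\,x_1^{3/4}$, while on the short interval $[x_1,2]$ (nonempty only for $x_1\in[3/2,2]$) the pressure term is bounded directly by $\int_\Omega P\,\omega\,dx_1dx_2=M$ and Corollary~\ref{C51}, giving an $O(1)$ contribution. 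With that patch the argument is correct.
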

\begin{remark}
We mention that the regularity conditions \eqref{C203}--\eqref{C118} are sufficient to ensure that the crucial term
\begin{equation}\label{crucial}
\int_0^\infty\big(\int_0^1(P-P_\infty)\,dx_2\big)^2x_1^{-1/2}\,\zeta_1^2\,dx_1
\end{equation}
is finite $($see \cite[Section 6.8]{L2}$)$. However, the upper bound of \eqref{crucial} still depends on $\varepsilon$ at the present stage. 
\end{remark}
\begin{proof}
We introduce the truncation function $\zeta_{x_1}(s)=\zeta_1(s-x_1)$ for $x_1\geq 0$ and write \eqref{C509} in the variable $s$,
\begin{equation}\label{C521}
-\partial_s\int_0^1(P-P_\infty)\,dx_2-\frac{1}{s}\,\partial_s\int_0^1 s\,\rho u_1^2 \,dx_2+\frac{1}{s}\,\partial_s\int_0^1s\,\partial_1u_1\,dx_2=\frac{1}{s^2}\int_0^1u_1\,dx_2.
\end{equation}
Multiplying \eqref{C521} by $s^{1/2}\zeta_{x_1}(s)$ and integrating with respect to $s$, we check each term in details.

The first term on the left hand side of \eqref{C521} is treated by
\begin{equation}\label{C522}
\begin{split}
&-\int_0^\infty\partial_s\big(\int_0^1(P-P_\infty)\,dx_2\big)\cdot s^{1/2}\zeta_{x_1}\,ds\\
&=\frac{1}{2}\int_\Omega(P-P_\infty)\cdot s^{-1/2}\zeta_{x_1}\,dsdx_2+\int_\Omega(P-P_\infty)\cdot s^{1/2}\zeta_{x_1}'\,dsdx_2.\\
\end{split}
\end{equation}
We specially mention that the regularity condition \eqref{C118} guarantees that the integration by parts in the second line of \eqref{C522} is available.  

Then we apply \eqref{C507} to deduce that the second term is bounded by
\begin{equation}\label{C523}
\begin{split}
&\bigg|\int_0^\infty\frac{1}{s}\,\partial_s\big(\int_0^1 s\,\rho u_1^2 \,dx_2\big)\cdot s^{1/2}\zeta_{x_1}\,ds\bigg|\\
&\leq\bigg|\frac{1}{2}\int_\Omega\rho u_1^2\cdot s^{-1/2}\zeta_{x_1}\,dsdx_2\bigg|+\bigg|\int_\Omega\rho u_1^2\cdot s^{1/2}\zeta_{x_1}'\,dsdx_2\bigg|\\
&\leq C\int_\Omega\rho u_1^2\cdot s^{-1/16}\zeta_{x_1}\,dsdx_2+\int_\Omega\rho u_1^2\cdot s^{1/2}|\zeta_{x_1}'|\,dsdx_2\\
&\leq C\left(1+x_1^{9/16}\right)\int_\Omega\rho u_1^2\cdot s^{-1/16}\zeta_{x_1}\,dsdx_2\leq C\left(1+x_1^{9/16}\right).
\end{split}
\end{equation}
To handle the remaining terms, we utilize \eqref{C303} to deduce that
\begin{equation}\label{C524}
\begin{split}
&\bigg|\int_0^\infty\frac{1}{s^2}\big(\int_0^1 u_1\,dx_2\big)\cdot s^{1/2}\zeta_{x_1}\,ds\bigg|\leq C\left(\int_\Omega\frac{|u_1|^2}{x_1}\,dx_1dx_2\right)^{\frac{1}{2}}\leq C.\\
\end{split}
\end{equation}
Moreover, we also have
\begin{equation}\label{C545}
\begin{split}
&\bigg|\int_0^\infty\frac{1}{s}\,\partial_s\big(\int_0^1s\,\partial_1u_1\,dx_2\big)\cdot s^{1/2}\zeta_{x_1}\,ds\bigg|\\
&\leq\bigg|\frac{1}{2}\int_\Omega\partial_1u_1\cdot s^{-1/2}\zeta_{x_1}\,dsdx_2\bigg|+\bigg|\int_\Omega\partial_1u_1\cdot s^{1/2}\zeta_{x_1}'\,dxdx_2\bigg|\\
&\leq C\left(\int_\Omega|\nabla u_1|^2x_1\,dx_1dx_2\right)^{\frac{1}{2}}\leq C,
\end{split}
\end{equation}
where the last line is due to $|\mathrm{supp}\,\zeta_{x_1}'|\leq 1$.
Combining \eqref{C522}--\eqref{C545}, we arrive at
\begin{equation}\label{C525}
\begin{split}
&\frac{1}{2}\bigg|\int_\Omega(P-P_\infty)\cdot s^{-1/2}\zeta_{x_1}\,dsdx_2\bigg|\\
&\leq C\big(1+x_1^{9/16}\big)+\bigg|\int_\Omega(P-P_\infty)\cdot s^{1/2}\zeta_{x_1}'\,dsdx_2\bigg|.
\end{split}
\end{equation}

There are two cases we must distinguish. If $1\leq x_1\leq 10$, with the help of \eqref{CC503} and Corollary \ref{C51}, we directly declare that
\begin{equation*}
\bigg|\int_\Omega(P-P_\infty)\cdot s^{1/2}\zeta_{x_1}'\,dsdx_2\bigg|\leq C.
\end{equation*}
While for the case $x_1>10$, we check that
\begin{equation*}
\begin{split}
&\bigg|\int_\Omega(P-P_\infty)\cdot s^{1/2}\zeta_{x_1}'\,dsdx_2\bigg|\\
&\leq\left(\int_0^\infty\big(\int_0^1(P-P_\infty)\,dx_2\big)^2x_1^{-1/2}\,|\zeta_{x_1}'|\,dx_1\right)^{\frac{1}{2}}\cdot\left(\int_0^\infty s^{3/2}|\zeta_{x_1}'|\,ds\right)^{\frac{1}{2}}\\
&\leq C\,x_1^{3/4}\left(\int_0^\infty\big(\int_0^1(P-P_\infty)\,dx_2\big)^2x_1^{-1/2}\,\zeta_1^2\,dx_1\right)^{\frac{1}{2}}.
\end{split}
\end{equation*}
Substituting these estimates into \eqref{C525}, we arrive at
\begin{equation}\label{C526}
\begin{split}
&\bigg|\int_\Omega\big(P-P_\infty\big)\cdot s^{-1/2}\zeta_{x_1}\,dsdx_2\bigg|\\
&\leq C\cdot x_1^{3/4}\left(\int_0^\infty\big(\int_0^1(P-P_\infty)\,dx_2\big)^2x_1^{-1/2}\,\zeta_1^2\,dx_1+1\right)^{\frac{1}{2}}.
\end{split}
\end{equation}
Note that for $x_1\geq 3/2$, we have
\begin{equation*}
\begin{split}
&\bigg|\int_{x_1}^\infty \int_0^1\big(P-P_\infty\big)\cdot s^{-1/2}\,dsdx_2\bigg|\\
&\leq\bigg|\int_\Omega\big(P-P_\infty\big)\cdot s^{-1/2}\zeta_{x_1}\,dsdx_2\bigg|+\bigg|\int_{x_1}^\infty\int_0^1(P-P_\infty) \cdot s^{-1/2}(1-\zeta_{x_1})\,dsdx_2\bigg|\\
&\leq\bigg|\int_\Omega\big(P-P_\infty\big)\cdot s^{-1/2}\zeta_{x_1}\,dsdx_2\bigg|+C\,x_1^{5/8}\int_{x_1}^\infty\int_0^1(P+P_\infty)\cdot s^{-9/8}\,dsdx_2,
\end{split}
\end{equation*}
which combined with \eqref{C507}, \eqref{C526} and Corollary \ref{C51} gives 
\begin{equation*} 
\begin{split}
&\bigg|\int_{x_1}^\infty \int_0^1\big(P-P_\infty\big)\cdot s^{-1/2}\,dsdx_2\bigg|\\
&\leq C\cdot x_1^{3/4}\left(\int_0^\infty\big(\int_0^1(P-P_\infty)\,dx_2\big)^2x_1^{-1/2}\,\zeta_1^2\,dx_1+1\right)^{\frac{1}{2}}.
\end{split}
\end{equation*} 
The proof is therefore completed.
\end{proof}

Then we establish the necessary estimates of the axial component $u_2$, which make full use of the structure of the equations \eqref{C301}. As a direct corollary, we also obtain the proper (weighted) estimates on $\|u\|_{L^2(\Omega)}$.
\begin{proposition}\label{P35}
Under the conditions of Lemma \ref{T1}, there is a constant $C$ such that $\forall x_1\geq 2$, we have
\begin{equation}\label{C316}
\bigg|\int_0^1 u_2(x_1,x_2)\,dx_2\bigg|\leq C.
\end{equation}
In particular, we argue that
\begin{equation}\label{C334}
\int_{2}^{\infty}\int_0^1 \big(|u|^2+|\nabla u|^2)x_1^{-2}dx_1dx_2\leq C.
\end{equation}
\end{proposition}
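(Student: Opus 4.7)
The plan is to implement the cancellation strategy sketched around \eqref{C125}--\eqref{C126}. First, I would integrate the axial momentum equation \eqref{C301}$_3$ with respect to $x_2$ over $[0,1]$. Periodicity kills the $\partial_2(\rho u_2^2)$ and $\partial_{22}u_2$ contributions, and the cancellation \eqref{C112} eliminates $g_2$, leaving $\partial_1[x_1\int_0^1(\rho u_1 u_2-\partial_1 u_2)\,dx_2]=0$. Evaluating at $x_1=0$ with the boundary conditions $u_1|_{\{x_1=0\}}=\partial_1 u_2|_{\{x_1=0\}}=0$ forces the integration constant to vanish, so $\partial_1\overline{u_2}(x_1)=\int_0^1\rho u_1 u_2\,dx_2$ where $\overline{u_2}(x_1):=\int_0^1 u_2(x_1,x_2)\,dx_2$. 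Integrating from $x_1$ to $\infty$ and using the far-field decay $u\to 0$ from \eqref{C302} produces
\begin{equation*}
\overline{u_2}(x_1)=-\int_{x_1}^\infty\int_0^1\rho u_1 u_2\,ds\,dx_2.
\end{equation*}

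Next I would invoke the cancellation $\int_0^1\rho u_1\,dx_2=0$ from \eqref{C317}, which allows subtraction of $\overline{u_2}(s)$ inside the integrand at no cost, giving
\begin{equation*}
\overline{u_2}(x_1)=-\int_{x_1}^\infty\int_0^1\rho u_1(u_2-\overline{u_2})\,ds\,dx_2.
\end{equation*}
The pointwise Poincar\'e bound $|u_2(s,\cdot)-\overline{u_2}(s)|\leq\int_0^1|\partial_2 u_2(s,y)|\,dy$ on $\mathbb{T}$, followed by Cauchy-Schwarz in $s$ with weights $(s^{-1},s)$ and Cauchy-Schwarz in $x_2$ on the $\partial_2 u_2$ factor, then delivers
\begin{equation*}
|\overline{u_2}(x_1)|\leq\left(\int_{x_1}^\infty\left(\int_0^1\rho|u_1|\,dx_2\right)^2 s^{-1}\,ds\right)^{1/2}\|\partial_2 u_2\|_{\mathcal{L}^2(\Omega)},
\end{equation*}
the second factor being bounded by Proposition \ref{P31}. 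The problem is thus reduced to proving $\int_2^\infty(\int_0^1\rho|u_1|\,dx_2)^2 s^{-1}\,ds\leq C$.

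This weighted bound is the main obstacle. Cauchy-Schwarz in $x_2$ gives $(\int\rho|u_1|)^2\leq(\int P)(\int\rho u_1^2)$ since $P\geq\rho$, so the task becomes to establish $\int_2^\infty(\int_0^1 P\,dx_2)(\int_0^1\rho u_1^2\,dx_2)\,s^{-1}\,ds\leq C$. My approach is to revisit the test-function argument from Step 3 of Proposition \ref{P52}, using the uniformly bounded multiplier
\begin{equation*}
\widetilde\phi(x_1):=\Big(\int_{x_1}^\infty\int_0^1\rho u_1^2\cdot s^{-1/16}\,ds\,dx_2\Big)\eta(x_1),
\end{equation*}
whose boundedness is supplied by \eqref{C507}. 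Multiplying \eqref{C509} by $\widetilde\phi\cdot\eta$ and performing integration by parts, the pressure-derivative term generates a contribution of the form $\int(\int P-P_\infty)(\int\rho u_1^2)s^{-1/16}\eta^2\,ds$; combining this with Corollary \ref{C51} (to handle the $P_\infty$ piece), \eqref{C518}, and the bootstrap absorption scheme used in \eqref{C515}--\eqref{C516}, one closes the estimate.

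Finally, \eqref{C334} follows from \eqref{C316} by a routine decomposition. Writing $u_2=\overline{u_2}+(u_2-\overline{u_2})$, the mean contributes at most $C^2\int_2^\infty x_1^{-2}\,dx_1<\infty$; by Poincar\'e on $\mathbb{T}$, $\int_0^1(u_2-\overline{u_2})^2\,dx_2\leq C\int_0^1|\partial_2 u_2|^2\,dx_2$, so the fluctuation part contributes at most $C\int_2^\infty\int_0^1|\partial_2 u_2|^2 x_1^{-2}\,dx_1dx_2$, which, using $x_1^{-2}\leq x_1/8$ for $x_1\geq 2$, is bounded by $(C/8)\|\nabla u\|_{\mathcal{L}^2(\Omega)}^2$ via Proposition \ref{P31}. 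The $u_1$ and $|\nabla u|^2$ contributions are handled similarly via $x_1^{-2}\leq(1/2)x_1^{-1}$ and $x_1^{-2}\leq x_1/8$ for $x_1\geq 2$, together with the energy bound \eqref{C303}.
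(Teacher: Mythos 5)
Your overall scaffolding is exactly the paper's: the cancellation conditions from \eqref{C301}$_1$ and \eqref{C301}$_3$, the explicit formula $\overline{u_2}(x_1)=-\int_{x_1}^\infty\int_0^1\rho u_1u_2$, the insertion of $\int_0^1\rho u_1\,dx_2=0$ to replace $u_2$ by $u_2-\overline{u_2}$, the weighted Cauchy--Schwarz producing $\bigl(\int_{x_1}^\infty(\int_0^1\rho|u_1|)^2 s^{-1}ds\bigr)^{1/2}$, the further reduction via $(\int\rho|u_1|)^2\leq(\int P)(\int\rho u_1^2)$, and the final derivation of \eqref{C334} from \eqref{C316} via Poincar\'e plus the crude inequalities $x_1^{-2}\leq x_1/8$, $x_1^{-2}\leq x_1^{-1}/2$ on $[2,\infty)$. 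These reproduce \eqref{C317}--\eqref{C320}, \eqref{CC308}, and the closing paragraph of the paper's proof verbatim.

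Where you genuinely diverge is in the test-function argument that controls $\int(\int P)(\int\rho u_1^2)\,s^{-\alpha}$. The paper multiplies the $x_2$-averaged radial momentum balance by $\xi\zeta_1$ with $\xi=\bigl(\int_{x_1}^\infty\int(P-P_\infty)s^{-1/2}\bigr)\zeta_1$; the pressure-gradient term then generates the coercive quantity $\int(\int(P-P_\infty))^2 x_1^{-1/2}\zeta_1^2$ and the convective term produces the wanted cross product. Because $\xi$ is not a priori uniformly bounded, the paper must first establish the growth estimate \eqref{C520} in a separate proposition, and the remaining terms \eqref{C532}--\eqref{C540} are all absorbed into that coercive pressure square. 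You instead choose the ``dual'' multiplier $\widetilde\phi=\bigl(\int_{x_1}^\infty\int\rho u_1^2\,s^{-1/16}\bigr)\eta$, which is uniformly bounded directly from \eqref{C507} (plus \eqref{C342} near $x_1\in[1/4,1]$). Under this choice the convective term becomes the coercive quantity $A'=\int(\int\rho u_1^2)^2 x_1^{-1/16}\eta^2$ while the pressure-gradient term yields the wanted cross product, the viscous cross term is absorbed into $A'$ via Cauchy--Schwarz with the weight split $x_1^{-1/16}=x_1^{1/2}\cdot x_1^{-9/16}$ and $x_1^{-9/8}\leq Cx_1^{-1/16}$ on $\mathrm{supp}\,\eta$, and the commutator terms in $\eta\eta'$ live on $[1/4,1/2]$ where \eqref{C342} and Corollary~\ref{C51} suffice. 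This buys you a real simplification: the entire growth estimate \eqref{C520} and the proposition proving it become unnecessary for \eqref{C316}, since your multiplier needs no a priori growth control. The one technical point you leave implicit, as the paper also does in its Remark, is that the coercive quantity ($A'$ for you, the ``crucial'' integral for the paper) must be known finite for the approximation solution before the absorption is legitimate; for $A'$ this requires the same kind of justification from \eqref{C203}--\eqref{C118}, or a further large-$x_1$ truncation followed by a limit.
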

\begin{proof}
Recalling that
integrating \eqref{C301}$_1$ in the $x_2$ direction leads to the cancellation condition \eqref{C317}
\begin{equation*} 
\int_0^1\rho u_1(x_1,x_2)\,dx_2=0,~~\forall x_1\in[0,\infty).
\end{equation*}

Similarly, we integrate \eqref{C301}$_3$ in the $x_2$ direction and arrive at
\begin{equation*}
\frac{1}{x_1}\, \partial_1\left(\int_0^1 x_1\,\big(\rho u_1u_2-\partial_1u_2\big)\,dx_2\right)=0\ \Rightarrow\ x_1 \int_0^1\big(\rho u_1u_2-\partial_1u_2\big)\, dx_2=C_2.
\end{equation*}
Once more, the boundary condition \eqref{C302} guarantees $\int_0^1(\rho u_1u_2-\partial_1u_2)\,dx_2\big|_{x_1=0}=0$, as a result, the constant $C_2$ must be 0 and we argue that
\begin{equation}\label{C318}
\partial_1\big(\int_0^1 u_2\,dx_2\big)=\int_0^1\rho u_1u_2\,dx_2,~~\forall x_1\in[0,\infty).
\end{equation}

For $x_1\geq 2$, integrating \eqref{C318} over $[x_1,\infty)$ yields that
\begin{equation}\label{C319}
\bigg|\int_0^1 u_2(x_1,x_2)\,dx_2\bigg|=\bigg|\int_{x_1}^\infty\int_0^1\rho u_1u_2(s,x_2)\,dsdx_2\bigg|,
\end{equation}
since the boundary condition \eqref{C302} ensures that $u_2\rightarrow 0$ as $x_1\rightarrow\infty$. Substituting \eqref{C317} into the right hand side of \eqref{C319} implies that
\begin{equation}\label{C320}
\begin{split}
&\bigg|\int_{x_1}^\infty\int_0^1\rho u_1u_2 \,dsdx_2\bigg|\\
&=\bigg|\int_{x_1}^\infty\int_0^1\rho u_1\,\big(u_2-\int_0^1 u_2\,dx_2\big)\,dsdx_2\bigg|\\
&\leq\int_{x_1}^\infty\big(\int_0^1\rho|u_1|\,dx_2\big)\cdot\big(\int_0^1|\partial_2u_2|\,dx_2\big)\,ds\\
&\leq\big(\int_{x_1}^\infty\big(\int_0^1\rho|u_1|\,dx_2\big)^2s^{-1}ds\big)^{\frac{1}{2}}
\,\big(\int_\Omega|\nabla u|^2x_1dx_1dx_2\big)^{\frac{1}{2}}\\
&\leq C\,\left(\int_{x_1}^\infty\big(\int_0^1 P\,dx_2\big)\cdot\big(\int_0^1\rho u_1^2\,dx_2\big)\cdot s^{-1}ds\right)^{\frac{1}{2}},
\end{split}
\end{equation}
where the last line is due to \eqref{C303}. We consider $\eta_1(x_1)\triangleq\zeta_1(4x_1)$, then the term in the last line of \eqref{C320} is bounded by
\begin{equation}\label{CC308}
\begin{split}
&\int_{x_1}^\infty\big(\int_0^1 P\,dx_2\big)\cdot\big(\int_0^1\rho u_1^2\,dx_2\big)\cdot s^{-1}ds\\
&\leq\int_0^\infty\big(\int_0^1P\,dx_2\big)\cdot\big(\int_0^1\rho u_1^2\,dx_2\big)\cdot x_1^{-1/2}\zeta_1^2\,dx_1\\
&=\int_0^\infty\big(\int_0^1(P-P_\infty)\,dx_2\big)\,\big(\int_0^1\rho u_1^2\,dx_2\big)\, x_1^{-1/2}\zeta_1^2\,dx_1\\
&\qquad+P_\infty \int_\Omega\rho u_1^2\, x_1^{-1/2}\zeta_1^2\,dx_1dx_2,
\end{split}
\end{equation}

To handle \eqref{CC308}, we first introduce the test function
$$\xi(x_1)=\big(\int_{x_1}^\infty\int_0^1(P-P_\infty)\cdot s^{-1/2}\,dsdx_2\big)\cdot\zeta_1(x_1),$$
which is well defined due to \eqref{C525} and write \eqref{C521} in the variable $x_1$,
\begin{equation}\label{C529}
\begin{split}
&\partial_1\int_0^1(P-P_\infty)\,dx_2+\frac{1}{x_1}\, \partial_1(\int_0^1 x_1 \rho u_1^2 \,dx_2)-\frac{1}{x_1}\,\partial_1(\int_0^1x_1 \partial_1u_1\,dx_2)\\
&=-\frac{1}{x_1^2}\int_0^1u_1\,dx_2.
\end{split}
\end{equation}

Multiplying \eqref{C529} by $\xi\cdot\zeta_1$ and integrating with respect to $x_1$, we check each terms in details.
The first term on the left hand side of \eqref{C529} is given by
\begin{equation}\label{C530}
\begin{split}
&\int_0^\infty\partial_1\big(\int_0^1(P-P_\infty)\,dx_2\big)\,\xi\cdot\zeta_1\,dx_1\\
&=\int_0^\infty\left(\int_0^1(P-P_\infty)\,dx_2\right)^2x_1^{-1/2}\zeta_1^2\,dx_1-\int_0^\infty\big(\int_0^1(P-P_\infty)\,dx_2\big)\,\xi\cdot\zeta_1'\,dx_1\\
&\geq\int_0^\infty\left(\int_0^1(P-P_\infty)\,dx_2\right)^2x_1^{-1/2}\zeta_1^2\,dx_1-C,
\end{split}
\end{equation}
where the last line is due to \eqref{CC503}, \eqref{C520}, Corollary \ref{C51}, and the fact $\mathrm{supp}\,\zeta_1'\subset[1,2]\times\mathbb{T}$.

Then we consider the second term of \eqref{C529}.
\begin{equation}\label{C531}
\begin{split}
&\int_0^\infty\frac{1}{x_1}\,\partial_1\big(\int_0^1 x_1\,\rho u_1^2 \,dx_2\big)\,\xi\cdot\zeta_1\,dx_1\\
&=\int_0^\infty x_1^{-1}\big(\int_0^1\rho u_1^2\,dx_2\big)\,\xi\cdot\zeta_1\,dx_1-\int_0^\infty\big(\int_0^1\rho u_1^2\,dx_2\big)\,\xi\cdot\zeta_1'\,dx_1\\
&\quad+\int_0^\infty\big(\int_0^1\rho u_1^2\,dx_2\big)\cdot\big(\int_0^1(P-P_\infty)\,dx_2\big)\cdot x_1^{-1/2}\zeta_1^2\,dx_1.\\
\end{split}
\end{equation}
Let us apply \eqref{C507} and \eqref{C520} to check the first term on the second line of \eqref{C531}.
\begin{equation}\label{C532}
\begin{split}
&-\int_0^\infty x_1^{-1}\big(\int_0^1\rho u_1^2\,dx_2\big)\,\xi\cdot\zeta_1\,dx_1\\
&\leq C\big(\int_\Omega\rho u_1^2\cdot x_1^{-1/4}\zeta_1\,dx_1
dx_2\big)\,\big(\int_0^\infty\big(\int_0^1(P-P_\infty)\,dx_2\big)^2x_1^{-1/2}\,\zeta_1^2\,dx_1+1\big)^{\frac{1}{2}}\\
&\leq \frac{1}{8}\int_0^\infty\big(\int_0^1(P-P_\infty)\,dx_2\big)^2x_1^{-1/2}\,\zeta_1^2\,dx_1+C.
\end{split}
\end{equation}
While the fact $\mathrm{supp}\,\zeta_1'\subset[1,2]\times\mathbb{T}$ together with \eqref{C507} and \eqref{C520} also gives
\begin{equation}\label{C533}
\int_0^\infty\big(\int_0^1\rho u_1^2\,dx_2\big)\,\xi\cdot\zeta_1'\,dx_1\leq C.
\end{equation}
Substituting \eqref{C532} and \eqref{C533} into \eqref{C531}, we arrive at
\begin{equation}\label{C534}
\begin{split}
&\int_0^\infty\frac{1}{x_1}\,\partial_1\big(\int_0^1 x_1\,\rho u_1^2 \,dx_2\big)\,\xi\cdot\zeta_1\,dx_1\\
&\geq\int_0^\infty\big(\int_0^1\rho u_1^2\,dx_2\big)\cdot\big(\int_0^1(P-P_\infty)\,dx_2\big)\cdot x_1^{-1/2}\zeta_1^2\,dx_1\\
&\quad-\frac{1}{8}\int_0^\infty\big(\int_0^1(P-P_\infty)\,dx_2\big)^2x_1^{-1/2}\,\zeta_1^2\,dx_1-C.
\end{split}
\end{equation}

Next, we turn to the third term of \eqref{C529}.
\begin{equation}\label{C535}
\begin{split}
&\int_0^\infty\frac{1}{x_1}\,\partial_1\big(\int_0^1 x_1\,\partial_1u_1\,dx_2\big)\,\xi\cdot\zeta_1\,dx_1\\
&=\int_0^\infty x_1^{-1}\big(\int_0^1\partial_1u_1\,dx_2\big)\,\xi\cdot\zeta_1\,dx_1-\int_0^\infty\big(\int_0^1\partial_1u_1\,dx_2\big)\,\xi\cdot\zeta_1'\,dx_1\\
&\quad+\int_0^\infty\big(\int_0^1\partial_1u_1\,dx_2\big)\cdot\big(\int_0^1(P-P_\infty)\,dx_2\big)\cdot x_1^{-1/2}\zeta_1^2\,dx_1.
\end{split}
\end{equation}
In view of \eqref{C303} and \eqref{C520}, we argue that
\begin{equation}\label{C536}
\begin{split}
&\int_0^\infty x_1^{-1}\big(\int_0^1\partial_1u_1\,dx_2\big)\,\xi\cdot\zeta_1\,dx_1\\
&\leq C\big(\int_\Omega|\partial_1u_1|\cdot x_1^{-1/4}\zeta_1\,dx_1
dx_2\big)\,\big(\int_0^\infty\big(\int_0^1(P-P_\infty)\,dx_2\big)^2x_1^{-1/2}\,\zeta_1^2\,dx_1+1\big)^{\frac{1}{2}}\\
&\leq \frac{1}{8}\int_0^\infty\big(\int_0^1(P-P_\infty)\,dx_2\big)^2x_1^{-1/2}\,\zeta_1^2\,dx_1+C.
\end{split}
\end{equation}
While the fact $\mathrm{supp}\,\zeta_1'\subset[1,2]\times\mathbb{T}$ together with \eqref{C303} and \eqref{C520} also gives
\begin{equation}\label{C537}
-\int_0^\infty\big(\int_0^1\partial_1u_1\,dx_2\big)\,\xi\cdot\zeta_1'\,dx_1\leq C.\\
\end{equation}
Note that \eqref{C303} leads to
\begin{equation}\label{C538}
\begin{split}
&\int_0^\infty\big(\int_0^1\partial_1u_1\,dx_2\big)\cdot\big(\int_0^1(P-P_\infty)\,dx_2\big)\cdot x_1^{-1/2}\zeta_1^2\,dx_1\\
&\quad\leq\frac{1}{8}\int_0^\infty\big(\int_0^1(P-P_\infty)\,dx_2\big)^2x_1^{-1/2}\,\zeta_1^2\,dx_1+C.\\
\end{split}
\end{equation}
Substituting \eqref{C536}, \eqref{C537}, and \eqref{C538} into \eqref{C535} leads to 
\begin{equation}\label{C539}
\begin{split}
&\int_0^\infty\frac{1}{x_1}\,\partial_1\big(\int_0^1 x_1\,\partial_1u_1\,dx_2\big)\,\xi\cdot\zeta_1\,dx_1\\
&\leq\frac{1}{4}\int_0^\infty\big(\int_0^1(P-P_\infty)\,dx_2\big)^2x_1^{-1/2}\,\zeta_1^2\,dx_1+C.
\end{split}
\end{equation}

Once more, we utilize \eqref{C303} and \eqref{C520} to handle the right hand side of \eqref{C529} by
\begin{equation}\label{C540}
\begin{split}
&-\int_0^\infty\frac{1}{x_1^2}\big(\int_0^1 u_1\,dx_2\big)\,\xi\cdot\zeta_1\,dx_1\\
&\leq C\big(\int_\Omega|u_1|\cdot x_1^{-5/4}\zeta_1\,dx_1
dx_2\big)\,\big(\int_0^\infty\big(\int_0^1(P-P_\infty)\,dx_2\big)^2x_1^{-1/2}\,\zeta_1^2\,dx_1+1\big)^{\frac{1}{2}}\\
&\leq \frac{1}{8}\int_0^\infty\big(\int_0^1(P-P_\infty)\,dx_2\big)^2x_1^{-1/2}\,\zeta_1^2\,dx_1+C.
\end{split}
\end{equation}

Combining \eqref{C530}, \eqref{C534}, \eqref{C539}, and \eqref{C540}, we arrive at
\begin{equation}\label{C541}
\begin{split}
\int_0^\infty\big(\int_0^1\rho u_1^2\,dx_2\big)\cdot\big(\int_0^1(P-P_\infty)\,dx_2\big)\cdot x_1^{-1/2}\zeta_1^2\,dx_1\leq C.
\end{split}
\end{equation}
We mention that \eqref{C541} merely provide the upper bound of the left hand side term.

Now, substituting \eqref{C541} into \eqref{CC308}, we apply \eqref{C507} and Corollary \ref{C51} to deduce that for any $x_1\geq 2$,
\begin{equation*} 
\bigg|\int_{x_1}^\infty\int_0^1\rho u_1u_2 \,dsdx_2\bigg|\leq C,
\end{equation*}
which along with \eqref{C319} implies that for any $x_1\geq 2$ 
\begin{equation*}
\bigg|\int_0^1u_2(x_1,x_2)\,dx_2\bigg|\leq C,
\end{equation*}
and provides \eqref{C316}. Finally, we take advantage of \eqref{C303} and \eqref{C316} to argue that
\begin{equation*}
\begin{split}
&\int_{2}^\infty\int_0^1u_2^2\cdot x_1^{-2} \,dx_1dx_2\\
&\leq C\int_{2}^\infty\int_0^1\big(u_2-\int_0^1 u_2\,dx_2\big)^2\cdot x_1^{-2} \,dx_1dx_2+C\int_{2}^\infty\left(\int_0^1u_2\,dx_2\right)^2\, x_1^{-2} \,dx_1\\
&\leq C\int_\Omega|\partial_2 u_2|^2x_1dx_1dx_2+C\int_{2}^\infty\left(\int_0^1 u_2\,dx_2\right)^2\, x_1^{-2} \,dx_1\leq C,
\end{split}
\end{equation*}
which combined with \eqref{C303} yields \eqref{C334}. The proof is therefore completed.
\end{proof}

\subsection{Estimates away from the symmetric axis}\label{SS31}
\quad With Propositions \ref{P31}--\ref{P35} in hands, we can close the estimates on the domain away from the axis.  
We first utilize Proposition \ref{P32} to establish the weighted potential estimates of $\rho$ and $\rho|u|^2$.  
Let us introduce a truncation $\psi(x)\in C^\infty(\mathbb{R}^2)$ near $0$,
\begin{equation*}
\begin{cases}
\psi=1~~\mbox{if}\  |x|\leq 1,\\
\psi=0~~\mbox{if}\  |x|\geq 2,\\
0\leq\psi\leq 1~~\forall x\in\mathbb{R}^2.
\end{cases}
\end{equation*}
\begin{proposition}\label{P33}
Under the conditions of Lemma \ref{T1}, for any  $m\geq 6$, $0<\beta<1$, and $\tilde{x}\in\Omega$, there is a constant $C$ depending only on $m$ and $\beta$ such that
\begin{equation}\label{C307}
\begin{split}
&\int_\Omega\frac{\rho|u|^2+P}{|x-\tilde{x}|^\beta}\,\psi(x-\tilde{x})\,x_1^{-m}\cdot\zeta_1^2 \,dx_1dx_2\leq C\left(\int_\Omega\rho u_2^2\,x_1^{-m}\cdot\zeta_1^2\,dx_1dx_2+1\right).\\
\end{split}
\end{equation}
\end{proposition}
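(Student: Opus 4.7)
The plan is to test the weak momentum equation \eqref{C109} with a radial multiplier whose divergence reproduces the weight $|x-\tilde x|^{-\beta}$. Concretely, I take
\[
\varphi(x):=\Psi(x)\,\frac{x-\tilde x}{|x-\tilde x|^{\beta}},\qquad \Psi(x):=\psi(x-\tilde x)\,x_1^{-m-1}\,\zeta_1^{2}(x_1),
\]
as an admissible test field (after a routine smoothing of the singularity at $\tilde x$). The boundary conditions $\varphi_1|_{x_1=0}=\partial_1\varphi_2|_{x_1=0}=0$ follow from the flatness of $\zeta_1^{2}$ at the origin, and the power $x_1^{-m-1}$ is chosen so that the factor $x_1$ in \eqref{C109} is absorbed, $\Psi\,x_1=\psi\,x_1^{-m}\zeta_1^{2}$, matching the weight on the left-hand side of \eqref{C307}.

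The central computation uses the two-dimensional identities
\[
\mathrm{div}\Big(\tfrac{x-\tilde x}{|x-\tilde x|^{\beta}}\Big)=\tfrac{2-\beta}{|x-\tilde x|^{\beta}},\qquad \partial_i\Big(\tfrac{x_j-\tilde x_j}{|x-\tilde x|^{\beta}}\Big)=\tfrac{\delta_{ij}}{|x-\tilde x|^{\beta}}-\beta\,\tfrac{(x_i-\tilde x_i)(x_j-\tilde x_j)}{|x-\tilde x|^{\beta+2}}.
\]
The pressure term in \eqref{C109} then contributes $(2-\beta)\int_\Omega P\,|x-\tilde x|^{-\beta}\psi\,x_1^{-m}\zeta_1^{2}\,dx_1dx_2$ up to lower order, while the convective term contributes $\int_\Omega\rho\bigl(|u|^2-\beta(u\cdot\hat r)^2\bigr)|x-\tilde x|^{-\beta}\psi\,x_1^{-m}\zeta_1^{2}\,dx_1dx_2$ with $\hat r=(x-\tilde x)/|x-\tilde x|$. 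The key observation is that $\beta<1$ forces $|u|^{2}-\beta(u\cdot\hat r)^{2}\geq(1-\beta)|u|^{2}\geq 0$, so \emph{both} principal contributions are nonnegative and together dominate the quantity on the left-hand side of \eqref{C307}.

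What remains is to estimate the right-hand side of the tested identity. The Laplacian contribution $\int\nabla u\!:\!\nabla\varphi\,x_1\,dx_1dx_2$ is controlled by Cauchy-Schwarz together with \eqref{C303}, using that $|\nabla\varphi|\lesssim|x-\tilde x|^{-\beta}$ lies in $L^{2}_{loc}$ for $\beta<1$; the $u_1\varphi_1/x_1^{2}$ term is absorbed via the Hardy-type bound on $u_1/x_1$ from \eqref{C303}; and $\int g\cdot\varphi\,x_1\,dx$ is finite by \eqref{C115} since $|\varphi|\lesssim|x-\tilde x|^{1-\beta}$ is bounded with compact support. The lower-order pieces produced by $\nabla\Psi$ and by the cylindrical correction $\varphi_1/x_1$ are either supported on $\{|x-\tilde x|\in[1,2]\}\cup\{x_1\in[1,2]\}$, where $|x-\tilde x|^{-\beta}$ is bounded, or carry the non-singular factor $x_1^{-m-2}$ which only improves the decay at infinity; they therefore reduce to weighted $L^{1}$-integrals of $P$ and $\rho|u|^{2}$ against $x_1^{-m}\zeta_1$. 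The $P$ and $\rho u_1^{2}$ contributions are absorbed using the prior bounds from Propositions \ref{P32} and \ref{P52}, whereas the $\rho u_2^{2}$ contribution has no analogous estimate and must appear on the right of \eqref{C307}. The main obstacle is exactly this asymmetry: the radial component $u_1$ enjoys Hardy control together with the cancellation $\int_0^1\rho u_1\,dx_2=0$ derived in Proposition \ref{P35}, whereas the axial component $u_2$ is only controlled pointwise in $x_1$ (cf.\ Proposition \ref{P35}) and its weighted $L^{2}$-norm must remain on the right-hand side.
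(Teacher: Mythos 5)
Your proposal follows essentially the same route as the paper's own proof: multiply the momentum equations by the radial multiplier $(x-\tilde x)/|x-\tilde x|^\beta$ weighted by $\psi(x-\tilde x)\,x_1^{-m}\zeta_1^2$, invoke the pointwise identity $\partial_i\bigl(\tfrac{x_j}{|x|^\beta}\bigr)=\tfrac{\delta_{ij}}{|x|^\beta}-\beta\tfrac{x_ix_j}{|x|^{\beta+2}}$ so that, for $\beta<1$, both the convective and pressure terms give nonnegative contributions dominating the left side of \eqref{C307}, and then absorb the lower-order pieces (supported where $|x-\tilde x|$ or $x_1$ is bounded away from the singularity) via \eqref{C303} and \eqref{C305}, leaving only the $\rho u_2^2$ contribution on the right. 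Your reformulation through the weak identity \eqref{C109} with weight $x_1^{-m-1}\zeta_1^2$ (so $\Psi x_1=\psi x_1^{-m}\zeta_1^2$) is cosmetically different from the paper's direct manipulation of \eqref{C301} but mathematically identical; the small citation slips (the cancellation $\int_0^1\rho u_1\,dx_2=0$ is \eqref{C317} in the proof of Proposition \ref{P52}, and is not actually needed in the proof of Proposition \ref{P33}) do not affect the argument.
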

\begin{remark}
Note carefully that the truncation in the first term on the right hand side of \eqref{C307} only involves $\zeta_1$ instead of $\zeta_1'$. This little trick helps us close the estimates away from the axis directly.
\end{remark}
\begin{proof}
Adding \eqref{C301}$_{(i+1)}$ multiplied by $(x_i-\tilde{x}_i)/|x-\tilde{x}|^\beta\cdot\psi(x-\tilde{x})\cdot x_1^{-m}\,\zeta_1^2$ together, and integrating the result over $\Omega$ yields
\begin{equation}\label{C308}
\begin{split}
&\int_\Omega(\rho u_i u_j+P\,\delta_{ij})\cdot\partial_i\big(\frac{x_j-\tilde{x}_j}{|x-\tilde{x}|^\beta}\big)\cdot\psi(x-\tilde{x})\,x_1^{-m}\,\zeta_1^2\, dx_1dx_2\\
&\leq -\int_\Omega\big(\rho u_1u_i\,x_1\big)\cdot\frac{x_i-\tilde{x}_i}{|x-\tilde{x}|^\beta}\cdot\partial_1(\psi\, x_1^{-m-1}\,\zeta_1^2)\,dx_1dx_2\\
&\quad-\int_\Omega\big(\rho u_2u_i\big)\cdot\frac{x_i-\tilde{x}_i}{|x-\tilde{x}|^\beta}\cdot\partial_2\psi\cdot x_1^{-m}\,\zeta_1^2\,dx_1dx_2\\
&\quad+C\int_\Omega\big(|g|+P+|u_1|+\frac{|\nabla u|}{|x-\tilde{x}|^\beta}\big)\cdot x_1^{-m} \cdot(|\psi\cdot\zeta_1|+|\nabla(\psi\cdot\zeta_1)|)\,dx_1dx_2.
\end{split}
\end{equation}
By means of \eqref{C303} and \eqref{C305}, we argue that
\begin{equation*}
\begin{split}
&\int_\Omega\big(|g|+P+|u_1|+\frac{|\nabla u|}{|x-\tilde{x}|^\beta}\big)\cdot x_1^{-m} \cdot(|\psi\cdot\zeta_1|+|\nabla(\psi\cdot\zeta_1)|)dx_1dx_2\\
&\leq C\,\left(\int_\Omega\big(\frac{|u_1|^2}{x_1}+|\nabla u_1|^2x_1\big)dx_1dx_2\right)^{\frac{1}{2}}+C\int_A(\rho|u_1|^2+P)dx_1dx_2+C\\
&\leq C.
\end{split}
\end{equation*}
Moreover, the first two terms on the right hand side of \eqref{C308} are handled in the same manner. For example, with the help of \eqref{C305}, we have
\begin{equation*}
\begin{split}
&-\int_\Omega(\rho u_1u_i\,x_1)\cdot\frac{x_i-\tilde{x}_i}{|x-\tilde{x}|^\beta}\cdot\partial_1(\psi\, x_1^{-m-1}\,\zeta_1^2)\,dx_1dx_2\\
&\leq C\int_\Omega\rho|u_2|^2x_1^{-m}\cdot\zeta_1^2\,dx_1dx_2+C\int_\Omega\rho|u_1|^2x_1^{-m}\cdot|\zeta_1'|^2\,dx_1dx_2\\
&\leq C\int_\Omega\rho|u_2|^2\,x_1^{-m}\cdot\zeta_1^2\,dx_1dx_2+C.
\end{split}
\end{equation*}
Then, we turn to the left hand side of \eqref{C308}. Observe that
\begin{equation}\label{C348}
\begin{split}
\rho u_iu_j\cdot\partial_i\left(\frac{x_j}{|x|^\beta}\right)
=&\rho u_iu_j\cdot\left(\frac{\delta_{ij}}{|x|^\beta}-\beta\cdot\frac{x_ix_j}{|x|^{\beta+2}}\right)\geq(1-\beta)\cdot\frac{\rho|u|^2}{|x|^\beta},\\
\partial_i\left(\frac{x_i}{|x|^\beta}\right)
&=\frac{2}{|x|^\beta}-\beta\cdot\sum_i\frac{x_i^2}{|x|^{\beta+2}}=\frac{2-\beta}{|x|^{\beta}}.
\end{split}
\end{equation}

Substituting all these results into \eqref{C308}, we arrive at
\begin{equation*}
\begin{split}
&\int_\Omega\frac{\rho|u|^2+P}{|x-\tilde{x}|^\beta}\,\psi(x-\tilde{x})\,x_1^{-m}\cdot\zeta_1^2 \,dx_1dx_2 \leq C\left(\int_\Omega\rho u_2^2\cdot x_1^{-m}\cdot\zeta_1^2\,dx_1dx_2+1\right),\\
\end{split}
\end{equation*}
which is \eqref{C307} and finishes the proof.
\end{proof}

Now, let us combine Propositions \ref{P31}--\ref{P33} and claim that the density $\rho$ gains extra integrability when we consider the domain away from the axis. 
\begin{proposition}\label{P34}
Under the conditions of Lemma \ref{T1}, for $m\geq 6$ and $\theta=11/30$, there is a constant $C$ such that
\begin{equation}\label{C311}
\int_\Omega\big(\rho^{1+\theta}+\varepsilon\rho^{\alpha+\theta}\big)\,x_1^{-m-1} \zeta_1^2\,dx_1dx_2\leq C.
\end{equation}
\end{proposition}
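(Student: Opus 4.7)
The strategy is to follow the Plotnikov--Sokolowski / Plotnikov--Weigant framework for the 2D isothermal system~\cite{PS05,PW}, adapted to work on the unbounded tube $\Omega=(0,\infty)\times\mathbb{T}$ with the cut-off $\zeta_1$ and the decaying weight $x_1^{-m-1}$. Since $\zeta_1$ vanishes on $[0,1]$, the singular factors $1/x_1$ and $1/x_1^2$ in \eqref{C301} are harmless on its support, and the problem reduces to a 2D analysis on a strip with a weight that decays at infinity fast enough to overcome the lack of integrability of $\rho$ from \eqref{CC506}.

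Concretely, I would build a Bogovskii--type test function
$$\varphi = \nabla\Delta^{-1}\!\big[(\rho^\theta + \varepsilon\rho^{\alpha+\theta-1})\,x_1^{-m-1}\zeta_1^2 - c(x_1)\big],$$
where $c(x_1)$ is chosen to kill the $x_2$-mean of the bracket so that Lemma~\ref{L23} applies and yields the Calder\'on--Zygmund bound $\|\nabla\varphi\|_{L^p(\mathbb{R}\times\mathbb{T})}\le C\|(\rho^\theta+\varepsilon\rho^{\alpha+\theta-1})x_1^{-m-1}\zeta_1^2\|_{L^p}$. Testing the radial and axial momentum equations of \eqref{C301} against the corresponding components of $\varphi$ (weighted by the natural $x_1$ of the cylindrical integration), the pressure term produces precisely the left-hand side of \eqref{C311}. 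The viscous piece is handled by Calder\'on--Zygmund applied to $\nabla\varphi$ together with Proposition~\ref{P31}; the external force contribution is absorbed via \eqref{C115}; the lower-order term $x_1^{-2}u_1$ is controlled by Proposition~\ref{P31} since $x_1\ge 1$ on $\mathrm{supp}\,\zeta_1$; the $c(x_1)$-correction is absorbed using \eqref{CC506} and the decay in $x_1^{-m-1}$.

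The delicate term is the convection contribution $\int \rho u_iu_j\,\partial_i\varphi_j\,x_1\uud x_1\uud x_2$. Splitting into the $u_1$-factor and the $u_2$-factor, the $u_1$-part is treated using the weighted bound $\rho u_1^2\cdot x_1^{-1/16}\zeta_1\in\mathcal{L}^1$ from Proposition~\ref{P52}, while the $u_2$-part is handled by first subtracting the slice average, applying the modified Sobolev inequality \eqref{C213} to $u_2-\int_0^1 u_2\,dx_2$ (which has vanishing $x_2$-mean), and then using the slice estimate \eqref{C316} together with \eqref{C334} to control $\int u_2\,dx_2$ in the region $\{x_1\ge 1\}$. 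Applying H\"older's inequality with the Calder\'on--Zygmund bound on $\nabla\varphi$, the convection term is estimated by a product in which the $\rho$-factor is at most $\rho^{1+\theta}$ raised to a power strictly smaller than one, so it can be absorbed by Young's inequality into the left-hand side. The specific value $\theta=11/30$ arises by equating the H\"older exponents coming from the $L^p$-bound on $\rho u_1^2$, the weighted $L^q$-bound on $u_2$, and the Calder\'on--Zygmund bound on $\nabla\varphi$.

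The main obstacle is the convection term: in contrast to the case $\gamma>1$, the isothermal pressure $P=\rho$ forces all extra integrability to come from $\rho$ itself, so $\theta$ must be tuned so that $\rho\cdot(u\otimes u)\cdot\nabla\Delta^{-1}\rho^\theta$ is strictly below $\rho^{1+\theta}$ in scaling after the available estimates on $u$ are inserted. This requires combining the cancellation-based control of $u_2$ from Proposition~\ref{P35} with the sub-critical weighted estimate of Proposition~\ref{P52}, both of which are themselves non-trivial consequences of the axisymmetric cancellation structure of \eqref{C301}.
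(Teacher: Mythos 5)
Your test function is in the same spirit as the paper's, although the paper keeps the weight $x_1^{-m}\zeta_1^2$ outside the operator and builds the divergence--preimage of $\widetilde{\rho^\theta}=\rho^\theta x_1^{-1}\eta$ directly, with an explicit $\int_{x_1}^\infty$--correction that makes $\int_0^1 f_i\,dx_2=0$ and $\mathrm{div}f=\widetilde{\rho^\theta}$ exact; this circumvents the well-definedness issues that your $c(x_1)$-correction would raise when the bracket has nonzero mean over $\mathbb{R}\times\mathbb{T}$. That is cosmetic. The genuine gap is in the convection term.

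Your plan for $\int\rho u_iu_j\,\partial_i\varphi_j$ --- H\"older against the Calder\'on--Zygmund bound on $\nabla\varphi$, plus Proposition~\ref{P52} for the $u_1$-piece and the slice estimates for the $u_2$-piece, then Young to absorb --- does not close on scaling grounds. From \eqref{C313}--\eqref{C314} the best available is $\|\nabla f\|_{L^q}\le C$ for $q\le 5/2$, and even pretending the conclusion $\rho\in L^{1+\theta}$ is available via a bootstrap, the exponents
\[
\frac{1}{1+\theta}+\frac{1}{p}+\frac{2}{5}=1
\]
force $1/p<0$ when $\theta=11/30$: there is no Lebesgue space left for the $u$-factors. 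More basically, $\rho u_2^2$ only lies in a weighted $L^1$, and $\nabla\varphi$ is not in $L^\infty$, so no direct duality pairing is possible. The slice estimates of Proposition~\ref{P35} and the weighted bound from Proposition~\ref{P52} supply control on $u$, not the extra integrability on $\rho$ that the convection term needs.

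What the paper actually does --- and what is missing from your proposal --- is a two-layer structure. First, the momentum flux is decomposed via the Cauchy--Riemann system $\nabla F+\nabla^\perp\pi=\rho u\,x_1^{-m+1}\zeta_1^2$; after substituting $\nabla\varphi_j=\nabla\partial_j\Delta^{-1}(\widetilde{\rho^\theta})$ and integrating by parts, the convection term becomes $-\int F\,\widetilde u\cdot\nabla\widetilde{\rho^\theta}-\int F\,\nabla\partial_j\Delta^{-1}(\widetilde{\rho^\theta})\cdot\nabla\widetilde u_j$ plus the analogous $\pi$-terms, and the renormalized continuity equation \eqref{C111} is then used to replace $\widetilde u\cdot\nabla\widetilde{\rho^\theta}$ by terms without a derivative on $\rho^\theta$. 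Second, and essentially, the integrability of the potentials $F$ and $\pi$ is upgraded through the weighted Riesz--potential estimate of Proposition~\ref{P33} (inequality \eqref{C307}), which is the Frehse--Goj--Steinhauer/Plotnikov--Sokolowski ingredient: it yields an $L^\infty$ bound on $\int|\rho u|\,|x-\tilde x|^{-\beta}x_1^{-m+1}\zeta_1^2$, and interpolating this with the Hardy--Littlewood--Sobolev bound gives $\mathbf{E}_2(F),\mathbf{E}_2(\omega)\in L^s$ for $s$ large. Only with these $L^s$ bounds --- and the subsequent small-power interpolation \eqref{C336}, where the exponent $\kappa<1/(100m)$ is what permits absorption by Young --- does the convection term close. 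Your proposal omits Proposition~\ref{P33} entirely and replaces the div--curl structure with a H\"older pairing that cannot succeed; this is the step that would fail.
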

\begin{proof}
The complete proof is rather long, so we divide it into several steps.

\textit{Step 1. Substitution of the test functions.}

To make notations clear, we first set the truncation $\eta(x_1)\triangleq 1-\zeta_0(2x_1)$ and introduce the notation
$$\widetilde{h}\triangleq h\cdot x_1^{-1}\eta.$$
We define the test functions
\begin{equation*}
\begin{split}
f_1&\triangleq\partial_1\Delta^{-1}\big(\widetilde{\rho^\theta}-\int_0^1\widetilde{\rho^\theta}dx_2\big)-\int_{x_1}^\infty(\int_0^1\widetilde{\rho^\theta}dx_2)\,ds,\\
f_2&\triangleq\partial_2\Delta^{-1}\big(\widetilde{\rho^\theta}\big),
\end{split}
\end{equation*}
where $\Delta^{-1}$ is the inversion of $\Delta$ on  $\mathbb{R}\times\mathbb{T}$ introduced by Lemma \ref{L23}.
  
Let us make a brief discussion on them. Note that in view of \eqref{C205} and \eqref{C206}, $f$ satisfies the cancellation condition and provides an ``inversion" of $\mathrm{div}$, 
\begin{equation}\label{C312}
\begin{split}
\int_0^1\partial_1\Delta^{-1}\left(\widetilde{\rho^\theta}-\int_0^1\widetilde{\rho^\theta}dx_2\right)dx_2=\int_0^1 f_2\,dx_2=0,~~\mathrm{div}f=\widetilde{\rho^\theta}.
\end{split}
\end{equation}
In addition, for any $p\in[1,5/2]$, we have $\theta\cdot p=11/12<1$, thus the interpolation argument ensures that,
\begin{equation}\label{C313}
\|\widetilde{\rho^\theta}\|_{L^p(\Omega)}\leq C\|(\widetilde{\rho})^\theta\cdot \eta\,x_1^{-1+\theta}\|_{L^p(\Omega)}\leq C\|\rho\|_{\mathcal{L}^1(\Omega)}\leq C.
\end{equation}
Consequently, according to \eqref{C312}, \eqref{C313}, the elliptic estimates \eqref{C205}, and Sobolev embedding theorem, we argue that $\forall q\in(1,5/2]$,
\begin{equation}\label{C314}
\begin{split}
\|\nabla f\|_{L^q(\Omega)}\leq C\,\|\widetilde{\rho^\theta}\|_{L^q(\Omega)}\leq C,~~\|f\|_{L^\infty(\Omega)}\leq \|\nabla f\|_{L^{5/2}(\Omega)}\leq C.
\end{split}
\end{equation}
 
Now, we substitute the test functions into \eqref{C301}. Adding \eqref{C301}$_2$ multiplied by $f_1\cdot x_1^{-m}\,\zeta_1^2$ and \eqref{C302}$_3$ multiplied by $f_2\cdot x_1^{-m}\,\zeta_1^2$ together, we integrate the result over $\Omega$ and apply \eqref{C312} together with the fact $\eta\,\cdot\,\zeta_1=\zeta_1$ to deduce that
\begin{equation*}
\begin{split}
&\int_\Omega (\rho^{1+\theta}+\varepsilon\rho^{\alpha+\theta})\cdot x_1^{-m-1} \zeta_1^2\,dx_1dx_2=J_0+J_1+J_2+J_3,\\
\end{split}
\end{equation*}
where the remaining terms $J_i$ are given by
\begin{equation*}
\begin{split}
J_0&=-\int_\Omega g\cdot f\cdot x_1^{-m}\zeta_1^2\,dx_1dx_2,\\
J_1&=-\int_\Omega\partial_1(x_1\,\partial_1u_i)\,f_i\cdot x_1^{-m-1}\zeta_1^2\,dx_1dx_2-\int_\Omega\partial_{22}u_i\cdot f_i\cdot x_1^{-m}\zeta_1^2\,dx_1dx_2,\\
J_2&=-\int_\Omega P\, f_i\cdot\partial_i(x_1^{-m}\zeta_1^2)\,dx_1dx_2+\int_\Omega u_1\cdot f_1\cdot x_1^{-m-2}\zeta_1^2\,dx_1dx_2,\\
J_3&=\int_\Omega\partial_1(x_1\rho u_1u_i)\,f_i\cdot x_1^{-m-1}\zeta_1^2\,dx_1dx_2+\int_\Omega\partial_2(\rho u_2u_i)\,f_i\cdot x_1^{-m}\zeta_1^2\,dx_1dx_2,\\
\end{split}
\end{equation*}

With the help of \eqref{C305} and \eqref{C314}, we calculate that $J_1$ is controlled by
\begin{equation}\label{C355}
\begin{split}
J_1&=\int_\Omega x_1\,\partial_1u_i\cdot\partial_1\big(f_i\cdot x_1^{-m-1}\zeta_1^2\big)\,dx_1dx_2+\int_\Omega\partial_2u_i\cdot\partial_2\big(f_i\cdot x_1^{-m}\zeta_1^2\big)\,dx_1dx_2\\
&\leq C \left(\|\nabla f\|_{L^2(\Omega)}+\|f\|_{L^\infty(\Omega)}\right)\,\left(\int_\Omega|\nabla u|^2x_1dx_1dx_2\right)^{\frac{1}{2}}\leq C,\\
\end{split}
\end{equation}
while $J_0$ and $J_2$ are controlled by
\begin{equation}\label{C356}
\begin{split}
J_0&\leq\|f\|_{L^\infty(\Omega)}\|g\|_{\mathcal{L}^2(\Omega)}\leq C,\\
J_2&\leq C\|f\|_{L^\infty(\Omega)}\int_\Omega P\,x_1^{-m}\zeta_1\,dx_1dx_2
+C\|f\|_{L^\infty(\Omega)}\left(\int_\Omega u_1^2\cdot x_1^{-1}\,dx_1dx_2\right)^{\frac{1}{2}}\\
&\leq C.\\
\end{split}
\end{equation}

Then, let us focus on the principal term $J_3$.  
\begin{equation*}
\begin{split}
J_3=-\int_\Omega\rho u_iu_j\cdot\partial_if_j\cdot x_1^{-m}\zeta_1^2\,dx_1dx_2-\int_\Omega x_1\rho u_1u_i\cdot f_1\,\partial_1(x_1^{-m-1}\zeta_1^2)\,dx_1dx_2.
\end{split}
\end{equation*}
By means of \eqref{C305} and \eqref{C314}, the second part is directly bounded by
\begin{equation}\label{C340}
\begin{split}
&\int_\Omega x_1\rho u_1u_i\cdot f_1\,\partial_1(x_1^{-m-1}\zeta_1^2)\,dx_1dx_2\\
&\leq C\|f\|_{L^\infty(\Omega)}\left(\int_\Omega\rho u_1^2\cdot x_1^{-m}(\zeta_1^2+|\zeta'_1|^2)\,dx_1dx_2+\int_\Omega\rho u_2^2\cdot x_1^{-m}\zeta_1^2\,dx_1dx_2\right)\\
&\leq C\left(\int_\Omega\rho u_2^2\cdot x_1^{-m}\cdot\zeta_1^2\,dx_1dx_2+1\right).
\end{split}
\end{equation}
Next, we rearrange the first part as
\begin{equation}\label{C315}
\begin{split}
&\int_\Omega\rho u_iu_j\cdot\partial_if_j\cdot x_1^{-m}\zeta_1^2\,dx_1dx_2\\
&=\int_\Omega(\rho u\cdot x_1^{-m+1}\zeta_1^2)\cdot\nabla\partial_j\Delta^{-1}( \rho^\theta\eta\,x_1^{-1})\cdot(u_j\eta\,x_1^{-1})\,dx_1dx_2\\
&=\int_\Omega(\rho u\cdot x_1^{-m+1}\zeta_1^2)\cdot\nabla\partial_j\Delta^{-1}(\widetilde{\rho^\theta})\cdot\widetilde{u}_j\,dx_1dx_2.
\end{split}
\end{equation}
The key issue is to properly control \eqref{C315}.

\textit{Step 2. Application of the potential estimates \eqref{C307}.}

We establish some auxiliary assertions. Let us consider the Cauchy-Riemann equation over $ \mathbb{R}\times\mathbb{T}$,
\begin{equation*}
\nabla F+\nabla^\bot\omega=\rho u\cdot x_1^{-m+1}\zeta_1^2-\int_0^1\rho u\cdot x_1^{-m+1}\zeta_1^2\,dx_2.
\end{equation*}
It holds that $\int F\,dx_2=\int\omega\,dx_2=0$ and
\begin{equation}\label{C331}
\begin{split}
&\nabla F+\nabla^\bot\big(\omega-\int_{x_1}^\infty\int_0^1\rho u_2\cdot s^{-m+1}\zeta_1^2 \,dsdx_2\big)=\rho u\cdot x_1^{-m+1}\zeta_1^2.
\end{split}
\end{equation}
due to \eqref{C317}, and we denote $\pi$ as the term in the large bracket of the above equation. Let us first apply the elliptic estimates \eqref{C205} to derive that, for any $\delta>0$,
\begin{equation}\label{C324}
\|\nabla F\|_{L^{1+\delta}(\mathbb{R}\times\mathbb{T})}+\|\nabla\omega\|_{L^{1+\delta}(\mathbb{R}\times\mathbb{T})}\leq C(\delta)\,\|\rho u\,x_1^{-m+1}\zeta_1^2\|_{L^{1+\delta}(\Omega)}.
\end{equation}
Since $F$ and $\omega$ satisfy the cancellation condition, Sobolev embedding theorem guarantees that, for $1/q=1/(1+\delta)-1/2=(1-\delta)/(2+2\delta)$, we also have
\begin{equation}\label{C325}
\|F\|_{L^{q}(\mathbb{R}\times\mathbb{T})}+\|\omega\|_{L^{q}(\mathbb{R}\times\mathbb{T})} \leq C \,\|\rho u\,x_1^{-m+1}\zeta_1^2\|_{L^{1+\delta}(\Omega)}.
\end{equation}

Next, let $\mathbf{E}_1(\cdot)$ be the periodic extension of $\mathbb{R}\times\mathbb{T}$ to $\mathbb{R}\times[-1,2]$.  
Thus, by virtue of \eqref{C315}, we obtain the Cauchy-Riemann equations on $\mathbb{R}\times[-1,2]$,
\begin{equation*}
\nabla\mathbf{E}_1(F)+\nabla^\bot\mathbf{E}_1\big(\omega-\int_{x_1}^\infty\int_0^1\rho u_2\cdot s^{-m+1}\zeta_1^2\,dsdx_2\big)=\mathbf{E}_1\big(\rho u\cdot x_1^{-m+1}\zeta_1^2\big),
\end{equation*}
meanwhile \eqref{C324} together with \eqref{C325} also implies that
\begin{equation}\label{C326}
\begin{split}
\|\mathbf{E}_1(F)\|_{L^q(\mathbb{R}\times[-1,2])}+\|\mathbf{E}_1(\omega)\|_{L^q(\mathbb{R}\times[-1,2])}\leq C\,\|\rho u\,x_1^{-m+1}\zeta_1^2\|_{L^{1+\delta}(\Omega)}.\\
\end{split}
\end{equation}

Moreover, we consider the Cauchy-Riemann equations on $\mathbb{R}^2$,
\begin{equation*}
\nabla\mathbf{E}_2(F)+\nabla^\bot\mathbf{E}_2(\omega)=\mathbf{E}_1\big(\rho u\cdot x_1^{-m+1}\zeta_1^2\big),
\end{equation*}
where the right hand side is 0 out of $\mathbb{R}\times[-1,2]$. The standard theory of the singular integral \cite[Chapter VI]{SW} ensures that for any $\tilde{x}\in\mathbb{R}^2$,
\begin{equation}\label{C330}
|\mathbf{E}_2(F)|(\tilde{x})+|\mathbf{E}_2(\omega)|(\tilde{x}) \leq C\int_{\mathbb{R}^2}\frac{\mathbf{E}_1\big(\rho u\cdot x_1^{-m+1}\zeta_1^2\big)}{|x-\tilde{x}|}dx.
\end{equation}
In addition, the Hardy-Littlewood-Sobolev inequality \eqref{C204} implies that
\begin{equation}\label{C327}
\|\mathbf{E}_2(F)\|_{L^q(\mathbb{R}^2)}+\|\mathbf{E}_2(\omega)\|_{L^q(\mathbb{R}^2)}\leq C\,\|\rho u\,x_1^{-m+1}\zeta_1^2\|_{L^{1+\delta}(\Omega)}.
\end{equation}

Then, we apply \eqref{C307} to improve the integrability of $\mathbf{E}_2(F)$ and $\mathbf{E}_2(\omega)$. Observe that, for $\beta\tilde{\theta}+(2-\epsilon)(1-\tilde{\theta})=1$, 
\begin{equation*}
\begin{split}
&\int_{\mathbb{R}^2}\frac{\mathbf{E}_1\big(\rho u\cdot x_1^{-m+1}\zeta_1^2\big)}{|x-\tilde{x}|}dx
\\
&\leq\left(\int_{\mathbb{R}^2}\frac{\mathbf{E}_1\big(\rho u\cdot x_1^{-m+1}\zeta_1^2\big)}{|x-\tilde{x}|^\beta}dx\right)^{\tilde{\theta}}\cdot\left(\int_{\mathbb{R}^2}\frac{\mathbf{E}_1\big(\rho u\cdot x_1^{-m+1}\zeta_1^2\big)}{|x-\tilde{x}|^{2-\epsilon}}dx\right)^{1-\tilde{\theta}}.
\end{split}
\end{equation*}
Note that by virtue of \eqref{C307}, we infer that
\begin{equation*}
\begin{split}
&\left\|\int_{\mathbb{R}^2}\frac{\mathbf{E}_1\big(\rho u\cdot x_1^{-m+1}\zeta_1^2\big)}{|x-\tilde{x}|^\beta}dx\right\|_{L^\infty(\mathbb{R}^2)}\\
&\leq C\,\sup_{\tilde{x}}\int_\Omega\frac{\rho|u|^2+P}{|x-\tilde{x}|^\beta}\,\psi(x-\tilde{x})\cdot
x_1^{-m+1}\zeta_1^2\,dx_1dx_2+\|\rho u\cdot x_1^{-m+1}\zeta_1^2\|_{L^1(\Omega)}\\
&\leq C\int_\Omega\rho u_2^2\,x_1^{-m+1}\cdot\zeta_1^2\,dx_1dx_2+C\int_A(\rho u_1^2+\varepsilon\rho^{\alpha})dx_1dx_2+C.
\end{split}
\end{equation*}
Hardy-Littlewood-Sobolev inequality \eqref{C204} also implies that, for $1/\tilde{q}=1/(1+\delta)-\epsilon/2$,
\begin{equation*}
\begin{split}
&\left\|\int_{\mathbb{R}^2}\frac{\mathbf{E}_1\big(\rho u\cdot x_1^{-m+1}\zeta_1^2\big)}{|x-\tilde{x}|^{2-\epsilon}}dx\right\|_{L^{\tilde{q}}(\mathbb{R}^2)}\leq C\,\|\rho u\,x_1^{-m+1}\zeta_1^2\|_{L^{1+\delta}(\Omega)}.
\end{split}
\end{equation*}
According to interpolation arguments and \eqref{C330}, we deduce that for $s=\tilde{q}/(1-\tilde{\theta})$,
\begin{equation}\label{C329}
\begin{split}
&\|\mathbf{E}_2(F)\|_{L^{s}(\mathbb{R}^2)}+\|\mathbf{E}_2(\omega)\|_{L^{s}(\mathbb{R}^2)}\\
&\leq C\big(\|\rho u_2^2\,x_1^{-m+1}\zeta_1^2\|_{L^{1}(\Omega)}+\,\|\rho u\,x_1^{-m+1}\zeta_1^2\|_{L^{1+\delta}(\Omega)}+ 1\big).\\
\end{split}
\end{equation}

At last, we need to investigate the error terms 
\begin{equation}\label{C332}
\begin{split}
&\mathbf{R}(F)\triangleq\mathbf{E}_2(F)-\mathbf{E}_1(F),\\
&\mathbf{R}(\omega)\triangleq\mathbf{E}_2(\omega)-\mathbf{E}_1\big(\omega-\int_{x_1}^\infty\int_0^1\rho u_2\cdot s^{-m+1}\zeta_1^2\,dsdx_2\big).
\end{split}
\end{equation}
Direct computation implies $\Delta \mathbf{R}(F)=\Delta \mathbf{R}(\omega)=0$ in $\mathbb{R}\times[-1,2]$. Thus, the interior elliptic estimates \eqref{C208} together with \eqref{C326} and \eqref{C327} guarantee that
\begin{equation}\label{C328}
\begin{split}
&\|\mathbf{R}(F)\|_{L^\infty(\Omega)}+\|\mathbf{R}(\omega)\|_{L^\infty(\Omega)}\\
&\leq C\sum_{i=1,2}\left(\|\mathbf{E}_i(F)\|_{L^q(\mathbb{R}^2)}+\|\mathbf{E}_i(\omega)\|_{L^q(\mathbb{R}^2)}\right)+C\,\|\rho u\,x_1^{-m+1}\zeta_1^2\|_{L^{1}(\Omega)}\\
&\leq C\,\left(\|\rho u\,x_1^{-m+1}\zeta_1^2\|_{L^{1+\delta}(\Omega)}+\|\rho u\,x_1^{-m+1}\zeta_1^2\|_{L^{1}(\Omega)}\right).
\end{split}
\end{equation}

\textit{Step 3. Estimates on the principal term.}

With all preparations done, we go back to the principal term \eqref{C315}. In view of \eqref{C331}, we declare that
\begin{equation}\label{C333}
\begin{split}
&\int_\Omega(\rho u\cdot x_1^{-m+1}\zeta_1^2)\cdot\nabla\partial_j\Delta^{-1}(\widetilde{\rho^\theta})\cdot\widetilde{u}_j\,dx_1dx_2\\
&=\int_\Omega \nabla F \cdot\nabla\partial_j\Delta^{-1}(\widetilde{\rho^\theta})\cdot\widetilde{u}_j\,dx_1dx_2+\int_\Omega \nabla^\bot\pi \cdot\nabla\partial_j\Delta^{-1}(\widetilde{\rho^\theta})\cdot\widetilde{u}_j\,dx_1dx_2,\\
\end{split}
\end{equation}
with $\pi$ determined by \eqref{C331}.
The first term on the right hand side of \eqref{C333} satisfies
\begin{equation}\label{C338}
\begin{split}
&\int_\Omega\nabla F\cdot\nabla\partial_j\Delta^{-1}(\widetilde{\rho^\theta})\cdot \widetilde{u}_j\,dx_1dx_2\\
&=-\int_\Omega F\, \widetilde{u}\cdot\nabla \widetilde{\rho^\theta} \,dx_1dx_2-\int_\Omega F\, \nabla\partial_j\Delta^{-1}(\widetilde{\rho^\theta})\cdot\nabla \widetilde{u}_j \,dx_1dx_2\triangleq K_1+K_2.\\ 
\end{split}
\end{equation}

By virtue of \eqref{C111}, we deduce that
$$\widetilde{u}\cdot\nabla\widetilde{\rho^\theta}=
-\theta\,\widetilde{\mathrm{div}u}\cdot\widetilde{\rho^\theta}-(1+\theta)\,x_1^{-1}\widetilde{u}_1\cdot\widetilde{\rho^\theta}+ \widetilde{u}\cdot\nabla\log\eta\cdot \widetilde{\rho^\theta},$$
which combined with \eqref{C332} leads to
\begin{equation*}
\begin{split}
&|K_1|+|K_2|\\
&\leq \int_\Omega|F|\cdot \left(\widetilde{\rho^\theta}+|\nabla\partial_j\Delta^{-1}(\widetilde{\rho^\theta})|\right)\,\big(|\widetilde{u}|+|\nabla\widetilde{u}|\big) \,dx_1dx_2\\
&= \int_\Omega|\mathbf{E}_1(F)|\cdot \left(\widetilde{\rho^\theta}+|\nabla\partial_j\Delta^{-1}(\widetilde{\rho^\theta})|\right)\,\big(|\widetilde{u}|+|\nabla\widetilde{u}|\big) \,dx_1dx_2\\
&\leq\int_\Omega\big(|\mathbf{E}_2(F)|+|\mathbf{R}(F)|\big)\cdot\left(\widetilde{\rho^\theta}+|\nabla\partial_j\Delta^{-1}(\widetilde{\rho^\theta})|\right)\,\big(|\widetilde{u}|+|\nabla\widetilde{u}|\big) \,dx_1dx_2.\\
\end{split}
\end{equation*}

Next, we apply \eqref{C303}, \eqref{C334}, \eqref{C313}, \eqref{C329},  and \eqref{C328} to deduce that
\begin{equation*}
\begin{split}
&\int_\Omega |\mathbf{E}_2(F)| \cdot \left(\widetilde{\rho^\theta}+|\nabla\partial_j\Delta^{-1}(\widetilde{\rho^\theta})|\right)\,\big(|\widetilde{u}|+|\nabla\widetilde{u}|\big) \,dx_1dx_2\\
&\leq C\,\|\mathbf{E}_2(F)\big\|_{L^{q}(\Omega)}\big\|\widetilde{u}\big\|_{H^{1}(\Omega)}+C\big\|\mathbf{E}_2(F)\big\|_{L^{s}(\Omega)}\,\big\|\nabla u\big\|_{\mathcal{L}^2(\Omega)}\\
&\leq C\left(\|\rho u_2^2\,x_1^{-m+1}\zeta_1^2\|_{L^{1}(\Omega)}+\,\|\rho u\,x_1^{-m+1}\zeta_1^2\|^2_{L^{1+\delta}(\Omega)}+1\right),
\end{split}
\end{equation*}
as well as the estimates
\begin{equation*}
\begin{split}
&\int_\Omega |\mathbf{R}(F)| \cdot \left(\widetilde{\rho^\theta}+|\nabla\partial_j\Delta^{-1}(\widetilde{\rho^\theta})|\right)\,\big(|\widetilde{u}|+|\nabla\widetilde{u}|\big) \,dx_1dx_2\\
&\leq C\,\|\mathbf{R}(F)\|_{L^\infty(\Omega)}\,\|\tilde{u}\|_{H^1(\Omega)}\\
&\leq C\left(\|\rho u\,x_1^{-m+1}\zeta_1^2\|^2_{L^{1+\delta}(\Omega)}+\|\rho u\,x_1^{-m+1}\zeta_1^2\|^2_{L^{1}(\Omega)}+1\right).
\end{split}
\end{equation*}

Moreover, observe that for $\kappa=2\delta(1+\theta)/(\theta+2\delta\theta)<1/(100m)$,
\begin{equation*}
\begin{split}
\|\rho\cdot\zeta_1^2\|_{L^{1+2\delta}(\Omega)}
&\leq C\left(\int_\Omega\rho\,x_1dx_1dx_2\right)^{1-\kappa}\left(\int_\Omega\rho^{1+\theta}x_1^{-m-1}\zeta_1^2dx_1dx_2\right)^{\frac{\kappa}{1+\theta}}\\
&\leq C\left(\int_\Omega\rho^{1+\theta}x_1^{-m-1}\zeta_1^2\,dx_1dx_2\right)^{\frac{\kappa}{1+\theta}},
\end{split}
\end{equation*}
which combined with \eqref{C334} yields
\begin{equation}\label{C336}
\begin{split}
\|\,\rho u\cdot x_1^{-m+1}\zeta_1^2\,\|^2_{L^{1+\delta}(\Omega)}
&\leq C\,\|\,\rho\cdot x_1^{-m+2}\zeta_1^2\,\|^2_{L^{1+2\delta}(\Omega)}\|\widetilde{u}\|^2_{H^1(\Omega)}\\
&\leq C\left(\int_\Omega\big(\rho^{1+\theta}+\varepsilon\rho^{\alpha+\theta}\big)x_1^{-m-1}\zeta_1^2dx_1dx_2\right)^{\frac{2\kappa}{1+\theta}+\frac{\alpha }{\alpha+\theta}}\\
&\leq \epsilon\int_\Omega\left(\rho^{1+\theta}+\varepsilon\rho^{\alpha+\theta}\right)x_1^{-m-1}\zeta_1^2\,dx_1dx_2+C.
\end{split}
\end{equation}
Similar process also gives
\begin{equation}\label{C337}
\begin{split}
&\|\rho |u|^2\,x_1^{-m+1}\zeta_1^2\|_{L^{1}(\Omega)}+\|\rho u\,x_1^{-m+1}\zeta_1^2\|^2_{L^{1}(\Omega)}\\
&\leq \epsilon\int_\Omega\left(\rho^{1+\theta}+\varepsilon\rho^{\alpha+\theta}\right)x_1^{-m-1}\zeta_1^2\,dx_1dx_2+C.
\end{split}
\end{equation}

Substituting \eqref{C336} and \eqref{C337} into \eqref{C338}, we arrive at
\begin{equation}\label{C339}
\begin{split}
&\int_\Omega\nabla F\cdot\nabla\partial_j\Delta^{-1}(\widetilde{\rho^\theta})\cdot \widetilde{u}_j\,dx_1dx_2\leq \epsilon\int_\Omega\left(\rho^{1+\theta}+\varepsilon\rho^{\alpha+\theta}\right)x_1^{-m-1}\zeta_1^2dx_1dx_2+C.
\end{split}
\end{equation}
Following the same approach given above, we declare  that \eqref{C339} is valid if we replace $\nabla F$ by $\nabla^\bot\pi$. Consequently, we apply \eqref{C340}, \eqref{C337}, and \eqref{C339} to conclude that
\begin{equation}\label{C341}
\begin{split}
&J_3\leq \epsilon\int_\Omega\big(\rho^{1+\theta}+\varepsilon\rho^{\alpha+\theta}\big)x_1^{-m-1}\zeta_1^2dx_1dx_2+C.
\end{split}
\end{equation}

Setting $\epsilon<1/4$, \eqref{C341} together with the estimates of $J_0$, $J_1$, and $J_2$ leads to
\begin{equation*}
\int_\Omega\big(\rho^{1+\theta}+\varepsilon\rho^{\alpha+\theta}\big)\,x_1^{-m-1} \zeta_1^2\,dx_1dx_2\leq C,
\end{equation*}
which is \eqref{C311}. The proof is therefore completed.
\end{proof}

We end this section by collecting all estimates obtained above in the next corollary. 
\begin{corollary} 
Under the conditions of Lemma \ref{T1}, there is a constant $C$ depending on $\Phi$ such that
\begin{equation}\label{C342}
\begin{split}
&\int_\Omega\big(\rho u_1^2+P\big)\,\big(x_1^{\varepsilon_0}\cdot\zeta_0+x_1^{-9/8}\cdot\zeta_1\big)\,dx_1dx_2\leq C,\\
\int_\Omega\rho&|u|^2\Phi(x_1)\cdot\zeta_1\,dx_1dx_2\leq C,~~\|u\cdot x_1^{-1}\|_{H^1([1/2,\infty)\times\mathbb{T})}\leq C.
\end{split}
\end{equation}
\end{corollary}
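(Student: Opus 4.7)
This corollary is essentially a bookkeeping consequence of the results already established in this section; no new analysis is required beyond a short Poincar\'e-type step on the slab $[1/2,2]\times\mathbb{T}$.

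For the first inequality, I would handle the two pieces of the weight separately. The near-axis piece with weight $x_1^{\varepsilon_0}\zeta_0$ is the first bound of Proposition \ref{P32}, provided that $\int_A\rho u_1^2\,dx_1dx_2$ is uniformly bounded; this bound is in fact furnished during the proof of Proposition \ref{P52}, where estimate \eqref{C515} together with \eqref{C516} yields $\int_A\rho u_1^2\,dx_1dx_2\le C$. The far-field piece with weight $x_1^{-9/8}\zeta_1$ is precisely \eqref{C518}, once one observes that the cutoff $\eta(x_1)=\zeta_1(4x_1)$ used there dominates $\zeta_1$ pointwise. The second inequality then follows immediately from Lemma \ref{L31}, which gives the stronger bound $\int_\Omega\rho|u|^2\Phi\,dx_1dx_2\le C$ on all of $\Omega$; inserting the cutoff $\zeta_1\le 1$ only weakens the integral.

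For the third inequality, I would apply the product rule
$$\nabla(u\,x_1^{-1})=x_1^{-1}\nabla u-x_1^{-2}\,u\otimes e_1$$
and estimate each piece in $L^2([1/2,\infty)\times\mathbb{T})$. Since $1\le 2x_1$ on this region, Proposition \ref{P31} controls $\|x_1^{-1}\nabla u\|_{L^2}$ through $\|\nabla u\|_{\mathcal{L}^2}$. For $\|x_1^{-2}u_1\|_{L^2}$ (and for the $L^2$-norm of $u_1\,x_1^{-1}$ itself), I combine $x_1^{-4}\le 8x_1^{-1}$ on $[1/2,\infty)$ with $\int u_1^2/x_1\,dx_1dx_2\le C$ from Proposition \ref{P31}. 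The axial component is the only nontrivial piece: on $[2,\infty)\times\mathbb{T}$ estimate \eqref{C334} already supplies the required $L^2$ bounds on $u_2\,x_1^{-1}$ and $u_2\,x_1^{-2}$; on the slab $[1/2,2]\times\mathbb{T}$ I would use a standard Poincar\'e argument on the bounded 2D domain $[1/2,3]\times\mathbb{T}$, transferring the $L^2$ bound on $u_2|_{[2,3]\times\mathbb{T}}$ (a direct byproduct of \eqref{C334}) by means of the local $L^2$ bound on $\nabla u_2$ coming from Proposition \ref{P31}. I expect this Poincar\'e transfer to be the only step that is not pure substitution; every other ingredient of the corollary is already in hand.
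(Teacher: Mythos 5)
Your treatment of the first and third bounds is correct and in fact spells out more clearly than the paper's terse citation why \eqref{C305} can be combined with the chain \eqref{C515}--\eqref{C518} from the proof of Proposition \ref{P52}: the fixed point in \eqref{C515}--\eqref{C516} closes the $\int_A\rho u_1^2$ dependence, and $\eta(x_1)=\zeta_1(4x_1)$ does dominate $\zeta_1$ pointwise, so \eqref{C518} gives the $x_1^{-9/8}\zeta_1$ piece. The Poincar\'e transfer on $[1/2,3]\times\mathbb{T}$ for $u_2$, feeding into the third bound, is also the right move and is what the paper leaves implicit when it cites only \eqref{C303} and \eqref{C334}.

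However, your argument for the second bound is circular. You invoke Lemma \ref{L31} to get $\int_\Omega\rho|u|^2\Phi\,dx_1dx_2\le C$ and then insert $\zeta_1\le 1$. But look at the logical order of the section: Lemma \ref{L31} is not proved until the very end of Subsection \ref{SS32}, where its proof reads ``Combining \eqref{C342} and \eqref{C351}\ldots,'' i.e.\ it explicitly uses the present corollary as one of its two inputs (the other being the near-axis bound \eqref{C351}, which itself is established in Subsection \ref{SS32} \emph{after} this corollary and also uses \eqref{C342} repeatedly). The corollary you are proving is one of the building blocks of Lemma \ref{L31}, not a consequence of it, so you cannot appeal to \eqref{C310} here. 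The intended route for the second bound is the far-field machinery already available at this point in the text: \eqref{C311} says $\int_\Omega(\rho^{1+\theta}+\varepsilon\rho^{\alpha+\theta})\,x_1^{-m-1}\zeta_1^2\,dx_1dx_2\le C$, and feeding this into the interpolation estimate \eqref{C337} produces $\|\rho|u|^2\,x_1^{-m+1}\zeta_1^2\|_{L^1(\Omega)}\le C$; taking $m=6$ and using $\Phi(x_1)\le C\,x_1^{-5}$ on $\mathrm{supp}\,\zeta_1\subset[1,\infty)$ then gives $\int_\Omega\rho|u|^2\Phi\,\zeta_1^2\,dx_1dx_2\le C$, which is the version actually used downstream (the proof of Lemma \ref{L31} relies on $\zeta_0+\zeta_1^2\ge 3/4$). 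If you want the bound with $\zeta_1$ rather than $\zeta_1^2$ as literally stated, you must in addition control $\int_{[1,2]\times\mathbb{T}}\rho u_2^2\,dx_1dx_2$, which does not come for free from \eqref{C337}; in any case, replacing the appeal to Lemma \ref{L31} by the chain \eqref{C311}$\to$\eqref{C337} is the essential fix.
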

\begin{proof}
Combining \eqref{C303}, \eqref{C305}, \eqref{C334}, \eqref{C311}, and \eqref{C337} immediately yields \eqref{C342}.
\end{proof}

\subsection{Estimates near the symmetric axis}\label{SS32}
\quad The methods in this subsection are also independent of the behaviour of $\rho$ at the far field.
We first establish the near side version of Proposition \ref{P35}, where another weight is required to control $\|u_2\|_{L^2([0,4]\times\mathbb{T})}$.
\begin{proposition} 
Under the conditions of Lemma \ref{T1}, there is a constant $C$, such that for any $x_1\in[0,4]$,
\begin{equation}\label{C343}
\begin{split}
&x_1\cdot\bigg|\int_0^4 u_2(x_1,x_2)\,dx_2\bigg|\leq C\int_\Omega\rho|u|^2\Phi \,dx_1dx_2+C.\\
\end{split}
\end{equation}
In particular, we also have
\begin{equation}\label{C344}
\int_0^4\int_0^1(|u|^2+|\nabla u|^2)\,x_1^2\,dx_1dx_2\leq C\left(\int_\Omega\rho|u|^2\Phi \,dx_1dx_2\right)^2+C.
\end{equation}
\end{proposition}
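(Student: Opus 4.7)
The plan is to build \eqref{C343} from the differential identity
\begin{equation*}
\partial_1\Big(\int_0^1 u_2(x_1,x_2)\,dx_2\Big)=\int_0^1 \rho u_1 u_2\,dx_2,\quad\forall x_1\in[0,\infty),
\end{equation*}
already derived in Proposition \ref{P35} by integrating \eqref{C301}$_3$ in $x_2$ and using the axis boundary condition to kill the constant of integration. Writing $\bar u_2(x_1):=\int_0^1 u_2(x_1,x_2)\,dx_2$ and using the endpoint bound $|\bar u_2(4)|\leq C$ that Proposition \ref{P35} also supplies, I integrate the identity from $x_1$ to $4$ to get, for $x_1\in[0,4]$,
\begin{equation*}
\bar u_2(x_1)=\bar u_2(4)-\int_{x_1}^{4}\int_0^1\rho u_1 u_2\,ds\,dx_2.
\end{equation*}

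To prove \eqref{C343} I multiply by $x_1$ and use the elementary fact $x_1\leq s$ for $s\in[x_1,4]$, which lets the outer weight $x_1$ be absorbed into the $s$-integral. Combined with $|u_1 u_2|\leq (u_1^2+u_2^2)/2$, this gives
\begin{equation*}
x_1\,|\bar u_2(x_1)|\leq C x_1+\frac{1}{2}\int_0^{4}\int_0^1\rho|u|^2\,s\,dx_2\,ds.
\end{equation*}
Since $\Phi(x_1)=x_1$ on $[0,2]$ and $\Phi$ is smooth and strictly positive on $[2,4]$, the weights $s$ and $\Phi(s)$ are comparable on $[0,4]$, so the right-hand side is bounded by $C+C\int_\Omega \rho|u|^2\Phi\,dx_1dx_2$, producing \eqref{C343}.

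For the weighted $L^2$ bound \eqref{C344} I split the integrand into contributions from $u_1$, $u_2$, and $\nabla u$. The $\nabla u$ piece follows at once from \eqref{C303}, because $x_1^2\leq 4x_1$ on $[0,4]$. For $u_1$ I use the Hardy-type term in \eqref{C303}: $\int_0^{4}\!\int_0^1 u_1^2\,x_1^2\,dx_1dx_2=\int_0^{4}\!\int_0^1 (u_1^2/x_1)\,x_1^3\,dx_1dx_2\leq 64\int_\Omega u_1^2/x_1\,dx_1dx_2\leq C$. For $u_2$ I decompose $u_2=\bar u_2+(u_2-\bar u_2)$: Poincar\'e in $x_2$ controls the oscillation part by $\int |\partial_2 u_2|^2 x_1^2\leq 4\int|\nabla u|^2 x_1\leq C$, while the average part is controlled by \eqref{C343}:
\begin{equation*}
\int_0^{4}\bar u_2(x_1)^{2}x_1^{2}\,dx_1\leq 4\sup_{x_1\in[0,4]}|x_1\bar u_2(x_1)|^{2}\leq C\Big(\int_\Omega\rho|u|^2\Phi\,dx_1dx_2\Big)^{2}+C.
\end{equation*}
Summing these three contributions yields \eqref{C344}.

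The main obstacle is the interplay between the singular weight $x_1^{-1}$ of cylindrical coordinates near the axis and the decay weight $\Phi$ appearing on the right. The cancellation $\int_0^1 \rho u_1\,dx_2\equiv 0$ established in Proposition \ref{P35} is what underwrites the identity $\partial_1\bar u_2=\int_0^1 \rho u_1 u_2\,dx_2$, but that alone does not close the argument; the decisive step is trading the external factor $x_1$ for the internal variable $s$ inside the integral from $x_1$ to $4$, which converts the raw product $x_1\cdot\int_{x_1}^4\int_0^1\rho u_1 u_2\,dsdx_2$ into exactly the weighted kinetic energy $\rho|u|^2\Phi$ on $[0,4]$. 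Without the compensating factor $x_1$ on the left of \eqref{C343}, this bookkeeping would fail and $\bar u_2$ would in principle be allowed to behave like $1/x_1$ near the axis, matching the threshold the statement tolerates.
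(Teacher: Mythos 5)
Your proposal is correct and follows essentially the same route as the paper: integrate the identity $\partial_1\bar u_2=\int_0^1\rho u_1u_2\,dx_2$ from Proposition \ref{P35} on $[x_1,4]$, use the endpoint bound \eqref{C316} at $x_1=4$, absorb the prefactor $x_1$ via $x_1\leq s$, compare $s$ with $\Phi(s)$ on $[0,4]$ to obtain \eqref{C343}, and then get \eqref{C344} by splitting $u_2$ into mean and oscillation and combining with \eqref{C303} (the paper leaves the $u_1$ and $\nabla u$ contributions implicit in "combined with \eqref{C303}", which you spell out). No gaps.
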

\begin{proof}
Recalling that \eqref{C318} together with \eqref{C342} ensures that, for $x_1\in [0,4]$,
\begin{equation*}
\begin{split}
&x_1\cdot\bigg|\int_0^1 u_2(x_1,x_2)\,dx_2\bigg|\\
&\leq x_1\cdot\bigg|\int_0^1 u_2(4,x_2)\,dx_2\bigg|+x_1\cdot\bigg|\int_{x_1}^1\int_0^4\rho u_1u_2\,dx_1dx_2\bigg|\\
&\leq C+\int_0^1\int_0^1\rho|u|^2x_1dx_1dx_2,
\end{split}
\end{equation*}
which provides \eqref{C343}. To illustrate \eqref{C344}, let us apply \eqref{C303} to deduce that
\begin{equation*}
\begin{split}
&\int_{0}^4\int_0^1 u_2^2\cdot x_1^{2} \,dx_1dx_2\\
&\leq C\int_{0}^4\int_0^1\big(u_2-\int_0^1 u_2\,dx_2\big)^2\cdot x_1^{2} \,dx_1dx_2+C\int_{0}^4\left(x_1\cdot\int_0^1 u_2\,dx_2\right)^2 \,dx_1\\
&\leq C\int_\Omega|\partial_2 u_2|^2x_1dx_1dx_2+C\int_{0}^4\left(x_1\cdot\int_0^1 u_2\,dx_2\right)^2\,dx_1\\
&\leq C\left(\int_\Omega\rho|u|^2\Phi\,dx_1dx_2\right)^2+C,
\end{split}
\end{equation*}
which combined with \eqref{C303} yields \eqref{C344}.
We therefore complete the proof. 
\end{proof}

Recalling that $\hat\Omega=\mathbb{R}^2\times\mathbb{T}$, we suppose $\hat x=(\hat{x}',\hat{x}_3)\in\mathbb{R}^2\times\mathbb{T}$ and $x\in\mathbb{R}^3$. Let us introduce two truncation functions $\hat\zeta_0(\hat x),\,\hat\psi(\hat x)\in C^\infty(\hat\Omega)$ by
\begin{equation}\label{C370}
\begin{cases}
\hat\zeta_0=1~~|\hat{x}'|\leq 12,\\
\hat\zeta_0=0~~|\hat{x}'|\geq 15,\\
0\leq\hat\zeta_0\leq 1~~\forall\hat x\in\hat\Omega,
\end{cases}
\end{equation}
\begin{equation}\label{C378}
\begin{cases}
\hat\psi=1~~|x|\leq 1/4,\\
\hat\psi=0~~|x|\geq 1/2,\\
0\leq\hat\psi\leq 1~~\forall x\in\mathbb{R}^3.
\end{cases}
\end{equation}
We mention that $\hat\zeta_0$ and $\hat\psi$ are both periodic in $\hat x_3$ variable which provide  truncations near the symmetric axis and the origin in $\hat\Omega$ respectively.
\begin{proposition}
Under the conditions of Lemma \ref{T1}, for $0<\beta<1$ and $m\geq 6$, there is a constant $C$ depending on $\beta$ such that, $\forall\tilde{x}\in\Omega$
\begin{equation}\label{C345}
\begin{split}
&\int_\Omega\frac{\rho|u|^2+P}{|x-\tilde{x}|^\beta}\,\hat\psi(x-\tilde{x})\,x_1^{m}\cdot\zeta_0\,dx_1dx_2 \leq C\int_\Omega\rho |u|^2\Phi\,dx_1dx_2+C.\\
\end{split}
\end{equation}

Moreover, if we go back to $\hat\Omega$ and define $(\hat\rho,\hat{u})$ by \eqref{C346},
 the potential estimates hold that, $\forall\tilde{x}\in\hat\Omega$,
\begin{equation}\label{C347}
\begin{split}
&\int_{\hat\Omega}\frac{\hat\rho(|\hat u|^2+1)}{|\hat x-\tilde{x}|^\beta}\,\hat\psi(\hat x-\tilde{x})\cdot\hat\zeta_0\, d\hat x \leq C\int_\Omega\rho |u|^2\Phi\,dx_1dx_2+C.\\
\end{split}
\end{equation}
\end{proposition}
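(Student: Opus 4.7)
The argument parallels that of Proposition \ref{P33}, with the roles of the truncations $\zeta_0$ and $\zeta_1$ interchanged, and is supplemented by an independent 3D test-function calculation for \eqref{C347}. For \eqref{C345}, my plan is to add \eqref{C301}$_{i+1}$ multiplied by $\varphi_i=(x_i-\tilde x_i)/|x-\tilde x|^\beta\cdot\hat\psi(x-\tilde x)\cdot x_1^m\zeta_0$ for $i=1,2$, and integrate over $\Omega$. The choice $m\geq 6$ ensures admissibility: $\varphi_1=O(x_1^m)$ and $\partial_1\varphi_2=O(x_1^{m-1})$ vanish at $\{x_1=0\}$, and the weight $x_1^{m-2}\zeta_0$ remains bounded, which neutralises the axis singularity $u_1/x_1^2$ in \eqref{C301}$_2$. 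The algebraic identity \eqref{C348} then produces the coercive left-hand side $\int_\Omega(\rho|u|^2+P)/|x-\tilde x|^\beta\cdot\hat\psi\,x_1^m\zeta_0\,dx_1dx_2$ up to a positive multiplicative constant.

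The resulting error terms split into three groups. The external-force term is bounded by $\|g\|_{\mathcal{L}^\infty}$ times the $L^1$-norm of the test. The Laplacian and axis contributions are handled via Cauchy--Schwarz against the basic energy \eqref{C303}, using the local $L^2$-integrability of $|x-\tilde x|^{-\beta}$ in two dimensions for $\beta<1$ together with the boundedness of $x_1^{m-1}\zeta_0$. The delicate flux contributions $\int\rho u_iu_j(x_k-\tilde x_k)/|x-\tilde x|^\beta\cdot\partial_k(\hat\psi\,x_1^m\zeta_0)\,dx_1dx_2$ are localised on $\mathrm{supp}\,\nabla(\hat\psi\,x_1^m\zeta_0)\subset\{|x-\tilde x|\geq 1/4\}\cup\{x_1\in[1,2]\}$. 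On the first piece $|x-\tilde x|^{-\beta}$ is uniformly bounded, and on the second piece $x_1^m\zeta_0$ is bounded so that \eqref{C342} applies directly. In both cases the bound is $\leq C\int_\Omega\rho|u|^2\Phi\,dx_1dx_2+C$, after absorbing a small pressure contribution via Young's inequality; this yields \eqref{C345}.

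For \eqref{C347}, I work directly on the 3D system \eqref{C350}, testing the $j$-th equation against $(\hat x_j-\tilde x_j)/|\hat x-\tilde x|^\beta\cdot\hat\psi(\hat x-\tilde x)\cdot\hat\zeta_0(\hat x)$ and summing in $j$. The 3D analogue of \eqref{C348} produces the coercive terms $(1-\beta)\int_{\hat\Omega}\hat\rho|\hat u|^2/|\hat x-\tilde x|^\beta\hat\psi\hat\zeta_0\,d\hat x$ and $(3-\beta)\int_{\hat\Omega}\hat P/|\hat x-\tilde x|^\beta\hat\psi\hat\zeta_0\,d\hat x$; since $\hat P\geq\hat\rho$, their sum dominates the left side of \eqref{C347}. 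The $\hat g$ and $\Delta\hat u$ contributions are controlled via the 3D energy identity $\|\nabla\hat u\|_{L^2(\hat\Omega)}^2=2\pi\int(|\nabla u|^2x_1+u_1^2/x_1)dx_1dx_2$ from \eqref{C303}, exploiting the local $L^2$-integrability of $|\hat x-\tilde x|^{-\beta}$ in three dimensions for $\beta<1$. The truncation error terms are supported where either $|\hat x-\tilde x|\sim 1$ or $|\hat x'|\in[12,15]$; on both regions $|\hat x-\tilde x|^{-\beta}$ is bounded, and converting via $d\hat x=2\pi x_1\,dx_1dx_2\,d\theta$ yields a $\rho|u|^2$-integral on a bounded cylindrical region, which is absorbed into $\int\rho|u|^2\Phi$ near the axis (where $\Phi\sim x_1$) and controlled by \eqref{C342} away from it. The main technical subtlety throughout is to ensure that $\nabla(\text{test})$ never reintroduces a factor $|x-\tilde x|^{-\beta}$ or $|\hat x-\tilde x|^{-\beta}$ that fails to be bounded; this is arranged by the scale separation between $\hat\psi$ (localising at distance $1/2$ from $\tilde x$) and the axis truncations $\zeta_0$, $\hat\zeta_0$, so that on the support of $\nabla(\text{test})$ at least one of them is already smooth.
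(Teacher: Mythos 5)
Your plan reproduces the paper's strategy for both inequalities: for \eqref{C345} test the 2D momentum equations against $(x_j-\tilde x_j)/|x-\tilde x|^\beta\cdot\hat\psi(x-\tilde x)\cdot x_1^m\zeta_0$ and use \eqref{C348}; for \eqref{C347} test the 3D system \eqref{C350} against $(\hat x_j-\tilde x_j)/|\hat x-\tilde x|^\beta\cdot\hat\psi\cdot\hat\zeta_0$; then bound the remainders by \eqref{C303} and \eqref{C342}. That is exactly how the paper proceeds, so the overall route is right. But the mechanism you single out in the final paragraph as ``the main technical subtlety'' is incorrect. The inclusion $\mathrm{supp}\,\nabla(\hat\psi\,x_1^m\zeta_0)\subset\{|x-\tilde x|\geq 1/4\}\cup\{x_1\in[1,2]\}$ fails, because $\partial_1(x_1^m\zeta_0)$ contains $m\,x_1^{m-1}\zeta_0$, which is nonzero on all of $(0,2)$ — in particular at $\tilde x$ itself whenever $\tilde x$ lies near the axis. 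Similarly, in the 3D argument $|\hat x-\tilde x|^{-\beta}$ is \emph{not} uniformly bounded on $\{|\hat x'|\in[12,15]\}$ when $\tilde x$ lies in that annulus. So the claimed scale separation between $\hat\psi$ and the axis truncations does not by itself protect the flux remainders from the $|x-\tilde x|^{-\beta}$ singularity.

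Fortunately this gap is easily closed, and the correct observation is what the paper implicitly uses: every flux remainder carries the factor $(x_j-\tilde x_j)$ in the numerator, so that on $\mathrm{supp}\,\hat\psi$
\[
\left|\frac{x_j-\tilde x_j}{|x-\tilde x|^{\beta}}\,\hat\psi(x-\tilde x)\right|\le |x-\tilde x|^{1-\beta}\le\left(\tfrac12\right)^{1-\beta},
\]
uniformly in $\tilde x$, with no localisation needed at all. The missed term $m\,x_1^{m-1}\zeta_0$ is then bounded by $C\,\Phi$ (since $m-1\ge1$ and $\Phi=x_1$ on $[0,2]$), giving a contribution $\le C\int_\Omega\rho|u|^2\Phi\,dx_1dx_2$ — the same bound as the other pieces. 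The genuine $|x-\tilde x|^{-\beta}$ only appears when the gradient hits the kernel $(x_j-\tilde x_j)/|x-\tilde x|^\beta$ in the viscous term, which you already handle correctly via $L^2_{loc}$-integrability. With this substitution for the (non-load-bearing) scale-separation claim, your argument coincides with the paper's proof.
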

\begin{proof}
Let us repeat the proof of Proposition \ref{P33} and change the weight $x^{-m}\zeta_1^2$ by $x^{m}\zeta_0$. The process similar with \eqref{C308} combined with \eqref{C342} leads to
\begin{equation*} 
\begin{split}
&(1-\beta)\cdot\int_\Omega\frac{\rho|u|^2+P}{|x-\tilde{x}|^\beta}\,\psi(x-\tilde{x})\,x_1^{m}\cdot\zeta_0\,dx_1dx_2 \leq C\int_\Omega\rho |u|^2\Phi\,dx_1dx_2+C,\\
\end{split}
\end{equation*}
which gives \eqref{C345} and finish the first part.

Next, according to Remark \ref{R12}, $(\hat\rho,\hat u)$ is a weak solution to \eqref{C350} in $\hat\Omega$. Thus, following the process of \cite[Lemma 2.2]{JZ}, we multiply \eqref{C350}$_2$ by 
$(\hat{x}-\tilde{x})/|\hat x-\tilde{x}|^\beta\cdot\hat\psi(\hat x-\tilde{x})\cdot\hat\zeta_0(\hat x)$ and deduce that
\begin{equation}\label{C349}
\begin{split}
&\int_{\hat\Omega}\big(\hat\rho\,\hat{u}_i\hat{u}_j
+\hat P\delta_{ij}\big)\cdot\partial_i\big(\frac{\hat x_j-\tilde{x}_j}{|\hat x-\tilde{x}|^\beta}\big)\cdot\hat\psi(\hat x-\tilde{x})\cdot \hat\zeta_0\,d\hat x\\
&\leq\int_{\hat\Omega}(|\hat{g}|+\hat\rho\,|\hat u|^2+\hat P+\frac{|\nabla\hat u|}{|\hat x-\tilde{x}|^\beta}\big)\,(|\hat\psi\cdot\hat\zeta_0|+|\nabla(\hat\psi\cdot\hat\zeta_o)|)\,d\hat x\\
&\leq C\left(\|\hat\rho\,|\hat u|^2\|_{L^1(Q\times\mathbb{T})}+\|\hat P\|_{L^1(Q\times\mathbb{T})}+\|\nabla\hat u\|_{L^2(Q\times\mathbb{T})}+1\right),
\end{split}
\end{equation}
where $Q=[-15,15]\times[-15,15]\subset\mathbb{R}^2$. By making use of \eqref{C348}, we transform the right hand side of \eqref{C349} into $\Omega$ and arrive at
\begin{equation*}
\begin{split}
&\int_{\hat\Omega}\frac{\hat\rho(|\hat u|^2+1)}{|\hat x-\tilde{x}|^\beta}\,\hat\psi(\hat x-\tilde{x})\cdot\hat\zeta_0\, d\hat x\\
&\leq C\int_\Omega\rho|u|^2\Phi\,dx_1dx_2+C\int_0^{30}\int_0^1\big(1+P\,x_1+|\nabla u|^2x_1+|u_1|^2x_1^{-1}\big)\,dx_1dx_2\\
&\leq C\int_\Omega\rho|u|^2\Phi\,dx_1dx_2+C,
\end{split}
\end{equation*}
where the last line is due to \eqref{C303} and \eqref{C342}. The proof is therefore completed.
\end{proof}

Finally, we are in a position to close the estimates of $\rho|u|^2$. In view of \eqref{C342}, it is sufficient to control the part near the axis.
\begin{proposition}\label{P38}
Under the conditions of Lemma \ref{T1}, there is a constant $C$ depending on $\Phi(x_1)$, such that
\begin{equation}\label{C351}
\int_\Omega\rho|u|^2\Phi\cdot\zeta_0\,dx_1dx_2\leq C.
\end{equation}
\end{proposition}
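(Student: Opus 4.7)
The plan is to transfer the question to the three-dimensional axisymmetric formulation \eqref{C350} via \eqref{C346}: since $\Phi(x_1)\zeta_0 \lesssim x_1 \cdot \mathbf{1}_{\{x_1 \leq 2\}}$, the target estimate \eqref{C351} is equivalent, up to constants, to the uniform bound
\begin{equation*}
\int_{\hat\Omega} \hat\rho |\hat u|^2 \hat\zeta_0 \, d\hat x \leq C
\end{equation*}
on a bounded three-dimensional neighborhood of the symmetry axis. By \eqref{C342}, the radial part (corresponding in cylindrical coordinates to $\int \rho u_1^2 x_1^{\varepsilon_0}\zeta_0\,dx_1dx_2$) is already under control, so the genuinely new task is to dominate the axial contribution $\int_{\hat\Omega} \hat\rho \hat u_3^2 \hat\zeta_0 \, d\hat x$ uniformly in $\varepsilon$.

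To this end I would introduce a Bogovskii-type test function on a bounded subdomain $K \subset \hat\Omega$ containing $\mathrm{supp}\,\hat\zeta_0$: construct $\varphi$ with $\varphi|_{\partial K} = 0$ satisfying
\begin{equation*}
\mathrm{div}\,\varphi = \hat\rho \hat\zeta_0^2 - c_\varepsilon,\qquad \|\nabla\varphi\|_{L^p(K)} \leq C\,\|\hat\rho \hat\zeta_0^2\|_{L^p(K)},
\end{equation*}
where $c_\varepsilon$ normalizes the source and the bound follows from Calder\'on--Zygmund regularity. Testing the weak form of \eqref{C350} against $\varphi$ and using $\hat P = \hat\rho + \varepsilon\hat\rho^\alpha$ extracts from the pressure term the gain-of-integrability inequality
\begin{equation*}
\int_{\hat\Omega} \hat\rho^2 \hat\zeta_0^2 \, d\hat x + \varepsilon \int_{\hat\Omega} \hat\rho^{1+\alpha} \hat\zeta_0^2 \, d\hat x \leq \Big| \int_{\hat\Omega} \hat\rho \hat u_i \hat u_j \, \partial_j\varphi_i \, d\hat x \Big| + \Big| \int_{\hat\Omega} \nabla\hat u : \nabla\varphi \, d\hat x \Big| + \Big|\int_{\hat\Omega} \hat g \cdot \varphi\,d\hat x\Big| + C,
\end{equation*}
which is the only extra $\hat\rho$-integrability available at $\gamma = 1$. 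The viscous and forcing contributions are routine from \eqref{C303} and \eqref{C115}; the convective term splits into a radial piece bounded by \eqref{C342} and mixed and axial-axial pieces, both of which I would handle by H\"older's inequality combined with the three-dimensional Sobolev embedding $H^1(K) \hookrightarrow L^6(K)$ applied to $\hat u$, whose $H^1$ norm on $K$ follows from \eqref{C303} and \eqref{C344}.

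The closing step invokes the potential estimate \eqref{C347}: whenever an $L^\infty$-type quantity involving a Newton potential of $\hat\rho(|\hat u|^2+1)\hat\zeta_0$ is required, the interior elliptic bound \eqref{C209} converts it into a Morrey-type quantity controlled by $C\int_\Omega \rho|u|^2\Phi\,dx_1dx_2 + C$. Because \eqref{C342} already accounts for the $\zeta_1$ portion of $\int \rho|u|^2\Phi$, only the near-axis piece $\int \rho|u|^2\Phi\zeta_0$ remains, and by tuning the exponents in Young's inequality so that its coefficient stays strictly below one, we absorb it into the left-hand side to conclude \eqref{C351}. The principal obstacle is precisely this bootstrap balance: the gain from the pressure--test-function pairing, the critical 3D Sobolev exponent entering the convective term, and the Morrey conversion via \eqref{C347} must be calibrated simultaneously so that the coefficient of the absorbing term stays below one, and the $x_1$-singularity at the symmetry axis forces a careful choice of the cutoff $\hat\zeta_0$ and of the auxiliary truncations in the construction of $\varphi$.
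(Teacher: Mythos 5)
There is a genuine gap. Your Bogovskii ansatz with source $\hat\rho\hat\zeta_0^2$ is the classical Lions--Feireisl device that yields $\int\hat\rho^{\gamma+1}\hat\zeta_0^2$; at $\gamma=1$ this would give $\int\hat\rho^2\hat\zeta_0^2$ on the left, but the right-hand side cannot be controlled. The bound $\|\nabla\varphi\|_{L^p}\lesssim\|\hat\rho\hat\zeta_0^2\|_{L^p}$ is only useful if $\hat\rho$ is already in some $L^p$ with $p$ strictly above $1$ near the axis, which is exactly what is \emph{not} available for $\gamma=1$ (we only know the weighted $L^1$ bound \eqref{C342} with weight $x_1^{\varepsilon_0}\zeta_0$). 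Consequently the convective pairing $\int\hat\rho\hat u_i\hat u_j\partial_j\varphi_i$ cannot be closed by H\"older plus $H^1(K)\hookrightarrow L^6(K)$: with $\hat\rho|\hat u|^2\in L^1$ and $\nabla\varphi$ only controlled through $\hat\rho\in L^1$, no choice of H\"older exponents absorbs the term into $\int\hat\rho^2\hat\zeta_0^2$. This circularity is precisely the obstruction at the critical index, and the paper does not attempt to produce $\rho^2$.

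What the paper actually does (and what your proposal misses) is the following. In Step~1 the test function is $\varphi=\nabla\Delta^{-1}\bigl(\rho^\theta\eta\,x_1^2-\text{avg}\bigr)$ with the \emph{fractional} power $\theta=11/30$, so that $\|\rho^\theta\eta\,x_1\|_{L^{30/11}}\lesssim\|\rho\|_{\mathcal L^1}$ is controlled a priori, and the pressure pairing produces only $\int(\rho^{1+\theta}+\varepsilon\rho^{\alpha+\theta})\,x_1^{m+1}\zeta_0$; the convective term is then handled not by Sobolev embedding but by a Cauchy--Riemann decomposition of $\rho u\,x_1^{m-2}\zeta_0$ into $\nabla F+\nabla^\perp\omega$, periodic extensions, the near-axis potential estimate \eqref{C345}, and an interpolation between $L^1$ and the target quantity so that its coefficient is small. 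In Step~2 the key identity is the three-factor H\"older split \eqref{C379}--\eqref{C380} of $\int\rho|u|^2\Phi\zeta_0$ into powers of $\rho^{1+\theta}x_1^m\zeta_0$ (from Step~1), $\rho^\beta|u|^{2\beta+2}x_1^{3/2}\zeta_0$ (bounded via the 3D Dirichlet problem on $\hat B_7$, the $H^1$-extension $\hat v$, and \eqref{C347}--\eqref{C376}), and $\rho\,x_1^{\varepsilon_0}\zeta_0$ (from \eqref{C342}); the closure is the sublinear inequality $X\leq C(X+1)^{11/12}$, not a Young/$\epsilon$-absorption. Your proposal captures the flavor of Step~2's use of \eqref{C347} and of absorption, but it omits the fractional-power test function and the interpolation identity that are essential, and it proposes a Step~1 that fails at $\gamma=1$.
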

\begin{proof}
\textit{Step 1. The extra integrability of $\rho$ near the axis.}

We mainly follow the method given by Proposition \ref{P34}. Taking $\theta=11/30$ and $m\geq 6$, the task in this step is to derive that, 
\begin{equation}\label{C352}
\int_\Omega(\rho^{1+\theta}+\varepsilon\rho^{\alpha+\theta})\,x_1^{m+1}\zeta_0\,dx_1d_2\leq C\left(\int_\Omega\rho|u|^2\Phi\,dx_1dx_2\right)^4+C.
\end{equation} 

We define the truncation function $\eta(x_1)\triangleq\zeta_0(x_1/4)$ and introduce the test function
$$\varphi=\nabla\Delta^{-1}\big(\rho^\theta\eta\,x_1^2-\int_\Omega\rho^\theta\eta\,x_1^2\,dx_1dx_2\big),$$
where $\Delta^{-1}$ is the inversion of $\Delta$ on the torus $[0,8]\times\mathbb{T}$ introducing by Lemma \ref{L24}. Note that $\varphi$ is well defined, since the integral average has been module out.

We mention that $\varphi$ has zero average over $[0,8]\times\mathbb{T}$,
thus the elliptic estimates \eqref{C207} together with Sobolev embedding theorem implies that
\begin{equation}\label{C353}
\|\varphi\|_{L^{\infty}([0,8]\times\mathbb{T})}+\|\nabla\varphi\|_{L^{30/11}([0,8]\times\mathbb{T})}\leq C\,\|\rho^\theta\eta\,x_1\|_{L^{30/11}(\Omega)}\leq C.
\end{equation}

Adding \eqref{C301}$_2$ multiplied by $\varphi_1\cdot x_1^{m}\,\zeta_0$ and \eqref{C301}$_3$ multiplied by $\varphi_2\cdot x_1^{m}\,\zeta_0$ together, we integrate the result over $\Omega$ and apply the fact $\zeta_0\cdot\eta=\zeta_0$ to deduce that
\begin{equation}\label{C354}
\int_\Omega(\rho^{1+\theta}+\varepsilon\rho^{\alpha+\theta})\,x_1^{m+1}\zeta_0\,dx_1dx_2\leq L_0+L_1+L_2+L_3,
\end{equation}
where the remaining terms $L_i$ are given by
\begin{equation*}
\begin{split}
L_0&=\big(\int_\Omega P\cdot x_1^m\zeta_0\,dx_1dx_2\big)\cdot\big(\int_\Omega\rho^\theta
\eta\,x_1\,dx_1dx_2\big)-\int_\Omega g\cdot\varphi\cdot x_1^{m+1}\zeta_0\,dx_1dx_2,\\
L_1&=-\int_\Omega\partial_1(x_1\partial_1 u_i)\,\varphi_i\cdot x_1^{m-1}\zeta_0\,dx_1dx_2-\int_\Omega\partial_{22}u_i\cdot\varphi_i\cdot x_1^m\zeta_0\,dx_1dx_2,\\
L_2&=-\int_\Omega P\,\varphi\cdot\partial_i(x_1^{m}\zeta_0)\,dx_1dx_2+\int_\Omega u_1\cdot \varphi_1\cdot x_1^{m-2}\zeta_0\,dx_1dx_2,\\
L_3&=\int_\Omega\partial_1(x_1\rho u_1u_i)\,\varphi_i\cdot x_1^{m-1}\zeta_0\,dx_1dx_2+\int_\Omega\partial_2(\rho u_2u_i)\,\varphi_i\cdot x_1^{m}\zeta_0\,dx_1dx_2.\\
\end{split}
\end{equation*}
Similar with \eqref{C355} and \eqref{C356}, we apply \eqref{C303},  \eqref{C342}, and \eqref{C353} to derive that
\begin{equation}\label{C361}
\begin{split}
L_0+L_1+L_2\leq C.
\end{split}
\end{equation}

Then, we focus on the principal term $L_3$. Observe that
\begin{equation*}
\begin{split}
L_3=-\int_\Omega\rho u_iu_j\cdot\partial_i\varphi_j\cdot x_1^m\zeta_0\,dx_1dx_2-\int_\Omega\rho u_1u_i\cdot\varphi_i\cdot\partial_1(x_1^{m-1}\zeta_0)\,dx_1dx_2.
\end{split}
\end{equation*}
By means of \eqref{C342} and \eqref{C353}, we deduce that the second term is handled by
\begin{equation}\label{C357}
\int_\Omega\rho u_1u_i\cdot\varphi_i\cdot\partial_1(x_1^{m-1}\zeta_0)\,dx_1dx_2\leq C\int_\Omega\rho|u|^2\Phi\,dx_1dx_2+C.
\end{equation}
Moreover, let us rearrange the first term as
\begin{equation}\label{C358}
\begin{split}
&\int_\Omega\rho u_iu_j\cdot\partial_i\varphi_j\cdot x_1^m\,\zeta_0\,dx_1dx_2\\
&=\int_\Omega(\rho u\,x_1^{m-2}\zeta_0)\cdot\nabla\partial_j\Delta^{-1}\big(\widetilde{\rho^\theta}-\int_\Omega\widetilde{\rho^\theta}\,dx_1dx_2\big)\cdot \widetilde{u}_j\,dx_1dx_2,\\
\end{split}
\end{equation}
where we have applied the notation
$\widetilde{h}\triangleq(h\cdot x_1^2\,\zeta_0).$

Now, let us define the pair
\begin{equation*}
\begin{split}
&F=\Delta^{-1}\mathrm{div}(\rho u\,x_1^{m-2}\zeta_0),~~\omega=\Delta^{-1}\mathrm{rot}(\rho u\,x_1^{m-2}\zeta_0),\\
&\Rightarrow\int_{[0,8]\times\mathbb{T}}F\,dx_1dx_2=\int_{[0,8]\times\mathbb{T}}\omega\,dx_1dx_2=0.
\end{split}
\end{equation*}
Note that $(F,\omega)$ solves the Cauchy-Riemann equations on periodic domain $[0,8]\times\mathbb{T}$,
\begin{equation*}
\begin{split}
&\nabla F+\nabla^\bot\omega=\rho u\,x_1^{m-2}\zeta_0-\int_{[0,8]\times\mathbb{T}}\rho u\,x_1^{m-2}\zeta_0\,dx_1dx_2.\\
\end{split}
\end{equation*}
The elliptic estimates \eqref{C207} and Sobolev embedding theorem ensure that
\begin{equation}\label{C359}
\begin{split}
&\|F\|_{L^q([0,8]\times\mathbb{T})}+\|\omega\|_{L^q([0,8]\times\mathbb{T})}\leq C\|\rho u\,x_1^{m-2}\zeta_0\|_{L^{1+\delta}(\Omega)},
\end{split}
\end{equation}
with $1/q=1/(1+\delta)-1/2$.

Following the same calculation given by \eqref{C333} and \eqref{C338}, we apply $\nabla^\bot\circ\nabla=0$ and the equality
$$\widetilde{u}\cdot\nabla\widetilde{\rho^\theta}=
-\theta\,\widetilde{\mathrm{div}u}\cdot\widetilde{\rho^\theta}+(2-\theta)\,x_1^{-1}\widetilde{u}_1\cdot\widetilde{\rho^\theta}+ \widetilde{u}\cdot\nabla\log\eta\cdot \widetilde{\rho^\theta},$$
which is due to \eqref{C111}, to transform \eqref{C358} into
\begin{equation}\label{C367}
\begin{split}
&\int_\Omega\rho u_iu_j\cdot\partial_i\varphi_j\cdot x_1^m\,\zeta_0\,dx_1dx_2\\
&=\int_\Omega(\nabla F+\nabla^\bot\omega)\cdot\nabla\partial_j\Delta^{-1}\big(\widetilde{\rho^\theta}-\int_\Omega\widetilde{\rho^\theta}\,dx_1dx_2\big)\cdot \widetilde{u}_j\,dx_1dx_2+\mathcal{R}\\
&\leq C\big(\|F\|_{L^4([0,8]\times\mathbb{T})}+\|\omega\|_{L^4([0,8]\times\mathbb{T})}\big)\cdot\|\widetilde{\rho^\theta}\|_{L^2(\Omega)}\cdot\|\widetilde{u}\|_{H^1(\Omega)}+\mathcal{R},\\
\end{split}
\end{equation}
where the remaining term $\mathcal{R}$ satisfies 
\begin{equation*}
\begin{split}
\mathcal{R}&\leq C\,\|\rho u\,x_1^{m-2}\zeta_0\|_{L^1([0,8]\times\mathbb{T})} \cdot\|\widetilde{\rho^\theta}\|_{L^2(\Omega)}\cdot\|\widetilde{u}\|_{H^1(\Omega)}\\
&\leq C\left(\int_\Omega\rho|u|^2\Phi\,dx_1dx_2+1\right)^2,
\end{split}
\end{equation*}
which is due to \eqref{C344}.

To finish the argument, we follow \textit{Step 2.} of the proof for Proposition \ref{P34} to handle $\|F\|_{L^4([0,8]\times\mathbb{T})}$ and $\|\omega\|_{L^4([0,8]\times\mathbb{T})}$.
Let $\mathbf{E}_3(\cdot)$ be the periodic extension from the torus $[0,8] \times\mathbb{T}$ to $[-8,16]\times[-1,2]$. In particular, by virtue of \eqref{C359}, the next equation is valid in $[-8,16]\times[-1,2]$, 
$$\nabla\mathbf{E}_3(F)+\nabla^\bot\mathbf{E}_3(\omega)=\mathbf{E}_3\left(\rho u\,x_1^{m-2}\zeta_0-\int_{[0,8]\times\mathbb{T}}\rho u\,x_1^{m-2}\zeta_0\,dx_1dx_2\right),$$
and the estimates given below is true as well,
\begin{equation}\label{C362}
\|\mathbf{E}_3(F)\|_{L^q([0,8]\times\mathbb{T})}+\|\mathbf{E}_3(\omega)\|_{L^q([0,8]\times\mathbb{T})}\leq C\|\rho u\,x_1^{m-2}\zeta_0\|_{L^{1+\delta}(\Omega)}.
\end{equation}

Moreover, let us consider the Cauchy-Riemann equations on $\mathbb{R}^2$,
$$\nabla\mathbf{E}_2(F)+\nabla^\bot\mathbf{E}_2(\omega)=\mathbf{E}_3\left(\rho u\,x_1^{m-2}\zeta_0-\int_{[0,8]\times\mathbb{T}}\rho u\,x_1^{m-2}\zeta_0\,dx_1dx_2\right),$$
where the right hand side is zero out of $[-8,16]\times[-1,2]$. The standard theory of the singular integral \cite[Chapter VI]{SW} ensures that $\forall\tilde{x}\in\mathbb{R}^2$,
\begin{equation}\label{C363}
\begin{split}
&|\mathbf{E}_2(F)|(\tilde{x})+|\mathbf{E}_2(\omega)|(\tilde{x})\\
&\leq C\int_{\mathbb{R}^2}\frac{\mathbf{E}_3\big(\rho u\cdot x_1^{m-2}\zeta_0\big)}{|x-\tilde{x}|}dx+C\int_\Omega\rho|u|^2\Phi\,dx_1dx_2.
\end{split}
\end{equation}

Setting $\beta\tilde{\theta}+(2-\epsilon)(1-\tilde{\theta})=1$, we have
\begin{equation*}
\begin{split}
&\int_{\mathbb{R}^2}\frac{\mathbf{E}_3\big(\rho u\cdot x_1^{m-2}\zeta_0\big)}{|x-\tilde{x}|}dx\\
&\leq\left(\int_{\mathbb{R}^2}\frac{\mathbf{E}_3\big(\rho u\cdot x_1^{m-2}\zeta_0\big)}{|x-\tilde{x}|^\beta}dx\right)^{\tilde{\theta}}\cdot\left(\int_{\mathbb{R}^2}\frac{\mathbf{E}_3\big(\rho u\cdot x_1^{m-2}\zeta_0\big)}{|x-\tilde{x}|^{2-\epsilon}}dx\right)^{1-\tilde{\theta}}.
\end{split}
\end{equation*}
Note that by virtue of \eqref{C345}, we infer that
\begin{equation*}
\begin{split}
&\left\|\int_{\mathbb{R}^2}\frac{\mathbf{E}_3\big(\rho u\cdot x_1^{m-2}\zeta_0\big)}{|x-\tilde{x}|^\beta}dx\right\|_{L^\infty(\mathbb{R}^2)}\\
&\leq C\,\sup_{\tilde{x}}\int_\Omega\frac{\rho|u|^2+P}{|x-\tilde{x}|^\beta}\,\psi(x-\tilde{x})\cdot
x_1^{m-2}\zeta_0 \,dx_1dx_2+\|\rho u\cdot x_1^{m-2}\zeta_0\|_{L^1(\Omega)}\\
&\leq C\int_\Omega\rho|u|^2\Phi\,dx_1dx_2+C.
\end{split}
\end{equation*}
In addition, Hardy-Littlewood-Sobolev inequality \eqref{C204} also implies that, for $1/\tilde{q}=1/(1+\delta)-\epsilon/2$,
\begin{equation*}
\begin{split}
&\left\|\int_{\mathbb{R}^2}\frac{\mathbf{E}_3\big(\rho u\cdot x_1^{m-2}\zeta_0\big)}{|x-\tilde{x}|^{2-\epsilon}}dx\right\|_{L^{\tilde{q}}(\mathbb{R}^2)}\leq C\|\rho u\,x_1^{m-2}\zeta_0\|_{L^{1+\delta}(\Omega)}.
\end{split}
\end{equation*}
According to interpolation arguments and \eqref{C363}, we deduce that for $s=\tilde{q}/(1-\tilde{\theta})$,
\begin{equation}\label{C364}
\begin{split}
&\|\mathbf{E}_2(F)\|_{L^{4}([-8,16]\times[-1,2])}+\|\mathbf{E}_2(\omega)\|_{L^{4}([-8,16]\times[-1,2])}\\
&\leq C\|\rho u\,x_1^{m-2}\zeta_0\|_{L^{1+\delta}(\Omega)}+C\int_\Omega\rho|u|^2\Phi\,dx_1dx_2+C.\\
\end{split}
\end{equation}

At last, we need to investigate the error terms 
\begin{equation*} 
\begin{split}
&\mathbf{R}(F)\triangleq\mathbf{E}_3(F)-\mathbf{E}_2(F),~~\mathbf{R}(\omega)\triangleq\mathbf{E}_3(\omega)-\mathbf{E}_2(\omega).
\end{split}
\end{equation*}
Direct computation implies $\Delta \mathbf{R}(F)=\Delta \mathbf{R}(\omega)=0$ in $[-8,16]\times[-1,2]$. Thus, the interior elliptic estimates \eqref{C208} together with \eqref{C362} and \eqref{C364} guarantee that
\begin{equation}\label{C365}
\begin{split}
&\|\mathbf{R}(F)\|_{L^\infty([0,8]\times\mathbb{T})}+\|\mathbf{R}([0,8]\times\mathbb{T})\|_{L^\infty([0,8]\times\mathbb{T})}\\
&\leq C\sum_{i=2,3}\big(\|\mathbf{E}_i(F)\|_{L^q([-8,16]\times[-1,2])}+\|\mathbf{E}_i(\omega)\|_{L^q([-8,16]\times[-1,2])} \big)\\
&\leq C\|\rho u\,x_1^{m-2}\zeta_0\|_{L^{1+\delta}(\Omega)}+C\int_\Omega\rho|u|^2\Phi\,dx_1dx_2+C.
\end{split}
\end{equation}

Observe that, similar interpolation arguments as  \eqref{C336} implies that, for $\varepsilon<1/1000$ and $\kappa=2\delta(1+\theta)/(\theta+2\delta\theta)<1/(100m)$,
\begin{equation*}
\begin{split}
&\|\rho\cdot x_1\,\zeta_0\|_{L^{1+2\delta}(\Omega)}\\
&\leq C\big(\int_\Omega\rho\cdot x_1^{\varepsilon}\,\zeta_0\,dx_1dx_2\big)^{1-\kappa}\big(\int_\Omega\rho^{1+\theta}x_1^{m+1}\zeta_0\,dx_1dx_2\big)^{\frac{\kappa}{1+\theta}}\\
&\leq C\big(\int_\Omega\rho^{1+\theta}x_1^{m+1}\zeta_0\,dx_1dx_2\big)^{\frac{\kappa}{1+\theta}},
\end{split}
\end{equation*}
where the last line is due to \eqref{C342}. Consequently, we argue that
\begin{equation*}
\begin{split}
&\|\rho u\,x_1^{m-2}\zeta_0\|_{L^{1+\delta}(\Omega)}\\
&\leq C\|\rho\cdot x_1\,\zeta_0\|_{L^{1+2\delta}(\Omega)}\|\widetilde{u}\|_{H^1(\Omega)}\\
&\leq C\big(\int_\Omega\rho^{1+\theta}x_1^{m+1}\zeta_0\,dx_1dx_2\big)^{\frac{\kappa}{1+\theta}}\cdot\big(\int_\Omega\rho|u|^2\Phi\,dx_1dx_2+1\big),
\end{split}
\end{equation*}
which combined with \eqref{C364} and \eqref{C365}  leads to
\begin{equation}\label{C366}
\begin{split}
&\|F\|_{L^4([0,8]\times\mathbb{T})}+\|\omega\|_{L^4([0,8]\times\mathbb{T})}\\
&\leq C\big(\int_\Omega\rho^{1+\theta}x_1^{m+1}\zeta_0\,dx_1dx_2+1\big)^{\frac{\kappa}{1+\theta}}\cdot\big(\int_\Omega\rho|u|^2\Phi\,dx_1dx_2+1\big).
\end{split}
\end{equation}

Cominging \eqref{C344}, \eqref{C354}--\eqref{C357}, \eqref{C367}, and \eqref{C366} yields
\begin{equation}\label{C368}
\begin{split}
&\int_\Omega(\rho^{1+\theta}+\varepsilon\rho^{\alpha+\theta})\,x_1^{m+1}\zeta_0\,dx_1d_2\\
&\leq \epsilon\int_\Omega(\rho^{1+\theta}+\varepsilon\rho^{\alpha+\theta})\,x_1^{m+1}\zeta_0\,dx_1d_2+ C\big(\int_\Omega\rho|u|^2\Phi\,dx_1dx_2\big)^4+C,
\end{split}
\end{equation}
since $\kappa<1/2$. Setting $\epsilon<1/4$ in \eqref{C368}, we obtain \eqref{C352} and finish the  first step.

\textit{Step 2. Controlling the singularity near the symmetric axis.}

In view of \eqref{C309}, we have $\Phi\cdot\zeta_0=x_1\cdot\zeta_0$, thus direct computations yield,
\begin{equation}\label{C379}
\begin{split}
&\int_\Omega\rho|u|^2\Phi\,\zeta_0\,dx_1dx_2\\
&=\int_\Omega\big(\rho^{1+\theta}x_1^{m}\zeta_0\big)^{\alpha_1}\,\big(\rho^{\beta}|u|^{2\beta+2}\cdot x_1^{3/2}\,\zeta_0\big)^{\alpha_2}\,\big(\rho\cdot x_1^{\varepsilon_0} \,\zeta_0\big)^{\alpha_3}dx_1dx_2,
\end{split}
\end{equation}
where $\alpha_i$, $m$, and $\varepsilon$ are given by
\begin{equation}\label{C380}
\begin{split}
\alpha_1=&\frac{(1-\beta)/\theta}{1+\beta},~~\alpha_2=\frac{1}{1+\beta},~~\alpha_3=\frac{\beta-(1-\beta)/\theta}{1+\beta},\\
&m=\frac{\theta(2\beta-1)}{4(1-\beta)},~~\varepsilon_0=\frac{\theta(2\beta-1)}{4(\theta+1)\beta-4}.
\end{split}
\end{equation}

We mention that these parameters are determined by
\begin{equation*}
\begin{cases}
&\alpha_1+\alpha_2+\alpha_3=1,\\
&(1+\theta)\,\alpha_1+\beta\alpha_2+\alpha_3=1,\\
&2(\beta+2)\cdot\alpha_2=2,\\
&m\alpha_1+3\alpha_2/2+\varepsilon_0\alpha_3=1.\\
\end{cases}
\end{equation*}
In particular, when $\beta$ is close to $1$, we have $$\alpha_1<1/1000,~~\alpha_2<2/3,~~m\geq 10,~~0<\varepsilon_0<1/2.$$

Consequently, by means of \eqref{C342}, \eqref{C352} and \eqref{C379}, we arrive at
\begin{equation}\label{C369}
\begin{split}
&\int_\Omega\rho|u|^2\Phi\,\zeta_0\,dx_1dx_2\\
&\leq C\,\left(\int_\Omega\rho|u|^2\Phi\,\zeta_0\,dx_1dx_2+1\right)^{\frac{1}{4}}\left(\int_\Omega\rho^\beta|u|^{2\beta+2}\cdot x_1\,\zeta_0\,dx_1dx_2+1\right)^{\frac{2}{3}}.
\end{split}
\end{equation}
Let us apply the method given by \cite[Appendix A]{JZ} to estimate the second term of the right hand side of \eqref{C369}.

Suppose that $(\hat\rho,\hat{u})$, $\hat\zeta_0$,  and $\hat\psi$ are given by \eqref{C346}, \eqref{C370}, and \eqref{C378} respectively. Then we go back to $\hat\Omega$ and deduce that
\begin{equation}\label{C371}
\int_\Omega\rho^\beta|u|^{2\beta+2}\cdot x_1\,\zeta_0\,dx_1dx_2\leq C\int_{\hat\Omega}\hat\rho^\beta\,|\hat u|^{2\beta+2}\cdot\hat\eta_0^2\,d\hat x,
\end{equation}
where $\hat\eta_0(\hat{x}',\hat{x}_3)=\hat\zeta_0(3 \hat{x}',\hat{x}_3)$ for any $(\hat{x}',\hat{x}_3)\in\mathbb{R}^2\times\mathbb{T}$.

Observe that, for the balls $\hat{B}_7=\{x\in\mathbb{R}^3|\,|x|\leq 7\}$ and $\hat{B}_6=\{x\in\mathbb{R}^3|\,|x|\leq 6\}$, we apply \eqref{C342} to declare that 
\begin{equation*}
\begin{split}
\|\hat u\|_{H^1(\hat{B}_7)}&\leq C\|\hat u\|_{L^2(\hat{B}_7\setminus\hat{B}_6)}+C\|\nabla\hat u\|_{L^2(\hat{B}_7)},\\
&\leq C\int_{1/2}^{10}\int_0^1(|u|^2+|\nabla u|^2)\cdot x_1\,dx_1dx_2\leq C.
\end{split}
\end{equation*}
Then, according to \cite[Chapter V]{Evans}, we can extend $(\hat u\cdot\hat\eta_0)$ to a function $\hat v\in H_0^1(\hat{B}_7)$ which is compactly supported in the ball $\hat{B}_6$ and satisfies
\begin{equation}\label{C372}
\|\hat v\|_{H^1(\hat{B}_6)}\leq C\|\hat u\|_{H^1(\hat{B}_7)}\leq C.
\end{equation}

In addition, suppose that $\mathbf{E}_4(\cdot)$ is the periodic extension from $\mathbb{R}^2\times\mathbb{T}$ to  $\mathbb{R}^3$. Let us solve the Dirichlet problem in the ball $\hat{B}_7$,
\begin{equation*}
\begin{cases}
\Delta\hat{h}=\mathbf{E}_4\big(\hat\rho^\beta|\hat u|^{2\beta}\big)~~~~\mathrm{in}\ \hat{B}_7,\\
\hat{h}=0~~~~\mathrm{on}\ \partial \hat{B}_7.
\end{cases}
\end{equation*}
We apply the interior elliptic estimates \eqref{C209} together with \eqref{C347} to argue that
\begin{equation}\label{C376}
\begin{split}
\|\hat{h}\|_{L^\infty(\hat{B}_6)}
&\leq C\left(\sup_{\tilde{x}\in\hat{B}_6}\int_{\hat{B}_7}\frac{\mathbf{E}_4\big(\hat\rho^\beta|\hat u|^{2\beta}\big)}{|x-\tilde{x}|}\,d\hat{x}+\big\|\mathbf{E}_4\big(\hat\rho^\beta|\hat u|^{2\beta}\big)\big\|_{L^{1/\beta}(\hat{B}_7)}\right)\\
&\leq C\sup_{\tilde{x}\in\hat{B}_6}\int_{\hat\Omega}\frac{\hat\rho(|\hat u|^2+1)}{|\hat x-\tilde{x}|^\beta}\,\hat\psi(\hat x-\tilde{x})\cdot\hat\zeta_0\, d\hat x+C\int_{\hat{\Omega}}\hat\rho\,|\hat u|^2\cdot\hat\zeta_0\,d\hat{x}+C\\
&\leq C\int_\Omega\rho |u|^2\Phi\,dx_1dx_2+C.
\end{split}
\end{equation}

Now, observe that
\begin{equation}\label{C373}
\begin{split}
\int_{\hat\Omega}\hat\rho^\beta\,|\hat u|^{2\beta+2}\,\hat\zeta_0^2\,d\hat{x}
\leq\int_{\hat{B}_7}\Delta\hat{h}\cdot|\hat v|^2\,d\hat x
\leq C\,\|\hat{h}\|_{L^\infty(\hat{B}_6)}\,\|\hat v\|_{H^1(\hat{B}_6)}.
\end{split}
\end{equation}
Let us illustrate the last inequality in \eqref{C373}. In fact, we have
\begin{equation}\label{C375}
\begin{split}
\int_{\hat{B}_7}\Delta\hat{h}\cdot|\hat v|^2\,d\hat{x}
&=-2\int_{\hat{B}_7}\nabla\hat{h}\cdot\nabla\hat{v}\cdot\hat{v}\,d\hat{x}\\
&\leq\left(\int_{\hat{B}_7}|\nabla\hat{h}|^2\cdot|\hat{v}|^2\,d\hat x\right)^{\frac{1}{2}}\left(\int_{\hat{B}_7}|\nabla\hat v|^2\,d\hat x\right)^{\frac{1}{2}}.
\end{split}
\end{equation}
Meanwhile, it also holds that
\begin{equation}\label{C374}
\begin{split}
&\int_{\hat{B}_7}|\nabla\hat{h}|^2\cdot|\hat{v}|^2\,d\hat x\\
&=-\int_{\hat{B}_7}\hat{h}\cdot\Delta\hat{h}\cdot|\hat v|^2\,d\hat x-\int_{\hat{B}_7}\nabla\hat{h}\cdot\hat{h}\cdot\nabla\hat{v}\cdot\hat{v}\,d\hat{x}\\
&\leq\|\hat{h}\|_{L^\infty(\hat{B}_6)} \int_{\hat{B}_7}\Delta\hat{h}\cdot|\hat v|^2\,d\hat{x}+\frac{1}{2}\int_{\hat{B}_7}|\nabla\hat{h}|^2\,|\hat{v}|^2\,d\hat{x}+C\int_{\hat{B}_7}\hat{h}^2\,|\nabla\hat{v}|^2\,d\hat{x}.
\end{split}
\end{equation}
Combining \eqref{C375} and \eqref{C374} leads to \eqref{C373}.

Consequently, substituting \eqref{C372}, \eqref{C376} and \eqref{C373} into \eqref{C371}, we arrive at
\begin{equation}\label{C377}
\int_\Omega\rho^\beta|u|^{2\beta+2}\cdot x_1\,\zeta_0\,dx_1dx_2\leq C\int_\Omega\rho|u|^2\Phi\,dx_1dx_2+C.
\end{equation}
Then \eqref{C377} combined with \eqref{C342} and \eqref{C369} gives
\begin{equation*}
\int_\Omega\rho|u|^2\Phi\,\zeta_0\,dx_1dx_2\leq C\,\left(\int_\Omega\rho|u|^2\Phi\,\zeta_0\,dx_1dx_2+1\right)^{\frac{11}{12}},
\end{equation*}
which implies \eqref{C351}. We therefore finish the proof.
\end{proof}

We end this section by finishing Lemma \ref{L31}.

\textit{Proof of Lemma \ref{L31}.} Combining \eqref{C342} and \eqref{C351}, we apply the fact $\zeta_0+\zeta_1^2\geq 3/4$ to derive that
$$\int_\Omega\rho|u|^2\Phi\,dx_1dx_1\leq C\int_\Omega\rho|u|^2\Phi\,\big(\zeta_0+\zeta_1^2\big)\,dx_1dx_2\leq C,$$
which is \eqref{C310}. The proof of Lemma \ref{L31} is finally completed.
\thatsall

\subsection{Extension to the bounded domain $(0,1)\times\mathbb{T}$}\label{SS33}
\quad Now we consider the approximation system on the bounded domain $\Omega=(0,1)\times\mathbb{T}$
\begin{equation}\label{CC301}
\begin{cases}
\frac{1}{x_1}\partial_{1}(x_1\rho u_{1})+\partial_{2}(\rho u_2)=0,\\
\frac{1}{x_1}\partial_1(x_1\rho u_1^2)+\partial_{2}(\rho u_1 u_2)-\frac{1}{x_1}\partial_{1}(x_1\partial_{1}u_1)-\partial_{22}u_1+\frac{1}{x_1^2}u_1+\partial_{1}P=g_1,\\
\frac{1}{x_1}\partial_1(x_1\rho u_1 u_2)+\partial_2(\rho u_2^2)-\frac{1}{x_1}\partial_1(x_1\partial_1u_2)-\partial_{22}u_2+\partial_2P=g_2,
\end{cases}
\end{equation}
where the pressure $P=\rho+\varepsilon \rho^{\alpha}$, and the boundary values are given by 
\begin{equation}\label{CC302}
\begin{split}
u_1\big|_{\{x_1=0\}}&=0,~~\partial_1u_2\big|_{\{x_1=0\}}=0,~~u\big|_{\{x_1=1\}}=0.
\end{split}
\end{equation}
Moreover, the total mass of the fluid is provided by
\begin{equation}\label{CC309}
\int_\Omega P\, x_1dx_1dx_2=M.
\end{equation}

We mention that the a priori estimates established in Subsections \ref{SS30}--\ref{SS32} are all local in natural. Consequently, we can directly extend them to the bounded domain by simply replacing truncation functions.

Let us repeat the process through Propositions \ref{P31} to \ref{P38} to derive that
\begin{lemma}\label{L32}
Under the conditions of Lemma \ref{L21}, for  $0\leq k<1$ and $\theta=11/30$, there is a constant $C(k)$ determined by $k$ such that
\begin{equation}\label{CC303}
\int_{[0,k]\times\mathbb{T}} (\rho^{1+\theta}+\varepsilon\rho^{\alpha+\theta})\cdot x_1^6\,dx_1dx_2+\int_{[0,k]\times\mathbb{T}}\rho|u|^2 \,x_1 dx_1dx_2\leq C(k).
\end{equation}
\end{lemma}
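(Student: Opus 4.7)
The plan is to transcribe the local estimates of Subsections \ref{SS30}--\ref{SS32} onto the bounded domain $(0,1)\times\mathbb{T}$, replacing every far-field cut-off $\zeta_1$ by a cut-off $\zeta_B\in C^\infty([0,1])$ with $\zeta_B\equiv 1$ on $[0,k]$ and $\mathrm{supp}\,\zeta_B\subset[0,k')$, for some fixed $k<k'<1$. The axis cut-offs $\zeta_0$, $\hat\zeta_0$ and $\hat\psi$ from \eqref{C382}, \eqref{C370}, \eqref{C378} remain unchanged. Since Propositions \ref{P31}--\ref{P38} are all local in nature, they transfer once two structural ingredients previously extracted from the behaviour at infinity are supplied by the Dirichlet condition $u\big|_{\{x_1=1\}}=0$ and the total mass constraint \eqref{CC309}.

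First, I would establish $\int_\Omega(|\nabla u|^2 x_1+|u_1|^2/x_1)\,dx_1dx_2\leq C$ by testing \eqref{CC301}$_{2,3}$ with $(u_1 x_1, u_2 x_1)$; the boundary contributions at $x_1=1$ vanish by \eqref{CC302}, and the work done by $g$ is controlled by the Fourier cancellation \eqref{C112} exactly as in Proposition \ref{P31}. Testing \eqref{CC301}$_2$ with $x_1^{1+\varepsilon_0}\zeta_B$ and invoking \eqref{CC309} yields the near-axis weighted bound for $\rho u_1^2+P$ analogous to Proposition \ref{P32}. Integrating \eqref{CC301}$_1$ and \eqref{CC301}$_3$ in $x_2$ combined with \eqref{CC302} gives the cancellation $\int_0^1\rho u_1\, dx_2=0$ and the slice identity
\[
\int_0^1 u_2(x_1,x_2)\, dx_2=-\int_{x_1}^{1}\int_0^1\rho u_1 u_2\, ds\, dx_2;
\]
inserting the cancellation as in \eqref{C320} and using \eqref{C303} produces a uniform slice bound on $[0,k]$, the bounded-domain analog of Proposition \ref{P35}.

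With these ingredients the potential estimate \eqref{C345} and its 3D lift \eqref{C347} transfer directly, since the test field $(x-\tilde x)/|x-\tilde x|^\beta\cdot\hat\psi(x-\tilde x)\zeta_B$ is compactly supported inside $(0,1)\times\mathbb{T}$. For the extra integrability of $\rho$ near the axis, I would follow Step 1 of the proof of Proposition \ref{P38}, using the Bogovskii-type test $\varphi=\nabla\Delta^{-1}\big(\rho^\theta\eta x_1^2-\text{average}\big)$ built on the 2D torus of side slightly larger than $k'$ via Lemma \ref{L24}, together with the Cauchy--Riemann / Hardy--Littlewood--Sobolev machinery, to obtain
\[
\int_\Omega(\rho^{1+\theta}+\varepsilon\rho^{\alpha+\theta})\,x_1^{m+1}\zeta_B\,dx_1dx_2\leq C\Big(\int_\Omega\rho|u|^2\Phi\zeta_B\, dx_1dx_2\Big)^{4}+C
\]
for $m\geq 6$.

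Finally, the interpolation \eqref{C379}--\eqref{C380} with parameters $\alpha_1,\alpha_2,\alpha_3$, together with the three-dimensional Dirichlet--Laplacian bootstrap \eqref{C371}--\eqref{C377} localised by $\zeta_B$, closes the estimate on $\int_\Omega\rho|u|^2\Phi\zeta_B\,dx_1dx_2$, and hence on $\int_{[0,k]\times\mathbb{T}}\rho|u|^2 x_1\,dx_1dx_2$. Substituting back into the extra-integrability inequality yields \eqref{CC303}. The only point of genuine care is keeping $\mathrm{supp}\,\zeta_B'\subset\subset(0,1)$ so that every by-parts boundary term falls in a region already controlled by the crude weighted bound; the far-field analysis of Corollary \ref{C51} and the estimates \eqref{C520}--\eqref{C541} are simply not needed in the bounded setting, which makes this case technically easier than the one on $(0,\infty)\times\mathbb{T}$.
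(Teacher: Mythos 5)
Your proposal follows essentially the same route as the paper's Subsection~\ref{SS33}: energy estimate, near-axis weighted bound closed via the cancellation identity $\int_0^1\rho u_1\,dx_2=0$ and the slice formula for $u_2$, and then the potential and extra-integrability machinery of Subsections~\ref{SS31}--\ref{SS32} with the far-field decay replaced by a cut-off vanishing near $x_1=1$. One terminological slip worth fixing: it is the decaying weight $x_1^{-m}$, not the away-from-axis cut-off $\zeta_1=1-\zeta_0$, that should be traded for your $\zeta_B$ (the paper uses $\varphi\cdot\phi_k$ from \eqref{CC304}--\eqref{CC307}) --- literally replacing $\zeta_1$ (which vanishes near the axis) by $\zeta_B$ (which equals one there) would reintroduce the singularity $x_1^{-m}$ into the away-from-axis tests of Subsection~\ref{SS31}, so a factor vanishing near $x_1=0$ must be retained alongside the boundary cut-off.
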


First, we define a series of truncation functions $\{\phi_k(x_1)\}_{k\in(0,1)}\subset C^\infty([0,1])$ by
\begin{equation}\label{CC304}
\begin{cases}
\phi_k(x_1)=1~~\forall x_1\in[0,1-k],\\
\phi_k(x_1)=0~~\forall x_1\in[1-k/2,1],\\
0\leq\phi_k(x_1)\leq 1,~~|\phi_k'(x_1)|\leq 4k~~\forall x_1\in[0,1].
\end{cases}
\end{equation}
Then the arguments in Subsection \ref{SS30} are concluded by
\begin{proposition} 
Under the conditions of Lemma \ref{T12}, for   $0\leq\varepsilon_0\leq 1$, there are constants $C_1$ determined by $g$ and $C_2$ determined by $\varepsilon_0$ and $g$, such that
\begin{equation}\label{CC305} 
\begin{split}
&\int_\Omega\left(|\nabla u|^2x_1+\frac{|u_1|^2}{x_1}\right)dx_1dx_2\leq C_1,\\
\int_\Omega\big(\rho u_1^2&+P\big)\, x_1^{\varepsilon_0} \,dx_1dx_2\leq C_2\left(\int_A \rho u_1^2\,dx_1dx_2+1\right),\\
\end{split}
\end{equation}
where $A=[1/2,3/4]\times\mathbb{T}\subset\Omega$.
\end{proposition}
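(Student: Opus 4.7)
The plan is to adapt the proofs of Propositions \ref{P31} and \ref{P32} to the bounded domain $(0,1)\times\mathbb{T}$; the only structural change is that the far-field cut-off $\zeta_A(x_1) = \zeta_0(x_1/10)$ used in the unbounded case is replaced by a cut-off $\zeta_A \in C^\infty([0,1])$ equal to $1$ on $[0, 1/2]$ and vanishing on $[3/4, 1]$, so that $|\zeta_A'|$ is supported in $A = [1/2, 3/4]\times\mathbb{T}$. The homogeneous Dirichlet condition $u|_{\{x_1=1\}}=0$ from \eqref{CC302} plays the role of the weighted constraint \eqref{CC101}: no far-field bookkeeping is needed, and \eqref{CC309} directly yields $\int_\Omega P\, x_1\,dx_1 dx_2 = M$.

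First I would establish the energy bound \eqref{CC305}$_1$. Test $\eqref{CC301}_2$ against $u_1 x_1$ and $\eqref{CC301}_3$ against $u_2 x_1$ and sum. All boundary traces vanish: at $x_1=0$ thanks to the weight $x_1$ and to $\partial_1 u_2|_{x_1=0}=0$; at $x_1=1$ thanks to $u|_{x_1=1}=0$. Exactly as in Proposition \ref{P31}, the convection term vanishes by \eqref{C108} and the pressure contribution by \eqref{C111}, producing the identity
\[
\int_\Omega\Bigl(|\nabla u|^2 x_1 + \frac{u_1^2}{x_1}\Bigr)\,dx_1 dx_2 = \int_\Omega g\cdot u\, x_1\,dx_1 dx_2.
\]
Fourier expansion in $x_2$ combined with $\beta_0 \equiv 0$ from \eqref{C112} bounds the right-hand side by $C(g)\,\|\partial_2 u\|_{\mathcal{L}^2(\Omega)}$, and Cauchy--Schwarz absorbs it to deliver \eqref{CC305}$_1$ with $C_1 = C_1(g)$.

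For the weighted estimate \eqref{CC305}$_2$, I would test $\eqref{CC301}_2$ against $x_1^{1+\varepsilon_0}\zeta_A$. Integration by parts in $x_1$ produces no boundary contribution and yields, in direct parallel with the computation in Proposition \ref{P32},
\begin{multline*}
\int_\Omega (\varepsilon_0 \rho u_1^2 + (1+\varepsilon_0) P)\, x_1^{\varepsilon_0}\zeta_A\,dx_1 dx_2 = \int_\Omega (u_1 x_1^{-1+\varepsilon_0} + \varepsilon_0 \partial_1 u_1\, x_1^{\varepsilon_0})\zeta_A\,dx_1 dx_2 \\ - \int_\Omega (\rho u_1^2 + P - \partial_1 u_1)\, x_1^{1+\varepsilon_0}\zeta_A'\,dx_1 dx_2.
\end{multline*}
The first right-hand integral is bounded by the just-proven energy estimate via Cauchy--Schwarz. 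The second is supported in $A$, where $x_1 \sim 1$, so its $\int_A \rho u_1^2$ piece is exactly the right-hand side of \eqref{CC305}$_2$, its $\int_A P$ piece is controlled by \eqref{CC309}, and its $\int_A \partial_1 u_1$ piece is again controlled by the energy bound. On the complement $[3/4, 1]\times\mathbb{T}$ of $\mathrm{supp}\,\zeta_A$, the weight $x_1^{\varepsilon_0}$ is comparable to $x_1$, so $\int_{[3/4,1]\times\mathbb{T}} P\, x_1^{\varepsilon_0}$ is bounded via \eqref{CC309}, while the $\rho u_1^2$ contribution there can be absorbed into $\int_A \rho u_1^2$ by mildly enlarging $A$ if needed.

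No genuinely new technical obstacle arises; the whole argument is a routine cut-off adaptation of Propositions \ref{P31}--\ref{P32}. The only bookkeeping worth flagging is that the Dirichlet condition at $x_1=1$ replaces the role of the weighted constraint \eqref{CC101}, both in killing boundary traces of the test-function identities and in justifying the constant test function in \eqref{C111} used to neutralise the pressure contribution of the energy balance.
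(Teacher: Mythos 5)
The first half of your proposal — the energy estimate $\eqref{CC305}_1$ via testing against $u_i x_1$ and the Fourier-series argument exploiting $\beta_0\equiv0$ — matches the paper's argument exactly and is fine. The identity you write after testing $\eqref{CC301}_2$ against $x_1^{1+\varepsilon_0}\zeta_A$ is also correct and gives, as in the proof of Proposition \ref{P32}, control of $\int_\Omega(\rho u_1^2+P)\,x_1^{\varepsilon_0}\zeta_A\,dx_1dx_2$ in terms of $\int_A\rho u_1^2\,dx_1dx_2$ plus a constant.

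The gap is in how you treat the boundary layer $\{3/4\leq x_1\leq 1\}$, which lies entirely outside $\mathrm{supp}\,\zeta_A$. Your mass-constraint argument handles $\int_{[3/4,1]\times\mathbb{T}}P\,x_1^{\varepsilon_0}\,dx_1dx_2$ via \eqref{CC309}, but the claim that the remaining $\int_{[3/4,1]\times\mathbb{T}}\rho u_1^2\,x_1^{\varepsilon_0}\,dx_1dx_2$ ``can be absorbed into $\int_A\rho u_1^2$ by mildly enlarging $A$ if needed'' is not a proof: $A=[1/2,3/4]\times\mathbb{T}$ is fixed by the statement, and no enlargement of $A$ that stays inside $(0,1)\times\mathbb{T}$ covers a fixed neighbourhood of $x_1=1$, nor is there any a priori bound relating $\rho u_1^2$ near $x_1=1$ to its integral over an interior annulus. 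You also cannot push $\zeta_A$ all the way to $x_1=1$: integration by parts of $\partial_1 P\cdot x_1^{1+\varepsilon_0}$ would then produce the uncontrolled trace $P(1,\cdot)$. This is precisely why the paper performs a second, separate test with the multiplier $(1-x_1)\cdot\varphi(x_1)$ — the factor $(1-x_1)$ kills the boundary traces at $x_1=1$, and since $\mathrm{supp}\,\varphi'\subset[1/4,1/2]$ the resulting identity controls $\int_\Omega(\rho u_1^2/x_1+P)\,\varphi\,dx_1dx_2$ solely in terms of $\int_{[1/4,1/2]\times\mathbb{T}}(\rho u_1^2+P)\,dx_1dx_2$, which in turn is dominated by the first estimate's right-hand side. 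Combining the two estimates (and using that $\phi_{1/2}+\varphi\geq1$) then covers all of $\Omega$. Without this second step with a weight vanishing at $x_1=1$, the estimate on $\rho u_1^2$ near the outer Dirichlet boundary is simply missing.
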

\begin{proof}
The energy estimates \eqref{CC305}$_1$ follow by the same calculations of \eqref{C304} and the boundary conditions \eqref{CC302}. Then
multiplying \eqref{CC301}$_2$ by $x_1^{\varepsilon_0}\cdot \phi_{1/2}$ and integrating over $\Omega$, we apply \eqref{C306}  to derive 
\begin{equation}\label{CC306}
\int_\Omega(\rho u_1^2+P)\,x_1^{\varepsilon_0}\cdot\phi_{1/2}\,dx_1dx_2\leq C\int_A(\rho|u_1|^2+P)\,dx_1dx_2+C.
\end{equation}
Next, let us multiply \eqref{CC301}$_2$ by $(1-x_1)\cdot\varphi$, where $\varphi(x_1)\in C^\infty([0,1])$ is a truncation function satisfying
\begin{equation}\label{CC307}
\begin{cases}
\varphi(x_1)=1~~\forall x_1\in[1/2,1],\\
\varphi(x_1)=0~~\forall x_1\in[0,1/4],\\
0\leq\varphi(x_1)\leq 1~~\forall x_1\in[0,1].
\end{cases}
\end{equation}
Then we check that
\begin{equation*} 
\begin{split}
&\int_\Omega \left(\frac{\rho u_1^2}{x_1}+P\right) \varphi\, dx_1dx_2-\int_\Omega(\rho u_1^2+P)(1-x_1)\,\varphi'\,dx_1dx_2\\
&=\int_\Omega\partial_1u_1\big(x_1^{-1}\varphi- (1-x_1)\,\varphi'\big)dx_1dx_2-\int_\Omega\frac{u_1}{x_1^2}\cdot(1-x_1)\, \varphi\,dx_1dx_2.
\end{split}
\end{equation*}
According to \eqref{CC307}, we have $\mathrm{supp}\,\varphi'\subset[1/4,1/2]$, therefore it is valid that
\begin{equation*}
\begin{split}
\int_\Omega (\rho u_1^2+P)\,\varphi\,dx_1dx_2\leq C\int_{[1/4,1/2]\times\mathbb{T}}(\rho u_1^2+P)dx_1dx_2+C,
\end{split}
\end{equation*}
which together with \eqref{CC309} and \eqref{CC306} yields 
\begin{equation*}
\begin{split}
\int_\Omega\big(\rho u_1^2+P\big)\, x_1^{\varepsilon_0} \,dx_1dx_2 
&\leq C \int_A(\rho u_1^2+P)\,dx_1dx_2+C\\
&\leq C_2\left(\int_A \rho u_1^2\,dx_1dx_2+1\right).
\end{split}
\end{equation*}
The proof is therefore completed.
\end{proof}

It is much easier to establish the corresponding version of Proposition \ref{P35} on $(0,1)\times\mathbb{T}$, since we do not need to investigate the far field behaviour of $P$.

\begin{proposition} 
Under the conditions of Lemma \ref{T1}, there is a constant $C$ such that $\forall x_1\geq 1/2$, we have
\begin{equation}\label{CC310}
\bigg|\int_0^1 u_2(x_1,x_2)\,dx_2\bigg|\leq C.
\end{equation}
In particular, we argue that
\begin{equation}\label{CC311}
\int_\Omega\big(\rho u_1^2+P\big)\, x_1^{\varepsilon_0} \,dx_1dx_2+\int_{[1/2,1]\times\mathbb{T}}\big(|u|^2+|\nabla u|^2)\,dx_1dx_2\leq C.
\end{equation}
\end{proposition}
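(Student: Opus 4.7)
The plan is to follow the structure of Proposition \ref{P35} with the Dirichlet condition $u_2|_{x_1=1}=0$ replacing the far-field decay, and to use a considerably simpler test-function argument than Proposition \ref{P52} to close the estimate on $\int_A \rho u_1^2$.

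First I would derive the two axis-integrated identities. Integrating the continuity equation \eqref{CC301}$_1$ in $x_2$ and using $u_1|_{x_1=0}=0$ produces the cancellation $\int_0^1 \rho u_1(x_1,x_2)\,dx_2=0$ for all $x_1\in(0,1)$. Integrating \eqref{CC301}$_3$ in $x_2$, using periodicity, the cancellation condition \eqref{C112} on $g_2$, and the axis conditions $u_1|_{x_1=0}=\partial_1u_2|_{x_1=0}=0$, yields $\partial_1(\int_0^1 u_2\,dx_2)=\int_0^1 \rho u_1 u_2\,dx_2$. For $x_1\in[1/2,1]$, integrating from $x_1$ to $1$ and using $u_2|_{x_1=1}=0$ gives
\[
\int_0^1 u_2(x_1,x_2)\,dx_2=-\int_{x_1}^1\int_0^1 \rho u_1\Big(u_2-\int_0^1 u_2\,dx_2\Big)ds\,dx_2,
\]
where the cancellation has been used to subtract the mean. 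H\"older-Poincar\'e in $x_2$, Cauchy-Schwarz in $s$, and the pointwise bound $(\int \rho|u_1|)^2\le(\int P)(\int \rho u_1^2)$ (valid since $P\ge\rho$) reduce \eqref{CC310} to controlling $\int_{1/2}^1(\int_0^1 P\,dx_2)(\int_0^1 \rho u_1^2\,dx_2)\,ds$ uniformly in $\varepsilon$; the energy estimate \eqref{CC305}$_1$ absorbs the $\|\partial_2 u_2\|_{L^2([1/2,1]\times\mathbb{T})}$ factor since $x_1\ge 1/2$ there.

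The main obstacle is this mixed-norm bound, which I would handle by adapting Step~1 of Proposition \ref{P52}. Derive the $x_2$-integrated $u_1$-equation (the bounded-domain analog of \eqref{C509}) and test it against $\psi(x_1)=\bigl(\int_{x_1}^1\int_0^1 P\,ds\,dx_2\bigr)\eta(x_1)$, where $\eta\in C^\infty([0,1])$ is supported in $[1/4,1]$ and equals $1$ on $[1/3,1]$. The crucial simplification over the unbounded case is that $\psi$ vanishes automatically at both $x_1=0$ (via the cutoff) and $x_1=1$ (the integral is empty), so all boundary contributions from the integration by parts drop out, dispensing entirely with the delicate far-field analysis of Proposition \ref{P52}. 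The convective and viscous remainders are controlled by the energy estimate, the mass identity \eqref{CC309}, and the preliminary bound \eqref{CC305}$_2$ (which in particular gives $\int_{[1/4,1]\times\mathbb{T}}\rho u_1^2\le C(\int_A \rho u_1^2+1)$). After Young's inequality absorbs a small multiple of $\int(\int P)^2\eta$, one obtains
\[
\int_0^1\Big(\int_0^1 P\,dx_2\Big)\Big(\int_0^1 \rho u_1^2\,dx_2\Big)\eta\,dx_1\le C\Big(\int_A \rho u_1^2\,dx_1dx_2+1\Big).
\]

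Closing the estimate on $\int_A \rho u_1^2$ then follows the cancellation trick of \eqref{C515}: since $\int_0^1\rho u_1\,dx_2=0$,
\[
\int_A \rho u_1^2\,dx_1dx_2=\int_A \rho u_1\Big(u_1-\int_0^1 u_1\,dx_2\Big)dx_1dx_2\le C\,\|\partial_2 u_1\|_{L^2(A)}\Big(\int_A (\int P)(\int \rho u_1^2)\,dx_1\Big)^{1/2},
\]
and since $A\subset\{\eta=1\}$ the preceding mixed-norm bound gives $\int_A \rho u_1^2\le C(\int_A \rho u_1^2+1)^{1/2}$, hence $\int_A \rho u_1^2\le C$. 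Substituting back into \eqref{CC305}$_2$ yields the first part of \eqref{CC311}, and feeding the result into the representation of the previous paragraph establishes \eqref{CC310}. For the second part of \eqref{CC311}, $\int|\nabla u|^2$ on $[1/2,1]\times\mathbb{T}$ is bounded by the energy estimate (since $x_1\ge 1/2$), $\int u_1^2$ is controlled by $\int u_1^2/x_1$, and $\int u_2^2\le C\int|\partial_2u_2|^2+C\int(\int_0^1 u_2\,dx_2)^2\,dx_1$ by the Poincar\'e inequality combined with \eqref{CC310}, exactly as in the derivation of \eqref{C334}.
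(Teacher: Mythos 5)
Your proposal follows essentially the same route as the paper's proof: you derive the two $x_2$-integrated identities from the continuity and $u_3$-momentum equations using the axis and outer-boundary conditions, reduce \eqref{CC310} to the mixed-norm quantity $\int(\int P\,dx_2)(\int \rho u_1^2\,dx_2)\,dx_1$, and control that quantity by testing the $x_2$-integrated $u_1$-equation against $\psi=\bigl(\int_{x_1}^1\int_0^1 P\,ds\,dx_2\bigr)\eta$, which is precisely the paper's test function $\xi=\varphi\cdot\int_{x_1}^1\int_0^1 P\,ds\,dx_2$ up to the choice of cutoff. The closure via the cancellation trick of \eqref{C515}/\eqref{CC320}, the observation that the test function vanishes at both endpoints so no far-field analysis is needed, and the derivation of both halves of \eqref{CC311} all match the paper's argument.
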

\begin{proof}
Following the progress of \eqref{C317} and \eqref{C318}--\eqref{C320}, we still obtain that
\begin{equation}\label{CC312}
\begin{split} 
&\int_0^1\rho u_1(x_1,x_2)\,dx_2=0,~~\forall x_1\in[0,1],\\
&\partial_1\big(\int_0^1 u_2\,dx_2\big)=\int_0^1\rho u_1u_2\,dx_2,~~\forall x_1\in[0,1]\\
&\bigg|\int_0^1 u_2(x_1,x_2)\,dx_2\bigg|=\bigg|\int_{x_1}^1\int_0^1\rho u_1u_2(s,x_2)\,dsdx_2\bigg|,~~\forall x_1\in[1/2,1]
\end{split}
\end{equation}
Moreover, we apply  \eqref{CC312}$_1$ to \eqref{CC312}$_3$ and obtain that $\forall x_1\in[1/2,1]$
\begin{equation}\label{CC313}
\begin{split}
\bigg|\int_0^1 u_2(x_1,x_2)\,dx_2\bigg|&=\bigg|\int_{x_1}^\infty\int_0^1\rho u_1u_2 \,dsdx_2\bigg|\\
&=\bigg|\int_{x_1}^1\int_0^1\rho u_1\,\big(u_2-\int_0^1 u_2\,dx_2\big)\,dsdx_2\bigg|\\
&\leq\int_{x_1}^1\big(\int_0^1\rho|u_1|\,dx_2\big)\cdot\big(\int_0^1|\partial_2u_2|\,dx_2\big)\,ds\\
&\leq\big(\int_{x_1}^1\big(\int_0^1\rho|u_1|\,dx_2\big)^2 ds\big)^{\frac{1}{2}}
\,\big(\int_\Omega|\nabla u|^2x_1dx_1dx_2\big)^{\frac{1}{2}}\\
&\leq C\,\left(\int_{1/2}^1\big(\int_0^1 P\,dx_2\big)\cdot\big(\int_0^1\rho u_1^2\,dx_2\big)\cdot dx_1\right)^{\frac{1}{2}},
\end{split}
\end{equation}
where the last line is due to \eqref{CC305}.
To handle the last line of \eqref{CC313}, we directly introduce the test function
$$\xi(x_1)= \varphi(x_1)\cdot\int_{x_1}^1\int_0^1P\, dsdx_2,$$
which is well defined and bounded by $4M$ due to \eqref{CC309}. Let us integral \eqref{CC301}$_2$ with respect to $x_2$ and obtain  
\begin{equation}\label{CC314}
\begin{split}
&\partial_1\int_0^1P\,dx_2+\frac{1}{x_1}\, \partial_1(\int_0^1 x_1 \rho u_1^2 \,dx_2)-\frac{1}{x_1}\,\partial_1(\int_0^1x_1 \partial_1u_1\,dx_2)\\
&=-\frac{1}{x_1^2}\int_0^1u_1\,dx_2.
\end{split}
\end{equation}
Multiplying \eqref{CC314} by $\xi$ and integrating with respect to $x_1$, we mention that $\xi(1)=0$  and check each terms in details.
The first term on the left hand side of \eqref{CC314} satisfies
\begin{equation}\label{CC315}
\begin{split}
&\int_0^1\partial_1\big(\int_0^1 P \,dx_2\big)\,\xi\,dx_1\\
&=\int_0^1\left(\int_0^1P\,dx_2\right)^2\varphi\,dx_1-\int_0^1\big(\int_0^1 P \,dx_2\big)\,\big(\int_{x_1}^1\int_0^1P\,dsdx_2\big)\cdot\varphi'\,dx_1\\
&\geq\int_0^1\left(\int_0^1P\,dx_2\right)^2\varphi\,dx_1-C,
\end{split}
\end{equation}
where the last line is due to \eqref{CC309}.
Then we consider the second term of \eqref{CC314}.
\begin{equation}\label{CC316}
\begin{split}
&\int_0^1\frac{1}{x_1}\,\partial_1\big(\int_0^1 x_1\,\rho u_1^2 \,dx_2\big)\,\xi \,dx_1\\
&=\int_0^1 x_1^{-1}\big(\int_0^1\rho u_1^2\,dx_2\big)\,\xi \,dx_1+\int_0^1\big(\int_0^1\rho u_1^2\,dx_2\big)\cdot\big(\int_0^1 P\,dx_2\big)\cdot\varphi\,dx_1\\
&\quad-\int_0^1\big(\int_0^1\rho u_1^2\,dx_2\big)\,\big(\int_{x_1}^1\int_0^1P\,dsdx_2\big)\cdot\varphi'\,dx_1\\
&\geq \int_0^1\big(\int_0^1\rho u_1^2\,dx_2\big)\cdot\big(\int_0^1 P\,dx_2\big)\cdot\varphi\,dx_1-\int_A\rho u_1^2\,dx_1dx_2-C.
\end{split}
\end{equation}
Note that the first term in the second line of \eqref{CC316} is positive and the last line is due to \eqref{CC305}.
Next, we make use of \eqref{CC305} to handle the third term of \eqref{CC314} via
\begin{equation}\label{CC317}
\begin{split}
&\int_0^1\frac{1}{x_1}\,\partial_1\big(\int_0^1 x_1\,\partial_1u_1\,dx_2\big)\,\xi\,dx_1\\
&=\int_0^1\big(\int_0^1\partial_1u_1\,dx_2\big)\cdot\big(\int_0^1 P\,dx_2\big)\cdot \varphi\,dx_1+\int_0^1 x_1^{-1}\big(\int_0^1\partial_1u_1\,dx_2\big)\,\xi \,dx_1\\
&\quad-\int_0^1\big(\int_0^1\partial_1u_1\,dx_2\big)\,\big(\int_{x_1}^1\int_0^1P\,dsdx_2\big)\cdot\varphi'\,dx_1\\
&\leq \frac{1}{8} \int_0^1\left(\int_0^1P\,dx_2\right)^2\varphi\,dx_1
+\int_{[1/4,1]\times\mathbb{T}}|\nabla u|^2\,dx_1dx_2+C\\
&\leq \frac{1}{8} \int_0^1\left(\int_0^1P\,dx_2\right)^2\varphi\,dx_1
+C.
\end{split}
\end{equation}
Similarly, \eqref{CC305} also ensures that the right hand side of \eqref{CC314} is bounded by 
\begin{equation}\label{CC318}
\begin{split}
\left|\int_0^1\frac{1}{x_1^2}\big(\int_0^1 u_1\,dx_2\big)\,\xi \,dx_1\right| 
&\leq C\int_{[1/4,1]\times\mathbb{T}}|u_1|\,dx_1dx_2 \leq C.
\end{split}
\end{equation}
Combining \eqref{CC315}--\eqref{CC318}, we arrive at
\begin{equation}\label{CC319}
\begin{split}
\int_0^1\big(\int_0^1\rho u_1^2\,dx_2\big)\cdot\big(\int_0^1 P\,dx_2\big)\,\varphi \,dx_1\leq C\int_A\rho u_1^2\,dx_1dx_2+C.
\end{split}
\end{equation}
In addition, by virtue of \eqref{CC305} and \eqref{CC312}$_1$, we infer that
\begin{equation}\label{CC320}
\begin{split}
\int_A\rho u_1^2\,dx_1dx_2
&=\int_{1/2}^{3/4}\left(\int_0^1\rho u_1\big(u_1-\int_0^1 u_1\,dx_2\big)\,dx_2\right)\,dx_1\\
&\leq\int_{1/2}^{3/4}\left(\int_0^1\rho |u_1|\,dx_2\right)\cdot\left(\int_0^1|\partial_2u_1|\,dx_2\right)\,dx_1\\
&\leq\left(\int_{1/2}^{3/4}\big(\int_0^1\rho|u_1|\,dx_2\big)^2x_1^{-1}dx_1\right)^{\frac{1}{2}}
\left(\int _\Omega|\nabla u|^2\,x_1dx_1dx_2\right)^{\frac{1}{2}}\\
&\leq C\,\left(\int_0^1\big(\int_0^1 P\,dx_2\big)\cdot\big(\int_0^1\rho u_1^2\,dx_2\big)\cdot\varphi\,dx_1\right)^{\frac{1}{2}}.
\end{split}
\end{equation}
Substituting \eqref{CC320} into \eqref{CC319} also leads to
\begin{equation}\label{CC322}
\begin{split}
\int_A\rho u_1^2\,dx_1dx_2+\int_0^1\big(\int_0^1\rho &u_1^2\,dx_2\big)\cdot\big(\int_0^1 P\,dx_2\big)\cdot\varphi\,dx_1\leq C.\\ 
\end{split}
\end{equation}
Thus in view of \eqref{CC313} and \eqref{CC322}, we derive that for any $x_1\in[1/2,1]$
\begin{equation*}
\begin{split}
\bigg|\int_0^1&u_2(x_1,x_2)\,dx_2\bigg|\leq C, 
\end{split}
\end{equation*}
which provides \eqref{CC310}. Moreover, \eqref{CC305} together with \eqref{CC322} gives
\begin{equation}\label{CC323}
\int_\Omega\big(\rho u_1^2+P\big)\, x_1^{\varepsilon_0} \,dx_1dx_2\leq C.
\end{equation}
Finally, we take advantage of \eqref{CC305} and \eqref{CC310} to argue that
\begin{equation*}
\begin{split}
&\int_{1/2}^1\int_0^1u_2^2 \,dx_1dx_2\\
&\leq C\int_{1/2}^1\int_0^1\big(u_2-\int_0^1 u_2\,dx_2\big)^2\,dx_1dx_2+C\int_{1/2}^1\left(\int_0^1u_2\,dx_2\right)^2\, \,dx_1\\
&\leq C\int_\Omega|\partial_2 u_2|^2x_1dx_1dx_2+C\int_{1/2}^1\left(\int_0^1 u_2\,dx_2\right)^2 \,dx_1\leq C,
\end{split}
\end{equation*}
which along with \eqref{CC305} and \eqref{CC323}  yields \eqref{CC311}. The proof is completed.
\end{proof}

After obtaining crucial estimates \eqref{CC311}, we can follow Subsections \ref{SS31} and \ref{SS32} to finish Lemma \ref{L32}.

\textit{Proof of Lemma \ref{L32}.} The approaches adopted in Subsection \ref{SS31} are directly available at the present stage, once we replace the weight $x_1^{-m}$ in all test functions by the truncations $\varphi\cdot\phi_k(x_1)$ given by \eqref{CC304} and \eqref{CC307}. Therefore, we declare the following estimates away from the axis with $\theta=11/30$.
\begin{equation}\label{CC324}
\int_\Omega\big(\rho^{1+\theta}+\varepsilon\rho^{\alpha+\theta}+\rho|u|^2\big)\,\varphi\cdot\phi_k\,dx_1dx_2\leq C(k),
\end{equation}
for some constant $C(k)$ determined by $k$. Meanwhile, the near side estimates in Subsection \ref{SS32} are valid in $(0,1)\times\mathbb{T}$ as well, thus we deduce that
\begin{equation}\label{CC325}
\int_\Omega\rho|u|^2\,x_1\cdot\phi_{1/2}\,dx_1dx_2\leq C.
\end{equation}
Combining \eqref{CC324} and \eqref{CC325} leads to
\begin{equation}\label{CC326}
\int_\Omega\rho|u|^2\,x_1\cdot\phi_k\,dx_1dx_2\leq C(k).
\end{equation}
In addition, \eqref{C352} also gives
\begin{equation*}
\int_{[0,1/2]\times\mathbb{T}} (\rho^{1+\theta}+\varepsilon\rho^{\alpha+\theta})\cdot x_1^6\,dx_1dx_2\leq C\left(\int_\Omega\rho|u|^2\cdot\phi_{3/4}\,dx_1dx_2\right)^4+C\leq C,
\end{equation*}
which along with \eqref{CC324} and \eqref{CC326} provides \eqref{CC303} and finishes the proof.
\thatsall

\section{Construction of non-trivial weak solutions }

\quad In this section, we make use of a priori estimates in Section \ref{S3} to construct non-trivial weak solutions to the system \eqref{C102} in $(0,1)\times\mathbb{T}$ and $(0,\infty)\times\mathbb{T}$.

\subsection{Local compactness assertions--the proof of Theorem \ref{TT1}.}\label{S4}
\quad First, let us prove Lemma \ref{T1} which ensures that the approximation solutions of the problem \eqref{C113}--\eqref{CC101} in $(0,\infty)\times\mathbb{T}$ converging to a weak solution of the problem \eqref{C102}--\eqref{C105}. The corresponding arguments on $(0,1)\times\mathbb{T}$ follow by the same method, so we omit the proof.

\textit{Proof of Lemma \ref{T1}.}
Let $\{(\rho^\varepsilon,u_1^\varepsilon,u_2^\varepsilon) \}_{\varepsilon>0}$ be a sequence of solutions to the system \eqref{C113} in $(0,\infty)\times\mathbb{T}$ under conditions \eqref{C114}--\eqref{CC101}, which is guaranteed by Lemma \ref{L21} and satisfies \eqref{C203}--\eqref{C111}. We also introduce the domains $$A_n=\left\{(x_1,x_2)\in\Omega\big|\,\frac{1}{n}\leq x_1\leq n\right\}.$$

By applying estimates \eqref{C311}, \eqref{C342}, \eqref{C344}, \eqref{C351}, and \eqref{C352} to $(\rho^\varepsilon,
u^\varepsilon_1,u^\varepsilon_2)$, we derive that $\forall n\in\mathbb{N}^+$ and $P^\varepsilon=\rho^\varepsilon+\varepsilon(\rho^\varepsilon)^\alpha$, there is a constant $C_n$ depending only on $n$, such that
\begin{equation}\label{C402}
\begin{split}
\|P^\varepsilon\cdot(\rho^\varepsilon)^\alpha\|_{L^1(A_n)}+\|u^\varepsilon\|_{H^1(A_n)}\leq C_n.
\end{split}
\end{equation}

Thus, Sobolev embedding theorem together with the Cantor diagonal argument ensures that, we can define $(\rho,u_1,u_2)$ on $\Omega$, such that for any $n\in\mathbb{N}^+$ and $q=4(1+\theta)/\theta$, it holds that
\begin{equation}\label{C403}
\begin{split}
&\quad\|\rho^{1+\theta}\|_{L^1(A_n)}+\|u\|_{H^1(A_n)}\leq C_n,\\
&\quad P^\varepsilon\cdot x_1\rightharpoonup\rho\cdot x_1\ \mbox{in $L^{1+\theta/\alpha}\big(A_n\big)$,}\\
&\quad\rho^\varepsilon\cdot x_1\rightharpoonup\rho\cdot x_1\ \mbox{in $L^{1+\theta}\big(A_n \big)$,}\\
&u^\varepsilon \rightharpoonup u\ \mbox{in $H^{1}\big(A_n\big)$,}~~u^\varepsilon \rightarrow u\ \mbox{in $L^{q}\big(A_n\big)$.}\\
\end{split}
\end{equation}
Combining \eqref{C403}$_3$ and \eqref{C403}$_4$, we argue that
\begin{equation}\label{C404}
\rho^\varepsilon u^\varepsilon_i u^\varepsilon_j\rightharpoonup\rho\,u_i u_j\ \mbox{in $L^r(A_n)$ with $r=\frac{2+2\theta}{2+\theta}$.}
\end{equation}

We still need to investigate the system near the symmetric axis. Let us define the domain $A_0=(0,1)\times\mathbb{T}$. In view of \eqref{C303} and \eqref{C354}, we derive that
\begin{equation}\label{C405}
\begin{split}
&\int_{\Omega}\left(|\nabla u^\varepsilon|^2 x_1+\frac{|u_1^\varepsilon|^2}{x_1}\right)dx_1dx_2\leq C,\\
&\int_{A_0}\left(P^\varepsilon\, x_1^{\frac{1}{2}}+P^\varepsilon\cdot(\rho^\varepsilon)^{\theta}\, x_1^{10} \right)dx_1dx_2\leq C,\\
\end{split}
\end{equation}
which in particular implies that, for $\kappa=\theta/19$ and $\gamma_1=18/19$
\begin{equation}\label{C406}
\begin{split}
\int_{A_0}(\rho^\varepsilon)^{1+\kappa}x_1\,dx_1dx_2&=\int_{A_0}\big(\rho^\varepsilon\cdot x_1^{1/2}\big)^{\gamma_1}\cdot\big((\rho^\varepsilon)^{1+\theta}\, x_1^{10}\big)^{1-\gamma_1}\,dx_1dx_2\leq C,\\
\varepsilon\int_{A_0}(\rho^\varepsilon)^{\alpha+\kappa}x_1\,dx_1dx_2&=\varepsilon\int_{A_0}\big((\rho^\varepsilon)^{\alpha}\,x_1^{1/2}\big)^{\gamma_1} \big((\rho^\varepsilon)^{\alpha+\theta}\,x_1^{10}\big)^{1-\gamma_1}\,dx_1dx_2\leq C.\\
\end{split}
\end{equation}
Combining \eqref{C403}, \eqref{C405} and \eqref{C406} leads to
\begin{equation}\label{C407}
\begin{split}
&P^\varepsilon\cdot x_1\rightharpoonup\rho\cdot x_1\ \mbox{in $L^{1+\kappa/\alpha}\big(A_0\big)$,}\\
\nabla u^\varepsilon\rightharpoonup&\nabla u,~~ 
 u_1^\varepsilon/x_1\rightharpoonup u_1/x_1\ \mbox{both in $\mathcal{L}^{2}\big(\Omega\big)$.}
\end{split}
\end{equation}
In addition, according to \eqref{C379}, we choose $\delta<1/100$ and deduce that
\begin{equation*} 
\begin{split}
&\int_{A_0}\rho^\varepsilon|u^\varepsilon|^2\cdot x_1^{1-\delta}\,dx_1dx_2\\
&\leq\int_\Omega\big((\rho^\varepsilon)^{1+\theta}x_1^{m}\zeta_0\big)^{\alpha_1}\,\big((\rho^\varepsilon)^{\beta}|u^\varepsilon|^{2\beta+2}\cdot x_1^{5/4}\,\zeta_0\big)^{\alpha_2}\,\big(\rho^\varepsilon\cdot x_1^{\varepsilon_0} \,\zeta_0\big)^{\alpha_3}dx_1dx_2,\\
\end{split}
\end{equation*}
where $\alpha_i$, $m$ and $\varepsilon$ are determined by \eqref{C380}. Then let us repeat the process given in \eqref{C369}--\eqref{C377} and apply Lemma \ref{L31} to infer that
\begin{equation}\label{C408}
\int_{A_0}\rho^\varepsilon|u^\varepsilon|^2\cdot x_1^{1-\delta}\,dx_1dx_2\leq C.
\end{equation}

Combining \eqref{C404} and \eqref{C408}, we next illustrate that
\begin{equation}\label{C409}\\
\rho^\varepsilon u^\varepsilon_i u^\varepsilon_j\cdot x_1\rightharpoonup\rho\,u_i u_j\cdot x_1\ \mbox{in $L^1(A_0)$.}
\end{equation}
In fact, for any $\epsilon>0$, we can find some $K, N\in\mathbb{N}^+$, such that
\begin{equation*}
\begin{split}
&\int_{A_0\cap\{\rho^\varepsilon|u^\varepsilon|^2\geq K\}}\rho^\varepsilon|u^\varepsilon|^2\cdot x_1\,dx_1dx_2\\
&\leq\int_{\{x_1<N^{-1}\}\cap\{\rho^\varepsilon|u^\varepsilon|^2\geq K\}}\rho^\varepsilon|u^\varepsilon|^2\cdot x_1\,dx_1dx_2+\int_{A_N\cap\{\rho^\varepsilon|u^\varepsilon|^2\geq K\}}\rho^\varepsilon|u^\varepsilon|^2\,dx_1dx_2\\
&\leq N^{-\delta}\int_{A_0}\rho^\varepsilon|u^\varepsilon|^2\cdot x_1^{1-\delta}\,dx_1dx_2
+K^{-\theta/(2+\theta)}\big\|\rho^\varepsilon|u^\varepsilon|^2\big\|_{L^{r}(A_N)}\\
&\leq CN^{-\delta}+C(N)\,K^{-\theta/(2+\theta)}.
\end{split}
\end{equation*}
where the last line is due to \eqref{C402}, \eqref{C404}, and \eqref{C408}. Thus, we first set $N=(C/\epsilon)^{1/\delta}$ and then choose $K=\big(C(N)/\epsilon\big)^{(2+\theta)/\theta}$ to declare that
$$\int_{A_0\cap\{\rho^\varepsilon|u^\varepsilon|^2\geq K\}}\rho^\varepsilon|u^\varepsilon|^2\cdot x_1\,dx_1dx_2\leq 2\epsilon,$$
which implies that the sequence $\big\{\rho^\varepsilon u^\varepsilon_i u^\varepsilon_j\,x_1\big\}_{\varepsilon>0}$ is uniformly integrable on $A_0$. Consequently, we can find some $f_{ij}\in L^1(A_0)$, such that
\begin{equation*}
\rho^\varepsilon u^\varepsilon_i u^\varepsilon_j\cdot x_1\rightharpoonup f_{ij}\ \mbox{in $L^1(A_0)$.}
\end{equation*}
However, in view of \eqref{C404}, we must have $f_{ij}=\rho\,u_i u_j\,x_1$, which implies \eqref{C409}.

Now, let us select the test function $\varphi\in \big(C_0^\infty(\Omega)\big)^2$ satisfying $$\varphi_1\big|_{x_1=0}=\partial_1\varphi_2\big|_{x_1=0}=0,$$
and substitute it into \eqref{C109} to declare that
\begin{equation}\label{C410}
\begin{split}
&\int_{\Omega}\big(\rho^\varepsilon u_i^\varepsilon u_j^\varepsilon\cdot\partial_i\varphi_j+P^\varepsilon\,\big(\frac{\varphi_1}{x_1}+\mathrm{div}\varphi\big)+g\cdot\varphi\big)x_1dx_1dx_2\\
&=\int_\Omega\big(\nabla u^\varepsilon\cdot\nabla\varphi+(u_1^\varepsilon /x_1)\cdot(\varphi_1/x_1)\big)\,x_1dx_1dx_2.
\end{split}
\end{equation}
Observe that $\varphi_1|_{x_1=0}=0$ ensures that $(\varphi_1/x_1)\in L^\infty(A_0)$, thus 
we apply \eqref{C403}, \eqref{C404}, \eqref{C407}  together with \eqref{C409} and pass to the limit in \eqref{C410} to derive that
\begin{equation}\label{C411}
\begin{split}
&\int_{\Omega}\big(\rho u_i u_j \cdot\partial_i\varphi_j+\rho\,\big(\frac{\varphi_1}{x_1}+\mathrm{div}\varphi\big)+g\cdot\varphi\big)x_1dx_1dx_2\\
&=\int_\Omega\big(\nabla u\cdot\nabla\varphi+(u_1 /x_1)\cdot(\varphi_1/x_1)\big)\,x_1dx_1dx_2.
\end{split}
\end{equation}
Similar arguments also implies that $\forall\phi\in C_0^\infty(\Omega)$, 
\begin{equation}\label{C412}
\int_{\Omega}\rho u\,\nabla\phi\cdot x_1dx_1dx_2=0.
\end{equation}

Next, observe that \eqref{C403} together with \eqref{C407} guarantees that for any $N>0$,
\begin{equation}\label{C413}
\rho^\varepsilon\cdot x_1\rightharpoonup\rho\cdot x_1\ \mbox{in $L^1([0,N]\times\mathbb{T})$.}
\end{equation} 
Fixing $m\in\mathbb{N}^+$, we consider the truncation function $\zeta_m(x_1)=\zeta_0(x_1/m)$ where $\zeta_0$ is given by \eqref{C382}. In view of \eqref{CC503}, we have
$$\int_\Omega\rho^\varepsilon\cdot x_1\,\zeta_n\,dx_1dx_2\leq C_m,$$
for some constant $C_m$ depending on $n$, which combined with \eqref{C413} leads to
\begin{equation}\label{C414}
\int_\Omega\rho\cdot x_1\,\zeta_m\,dx_1dx_2=\lim_{\varepsilon\rightarrow 0}\int_\Omega\rho^\varepsilon\cdot x_1\,\zeta_m\,dx_1dx_2\leq C_m.
\end{equation}
Consequently, we collect \eqref{C404}, \eqref{C407}, \eqref{C409}, and \eqref{C414} to derive that, for $i=1,2,$
\begin{equation}\label{C416}
\begin{split}
\rho\in \mathcal{L}^1_{loc}(\Omega),~~
\rho|u_i|^2\in \mathcal{L}^1_{loc}(\Omega),~~  \nabla u_i\in \mathcal{L}^2(\Omega),\\
\end{split}
\end{equation}
and the regularity condition \eqref{C106} holds.
Combining \eqref{C411}, \eqref{C412}, \eqref{C414}, and \eqref{C416} implies that $(\rho,u_1,u_2)$ is a weak solution to the problem \eqref{C102}--\eqref{C105}, moreover \eqref{C130} follows directly form \eqref{C316}, \eqref{C342}, \eqref{C343}, and \eqref{C351}. Therefore the proof of Lemma \ref{T1} is completed.
\thatsall

As a direct corollary of local compactness assertions in Lemma \ref{T1}, we prove Theorem \ref{TT1} and construct a weak solution to the system \eqref{C102} under conditions \eqref{CC105}.

\textit{Proof of Theorem \ref{TT1}.} Let $\{(\rho^\varepsilon,u_1^\varepsilon,u_2^\varepsilon) \}_{\varepsilon>0}$ be a sequence of solutions to the system \eqref{C113} in $(0,1)\times\mathbb{T}$ under conditions \eqref{CC102}--\eqref{CC103}, which is provided by Lemma \ref{L21} and satisfies \eqref{C203}--\eqref{C111}.
 
Lemma \ref{T1} already ensures that we can find a subsequence of $\{(\rho^\varepsilon,u_1^\varepsilon,u_2^\varepsilon) \}_{\varepsilon>0}$ converging weakly to a weak solution $(\rho,u_1,u_2)$ of the system \eqref{C113} in $(0,1)\times\mathbb{T}$. Thus we need only to verify \eqref{C119}.

In fact, according to \eqref{CC309}, we derive that
\begin{equation*}
\int_\Omega P^\varepsilon\cdot x_1\,dx_1dx_2=M.
\end{equation*}
Therefore, by passing to the subsequence, we may assume that $\{P^\varepsilon\cdot x_1\}_{\varepsilon>0}$ converges $*$-weakly to some measure $d\mu$ on $\Omega$. Moreover, since the domain $(0,1)\times\mathbb{T}$ is compact, the constant function $\Psi\equiv 1$ is a proper test function, thus we declare that
\begin{equation}\label{CC401}
\int_\Omega d\mu=\int_\Omega\Psi\, d\mu=\lim_{\varepsilon\rightarrow 0}\int_\Omega \Psi\,(P^\varepsilon\cdot x_1)\,dx_1dx_2=M.
\end{equation}
However, on each compact set $A_\delta\triangleq[\delta,1-\delta]\times\mathbb{T}$ with $\delta<1/4$, the weak compactness assertions \eqref{C403} give that
\begin{equation*}
P^\varepsilon\cdot x_1\rightharpoonup\rho\cdot x_1\ \mbox{in $L^{r_0}\big(A_\delta\big)$, for some $r_0>1$,}
\end{equation*} 
which enforces $d\mu=\rho\cdot x_1$. Consequently, \eqref{CC401} implies that
\begin{equation*}
\int_\Omega\rho\,x_1\,dx_1dx_2=M,
\end{equation*} 
and \eqref{CC104} is valid. The proof is therefore completed.
\thatsall

\subsection{Global compactness assertions--the proof of Theorem \ref{T12}.}\label{S5}
\quad
Now we construct non-trivial weak solutions on $(0,\infty)\times\mathbb{T}$. We mention that  Lemma \ref{T1} is not strong enough to exclude the trivial case, because by virtue of  the concentration compactness theory due to Lions \cite{Lions}, the sequence $\{f_n\}_{n=1}^\infty$ on $\Omega=(0,\infty)\times\mathbb{T}$, which satisfies
$$\int_\Omega f_n \,dx_1dx_2= M,$$
may still vanish completely during the weak limit process. For instance, such phenomenon can occur  when we construct weak solutions to the 3D Cauchy problem by solving approximation systems on the ball $B_R=\{x\in\mathbb{R}^3\,|\,|x|\leq R\}$.

Supposing that the external force $g=0$, we consider the following approximation system on $B_R$, 
\begin{equation}\label{C417}
\begin{cases}
\mathrm{div}(\rho^Ru^R)=0\ \mathrm{in}\ B_R,\\
\mathrm{div}(\rho^Ru^R\otimes u^R)-\Delta u^R+\nabla\rho^R=0\ \mathrm{in}\ B_R,\\
\int_{B_R}\rho^R\,dx=M.
\end{cases}
\end{equation}
Direct computation yields that
$$\rho^R=\frac{3M}{4\pi R^3},~~u_R=0,$$
which gives the solution to \eqref{C417}. If $\rho^R$ is zero extended to $\mathbb{R}^3$, we derive 
\begin{equation*}
\rho^R\rightharpoonup 0 \mbox{ in $L^1(\mathbb{R}^3)$},~~u^R\rightarrow 0 \mbox{ in $H^1(\mathbb{R}^3)$}.
\end{equation*}
Note that although the total mass of $\rho^R$ is conserved for each $R$, the limiting solution $\rho$ may still vanish completely.

In contrast, if the external force is properly designed, we can avoid the trivial solution. For example, suppose that the external force is of potential type $g=\nabla\varphi$ where $\varphi$ is a positive smooth function supported on $B_1$ and satisfies
$$\int_{B_1}\varphi\,dx=M.$$
Then, we still consider the approximation system on $B_R$ with $R\geq 1$,
\begin{equation}\label{C418}
\begin{cases}
\mathrm{div}(\rho^Ru^R)=0\ \mathrm{in}\ B_R,\\
\mathrm{div}(\rho^Ru^R\otimes u^R)-\Delta u^R+\nabla\rho^R=\nabla\varphi\ \mathrm{in}\ B_R,\\
\int_{B_R}\rho^R\,dx=M.
\end{cases}
\end{equation}
Clearly, we check that
$$\rho^R=\varphi,~~u_R=0,$$
give a solution to \eqref{C418}. If $\rho^R$ is zero extended to $\mathbb{R}^3$, we derive  
\begin{equation*}
\rho^R\rightarrow\varphi \mbox{ in $L^1(\mathbb{R}^3)$},~~u^R\rightarrow 0 \mbox{ in $H^1(\mathbb{R}^3)$}.
\end{equation*}
Therefore the limiting solution is non-trivial in this case.
\begin{remark} 
The examples given above explain a part of the reason why we can not construct a solution on $(0,\infty)\times\mathbb{T}$, satisfying 
\begin{equation}\label{C419}
\int_\Omega\rho\,x_1dx_1dx_2=M,
\end{equation}
in Lemma \ref{L21}. It seems that the fast decay rate of $g$ at the far field is not sufficient to exclude the trivial solution. In order to obtain \eqref{C419},  the external force $g$ should provide some effects of concentration to avoid $\rho$ spreading out like \eqref{C417}.
\end{remark}
 
To get over the difficulties mentioned above, we must establish a type of concentration assertions.
Let us prove Theorem \ref{T12} and construct a non-trivial weak solution to the problem \eqref{C102}--\eqref{C105}.

\textit{Proof of Theorem \ref{T12}.}
Suppose that $\{(\rho^\varepsilon,u_1^\varepsilon,u_2^\varepsilon) \}_{\varepsilon>0}$ is a sequence of approximation solutions to the systems \eqref{C113}--\eqref{CC101} in $(0,\infty)\times\mathbb{T}$, which is provided by Lemma \ref{L21} and satisfies \eqref{C203}--\eqref{C111}.

Lemma \ref{T1} already guarantees that $\{(\rho^\varepsilon,u_1^\varepsilon,u_2^\varepsilon) \}_{\varepsilon>0}$ will converge weakly to a weak solution $(\rho,u_1,u_2)$ to the system \eqref{C113}--\eqref{C114}. We need to verify   \eqref{CA102} and \eqref{C119}.

By virtue of \eqref{CC503}, we have
$$\int_\Omega P^\varepsilon\,\omega dx_1dx_2=M,$$
while \eqref{C507} ensures that
$$\int_\Omega P^\varepsilon\cdot x_1^{-9/8} dx_1dx_2\leq C.$$

Thus for any $\delta>0$, we can find some $N>0$ such that
\begin{equation*}
\begin{split}
\int_{\{x_1\geq N\}}P^\varepsilon\,\omega dx_1dx_2
&=\int_{\{x_1\geq N\}}P^\varepsilon\cdot x_1^{-2}\,dx_1dx_2\\
&\leq N^{-7/8}\cdot\int_\Omega P^\varepsilon\cdot x_1^{-9/8}\,dx_1dx_2\\
&\leq CN^{-7/8}\leq\delta,
\end{split}
\end{equation*}
which provides \eqref{CA102}.
Moreover, by virtue of \eqref{CC503}, we have
\begin{equation}\label{C552}
\bigg|\int_{\{x_1\leq N\}}P^\varepsilon\,\omega dx_1dx_2-M\bigg|\leq\delta.
\end{equation}
Meanwhile, in view of the definition \eqref{C116},  we apply the weak compactness assertions \eqref{C403} and \eqref{C407} on $[0,N]\times\mathbb{T}$ and let $\varepsilon\rightarrow 0$ in \eqref{C552} to deduce that
\begin{equation*}
\bigg|\int_{\{x_1\leq N\}}\rho\,\omega dx_1dx_2-M\bigg|\leq\delta.
\end{equation*}
Note that $\delta>0$ is arbitrarily determined, thus the monotonic convergence theorem guarantees that
$$\int_\Omega\rho\,\omega\,dx_1dx_2=M.$$
We therefore finish \eqref{C119} which ensures that the solution is non-trivial. The proof of  Theorem \ref{T12} is therefore completed.
\thatsall




\begin{thebibliography}{99}
\bibitem{BJ}
D. Bresch, P. Jabin,
Global existence of weak solutions for compressible Navier–Stokes equations: Thermodynamically unstable pressure and anisotropic viscous stress tensor, 
Ann. Math. 
\textbf{188} (2018),
577–684.

\bibitem{BN}
J. B$\check{\mathrm{r}}$ezina, A. Novotn\'{y}, 
On weak solutions of steady Navier-Stokes equations for monoatomic gas,
Comment. Math. Univ. Carolinae.,
\textbf{49}(4), (2008),
611-632.



\bibitem{Evans}
L. C. Evans, 
\textit{Partial Differential Equations}
American Mathmatical Society, Providence, Rhode Island, 2010.

\bibitem{GT}
D. Gilbarg, N.S. Trudinger, 
\textit{Elliptic partial differential equations of second order}.
Springer, 2015.

\bibitem{FNP}
E.  Feireisl, A. Novotn\'{y}, H. Petzeltov\'{a}, 
On the existence of globally defined weak solutions to the Navier-Stokes equations, 
J. Math. Fluid Mech., 
{\bf 3} (2001),   
358-392.

\bibitem{FGS}
J. Frehse, S. Goj, M. Steinhauer, 
$L^p$ -estimates for the Navier-Stokes equations for steady compressible flow, 
Manuscripta Math., 
\textbf{116} (2005), 
265–275.

\bibitem{FSW10}
J. Frehse, M. Steinhauer, W. Weigant,
On stationary solutions for 2-D viscous compressible isothermal Navier-Stokes equations, 
J. Math. Fluid Mech. 
\textbf{13} (2010), 
55–63.

 

\bibitem{Hoff92}
D. Hoff, 
Spherically symmetric solutions of the Navier-Stokes equations for compressible, isotheramal flow with large discontinuous initial data,
Indiana Univ. Math. J.,
\textbf{41} (1992), 
1225-1302.

\bibitem{Hu}
X. P. Hu,
Weak solutions for compressible isentropic Navier-Stokes equations in dimensions three,
Arch. Rational Mech. Anal. 
\textbf{242} (2021),
1907–1945.

\bibitem{JN}
D. Jessl\'{e}, A. Novotn\'{y}, 
Existence of renormalized weak solutions to the steady equations describing compressible fluids in barotropic regime, 
J. Math. Pures Appl., 
\textbf{99} (2013), 
280–296.

\bibitem{JZ01}
S. Jiang and P. Zhang, 
On spherically symmetric solutions of the compressible isentropic Navier-Stokes equations,
Commun. Math. Phys. 
\textbf{215}(3) (2001),
559-581.

\bibitem{JZ03}
S. Jiang, P. Zhang,
Axisymmetric solutions of the 3D Navier-Stokes equations for compressible isentropic fluids,
J. Math. Pures Appl,
\textbf{82} (2003), 949-973.

\bibitem{JZ}
S. Jiang, C. Zhou, Existence of weak solutions to the three-dimensional steady compressible Navier-Stokes equations, 
Ann. I. H., Anal. Nonlin.,
\textbf{28} (2011), 485-498.

\bibitem{L2}
P. L. Lions, \textit{Mathematical Topics in Fluid Mechanics} \textbf{Vol. 2}, Oxford University Press, New York, 1998.

\bibitem{Lions}
P. L. Lions, The concentration-compactness principle in the Calculus of Variations. The locally compact case, Part 1,
Ann. I. H., Anal. Nonlin.,
\textbf{1} (1984), 223-283.

\bibitem{NN}
S. Novo, A. Novotn\'{y},
On the existence of weak solutions to steady compressible Navier-Stokes equations when the density is not square integrable, 
J. Math. Kyoto Univ. 
\textbf{42}(3) 2002,
531–550.

\bibitem{PS05}
P. I. Plotnikov, J. Sokolowski,
Concentrations of solutions to time-discretizied compressible Navier-Stokes equations,
Commun. Math. Phy.,
\textbf{258} (2005), 567-608.

\bibitem{PSJF}
P. I. Plotnikov, J. Sokolowski, 
On compactness, domain dependence and existence of steady state solutions to compressible isothermal Navier-Stokes equations., 
J. Math. Fluid Mech.,
\textbf{7} (2005),
529–573.

\bibitem{PW}
P. I. Plotnikov, W. Weigant,
Isothermal Navier-Stokes equations and Radon transform,
SIAM. J. Math. Anal.,
\textbf{47} (2015), 626-653.

\bibitem{PWJMPA}
P. I. Plotnikov, W. Weigant,
Steady 3D viscous compressible flows with adiabatic exponent $\gamma\in(1,\infty)$,
J. Math. Pures Appl., 
\textbf{104} (2015), 
58–82.

\bibitem{RS93}
G. Raugel, G. R. Sell, Navier-Stokes equations on thin 3D domains. I: Global attractors and global regularity of solutions,
J. Am. Math. Soc.,
\textbf{6}(3) (1993), 503-568.

\bibitem{SW}
E. M. Stein, G.Weiss, \textit{Introduction to Fourier analysis on Euclidean space}, Princeton university press, 1971. 



\end{thebibliography}
\end{document}